\documentclass[11pt,reqno]{amsart}

\usepackage[a4paper,hmargin=24mm,top=19mm,bottom=20mm]{geometry}
\usepackage{amsmath,amssymb,amsthm,mathtools}
\usepackage{enumitem}
\usepackage{microtype}
\usepackage{xcolor}
\usepackage{tikz-cd}
\usepackage[colorlinks=true,linkcolor=black,
            citecolor=black,urlcolor=black]{hyperref}

\newtheorem{theorem}{Theorem}[section]
\newtheorem{lemma}[theorem]{Lemma}
\newtheorem{corollary}[theorem]{Corollary}
\newtheorem{proposition}[theorem]{Proposition}
\theoremstyle{definition}
\newtheorem{definition}[theorem]{Definition}
\newtheorem{example}[theorem]{Example}
\theoremstyle{remark}
\newtheorem{remark}[theorem]{Remark}

\newcommand{\modu}{\operatorname{mod}}
\newcommand{\Fac}{\operatorname{Fac}}
\newcommand{\addc}{\operatorname{add}}
\newcommand{\Ext}{\operatorname{Ext}}
\newcommand{\Hom}{\operatorname{Hom}}
\newcommand{\Coker}{\operatorname{Coker}}
\newcommand{\stau}{\mathrm{s}\tau\text{-}\mathrm{tilt}}
\newcommand{\JassoRelativeCitation}{%
  \cite[Proposition~2.1(e)]{Jasso}}

\numberwithin{equation}{section}

\title[Support $\tau$-tilting modules over Morita contexts]
{Support $\tau$-tilting modules over Morita context algebras: A bilateral approximation approach}
\author[Yingying Zhang]{Yingying Zhang}
\address{Yingying Zhang, Department of Mathematics, Huzhou Normal University,
Huzhou 313000, Zhejiang Province, P.~R.~China}
\email{yyzhang@huznu.edu.cn}
\date{}

\hypersetup{
  pdftitle={Support tau-tilting modules over Morita context algebras: A bilateral approximation approach},
  pdfauthor={Yingying Zhang},
  pdfsubject={Support tau-tilting theory and Morita context algebras},
  pdfkeywords={support tau-tilting module, approximation gluing, Morita context algebra, torsion class}
}

\begin{document}

\begin{abstract}
Let $k$ be a field and let
$\Lambda=\left(\begin{smallmatrix}A&N\\M&B\end{smallmatrix}\right)_{\phi,\psi}$
be a finite-dimensional Morita context algebra.  We introduce a bilateral
approximation construction which glues support $\tau$-tilting modules over
$A$ and $B$ by alternately correcting the two corner components through
minimal approximations and pushouts.  When this process terminates, it yields
a support $\tau$-tilting $\Lambda$-module with the prescribed componentwise
torsion class.  The one-sided case recovers Zhang's triangular-matrix
construction, while the two-sided compatibility conditions give direct
corner induction and, for radical-valued connecting maps, are also necessary,
extending the Gao--Huang criterion.  Examples show that the bilateral
correction process can terminate even when neither one-sided compatibility
condition is satisfied, while in other examples the process never terminates.
\end{abstract}

\subjclass[2020]{16G10, 16E30, 16S50}
\keywords{support $\tau$-tilting module, approximation, Morita context
algebra, torsion class}

\maketitle
\enlargethispage{2pt}

\section{Introduction}

Support $\tau$-tilting theory identifies support $\tau$-tilting
modules with functorially finite torsion classes
\cite[Theorem~2.7]{AIR}; see also \cite[Theorem~2.11]{Jasso}.
Constructing a support $\tau$-tilting module with a prescribed torsion class
therefore amounts to proving functorial finiteness and identifying the
Ext-projective generator.  A $\tau$-rigid generator alone need not be
support $\tau$-tilting; see Example~\ref{ex:completion-term}.

For triangular matrix algebras, Gao and Huang characterized support
$\tau$-tilting modules obtained by direct corner induction
\cite{GaoHuang}.  Peng, Ma and Huang subsequently obtained related
construction and converse results \cite{PengMaHuang}.  Zhang identified the
corresponding glued module explicitly by replacing the direct cross term with
a minimal left approximation in a corner torsion class
\cite[Theorem~5.1]{Zhang}.  Related induction constructions for trivial
extensions were given by Li and Zhang \cite{LiZhang}.  For Morita rings with
zero connecting maps, Di--Tang--Yang studied weak silting modules
\cite{DiTangYang}, while Asefa--Xu characterized a directly induced silting
module under the stronger zero-product assumptions
$M\otimes_AN=0=N\otimes_BM$ \cite[Theorem~3.4]{AsefaXu}.

Zhang's theorem is obtained from a symmetric ladder of recollements.
Related categorical gluing results through good recollements were developed
by Wang, Luo, Liu, He and Wu \cite{WangLuoLiuHeWu}.  General Morita context
algebras need not carry the ladder used in the triangular case, which is why
the pushout corrections below are needed.

The purpose of this paper is to extend both Zhang's approximation construction
and the Gao--Huang direct-induction criterion from triangular matrix algebras
to Morita context algebras.  The two structure maps of a Morita context module
are coupled by the connecting maps $\phi$ and $\psi$; independently chosen
corner approximations therefore need not define a module.  We resolve this
problem by two pushout corrections and arrange the results in the chain
\[
 \text{bilateral correction}
 \ \Longrightarrow\ \text{one-sided correction}
 \ \Longrightarrow\ \text{direct corner induction}.
\]

Throughout, $k$ is a field, all algebras are finite-dimensional
$k$-algebras, and $\modu C$ denotes the category of finitely generated
right $C$-modules.  The algebras themselves are not assumed to be basic.
Subcategories are full and closed under isomorphisms.
The $\tau$-rigid and support $\tau$-tilting modules and
pairs are represented by basic objects;
the corner input modules denoted by $X$ and $Y$ are chosen in the same way.
Whenever a displayed direct sum represents one of these objects, repeated
isomorphic indecomposable summands are deleted without further comment.
Accordingly, in any statement $L\cong P(\mathcal T)$, a constructed module
$L$ is understood through its basic representative.  The convention does not
apply to objects occurring in approximation maps, exact sequences, or pushout
diagrams; their actual multiplicities are retained.

We briefly introduce the notation needed to state the main results.  Let
\[
 \Lambda=\begin{pmatrix}A&N\\M&B\end{pmatrix}_{\phi,\psi}
\]
be a finite-dimensional Morita context algebra, with diagonal idempotents
$e,f$, write $D=\Hom_k(-,k)$, and put
\[
 F_A=-\otimes_Ae\Lambda,
 \qquad F_B=-\otimes_Bf\Lambda.
\]
For $X\in\stau A$ and $Y\in\stau B$, set
\[
 \mathcal X=\Fac_AX,\qquad \mathcal Y=\Fac_BY,\qquad
 \mathcal T_{X,Y}=\{W\in\modu\Lambda\mid We\in\mathcal X,
                                      \ Wf\in\mathcal Y\}.
\]
For a functorially finite torsion class $\mathcal T$, write $P(\mathcal T)$
for its Ext-projective generator.
Section~3 constructs two alternating correction ladders starting from
$F_AX$ and $F_BY$.  Each step replaces one corner by a minimal approximation
and uses a pushout to modify the opposite corner so that the two Morita
context identities remain valid.  The construction is bilateral because the
two ladders alternately correct both corners.  A ladder terminates when both
components belong to their prescribed torsion classes.  Its \emph{minimal
target} is obtained by removing the zero component from the composite map.

Our first main result treats arbitrary connecting maps.

\begin{theorem}[See Theorem~\ref{thm:bilateral}]
\label{thm:intro-bilateral}
Assume that both correction ladders terminate.  Let $K$ be obtained from the
direct sum of the minimal targets by deleting repeated isomorphic summands.
Then $K$ is $\tau$-rigid, $\Fac_\Lambda K=\mathcal T_{X,Y}$, and for any
left $\addc K$-approximation $c:\Lambda\to K'$, one has
\[
 K\oplus\Coker c
 \cong P(\mathcal T_{X,Y})
\]
which is support $\tau$-tilting.
\end{theorem}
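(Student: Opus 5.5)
The plan is to reduce everything to the standard criterion of Adachi--Iyama--Reiten and Jasso. If a $\tau$-rigid module $K$ satisfies $\Fac_\Lambda K=\mathcal T$ for a torsion class $\mathcal T$, then $\mathcal T$ is functorially finite. Moreover, for a left $\addc K$-approximation $c:\Lambda\to K'$, the module $K\oplus\Coker c$ is the Bongartz-type completion, i.e. it is isomorphic to $P(\mathcal T)$ and is support $\tau$-tilting (\cite[Theorem~2.7]{AIR}, \JassoRelativeCitation). So the final displayed isomorphism follows once I establish two facts: (i) $\mathcal T_{X,Y}$ is a torsion class with $\Fac_\Lambda K=\mathcal T_{X,Y}$, and (ii) $K$ is $\tau$-rigid, i.e. $\Hom_\Lambda(K,\tau K)=0$. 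By the AIR characterization this is equivalent to $K$ being Ext-projective in $\Fac K$. Hence I will prove the stronger statement that every summand of $K$ is Ext-projective in $\mathcal T_{X,Y}$.

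\emph{Step 1: $\mathcal T_{X,Y}$ is a torsion class.} The functors $-e$ and $-f$ are exact and commute with quotients and extensions. Since $\mathcal X$ and $\mathcal Y$ are torsion classes, $\mathcal T_{X,Y}$ is closed under factor modules and extensions.

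\emph{Step 2: containment $\Fac K\subseteq\mathcal T_{X,Y}$.} Termination of a ladder means precisely that its terminal module $W$ has $We\in\mathcal X$ and $Wf\in\mathcal Y$. The minimal target is a summand of $W$, so $K\in\mathcal T_{X,Y}$ and Step 1 gives the containment.

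\emph{Step 3: reverse containment.} Take $W\in\mathcal T_{X,Y}$. Since $We\in\Fac X$, there is an epimorphism $X^n\to We$. By adjunction ($F_A$ is left adjoint to $-e$) this gives a map $F_AX^n\to W$ whose image contains $We\cdot\Lambda$; similarly for $F_BY$. Hence $F_AX^m\oplus F_BY^n\to W$ is surjective. I then push this map along the ladder. At each step one corner of the current module is replaced by a minimal left approximation in $\mathcal X$ or $\mathcal Y$, and the other corner is changed by a pushout. Because $We\in\mathcal X$ and $Wf\in\mathcal Y$, the universal properties of the approximation and the pushout let the map to $W$ factor through each successive rung. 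At termination this yields a map from copies of the terminal modules to $W$, and surjectivity persists because the first rung already maps onto $W$. Discarding the zero component is harmless, so $W\in\Fac K$.

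\emph{Step 4: Ext-projectivity.} I expect this to be the main obstacle. I need $\Ext^1_\Lambda(K,W)=0$ for all $W\in\mathcal T_{X,Y}$. My first attempt is inductive along the ladder, using the adjunction isomorphism $\Ext^1_\Lambda(F_AX,W)\cong\Ext^1_A(X,We)$ modulo a $\operatorname{Tor}$ correction. At the start, $F_AX$ is projective-like relative to $W$: the isomorphism applies and $X$ is Ext-projective in $\mathcal X$. Each later rung fits into short exact sequences built from the approximation sequence $0\to U\to U'\to C\to0$ and the pushout square. For the approximation part, applying $\Hom_\Lambda(-,W)$ and using that maps from $U$ into $W$ factor through the minimal approximation (since $We\in\mathcal X$) shows that the connecting morphism kills the relevant $\Ext^1$ classes. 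The pushout step preserves Ext-vanishing because a pushout of an injective-on-$\Ext$ situation yields a long exact sequence in which the new term is sandwiched between terms already known to vanish. The delicate point is controlling the $\operatorname{Tor}_1$ terms and the non-split connecting maps $\phi,\psi$. I would handle these by working directly with extensions of Morita context modules, written as pairs of corner extensions compatible with the structure maps, and then checking that termination gives a splitting on each corner that lifts compatibly.
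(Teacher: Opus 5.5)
Your outline matches the paper's. Steps 1–3 are sound, and so is the passage from $\Ext^1_\Lambda(K,\Fac_\Lambda K)=0$ to $\tau$-rigidity and the completion. The gap is Step~4, which carries the real content: the mechanism you sketch would not establish it.

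\emph{Monomorphism assumption.} A left $\mathcal X$-approximation need not be a monomorphism, and neither need a ladder rung. In Example~\ref{ex:bilateral-finite} the $Y$-ladder uses $\mathbf 5^{\oplus2}\to0$. So there is no sequence $0\to U\to U'\to C\to0$ in general.

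\emph{Circularity.} Suppose a rung $\rho\colon E\to E'$ were injective with cokernel $C$. The factorization property and $\Ext^1_\Lambda(E,W)=0$ only give $\Ext^1_\Lambda(C,W)\cong\Ext^1_\Lambda(E',W)$. The vanishing you want is then equivalent to an unknown vanishing for $C$.

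\emph{Minimality is never used.} Your sketch uses only the factorization property of the approximations, never their left minimality, and without minimality the conclusion is false. For any $Z\in\mathcal T_{X,Y}$, the map $(\overline\alpha_X,0)\colon F_AX\to K_X\oplus Z$ is still a left $\mathcal T_{X,Y}$-approximation. Its target is not Ext-projective unless $Z$ is. So no argument via long exact sequences, $\operatorname{Tor}$ terms or componentwise splittings that ignores minimality can prove Step~4.

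\emph{The missing ingredient} is the Wakamatsu-type argument of Lemma~\ref{lem:Wakamatsu}.
\begin{enumerate}
 \item The factorization you already use in Step~3 is Lemma~\ref{lem:elementary-correction}: each rung is a minimal left $\mathcal C_{\mathcal X}$- or $\mathcal C_{\mathcal Y}$-approximation. Hence the terminal composite is a left $\mathcal T_{X,Y}$-approximation.
 \item Its minimal part $\overline\alpha_X\colon F_AX\to K_X$ is therefore a minimal left $\mathcal T_{X,Y}$-approximation with target in $\mathcal T_{X,Y}$.
 \item Take $0\to T\to C\xrightarrow{s}K_X\to0$ with $T\in\mathcal T_{X,Y}$. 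Extension closure gives $C\in\mathcal T_{X,Y}$.
 \item Since $\Ext^1_\Lambda(F_AX,T)=0$, $\overline\alpha_X$ lifts to $h$ with $sh=\overline\alpha_X$. Write $h=t\overline\alpha_X$; then $(st)\overline\alpha_X=\overline\alpha_X$.
 \item Minimality makes $st$ invertible, so the sequence splits.
\end{enumerate}
The same argument also works rung by rung, relative to the extension-closed classes $\mathcal C_{\mathcal X}$ and $\mathcal C_{\mathcal Y}$, both of which contain $\mathcal T_{X,Y}$.

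\emph{The base case} needs no $\operatorname{Tor}$ correction. Restrict $0\to W\to E\to F_AX\to0$ to the $e$-corner; it splits because $X$ is Ext-projective in $\mathcal X$. The adjunction $F_A\dashv(-)e$ then converts the corner section into a $\Lambda$-section (Lemma~\ref{lem:corner-Ext-projective}).

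\emph{The completion step.} What you cite is the co-Bongartz completion $P(\Fac_\Lambda K)$, not a Bongartz one. To apply it to an arbitrary left $\addc K$-approximation $c$, you need that $c$ is a left $\Fac_\Lambda K$-approximation, which holds because $\Lambda$ is projective. The paper proves this directly in Proposition~\ref{prop:module-coBongartz}.
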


Thus $\mathcal T_{X,Y}$ is functorially finite under a concrete termination
hypothesis.  Example~\ref{ex:bilateral-finite} shows that both one-sided
containments may fail although both ladders terminate.  In the other
direction, Proposition~\ref{prop:obstruction} gives a broad obstruction to
unconditional termination, while Example~\ref{ex:zero-products} shows that
the obstruction persists even when both cross tensor products vanish.

If $X\otimes_AN\in\mathcal Y$, the $X$-ladder is already terminal and
the $Y$-ladder needs at most one correction.  In this case, a relative
presentation argument shows that the terminal objects form the full
Ext-projective generator.

\begin{theorem}[See Theorem~\ref{thm:main}]
\label{thm:intro-one-step}
Assume that $X\otimes_AN\in\Fac_BY$, and let
\[
 p:Y\otimes_BM\longrightarrow X_Y
\]
be a minimal left $\Fac_AX$-approximation.  If $K_p$ is the explicit pushout
correction of $F_BY$ associated with $p$, then
\[
 G_{\mathcal X}(X,Y)=F_AX\oplus K_p
 \cong P(\mathcal T_{X,Y}).
\]
In particular, $G_{\mathcal X}(X,Y)$ is support $\tau$-tilting and
$\Fac_\Lambda G_{\mathcal X}(X,Y)=\mathcal T_{X,Y}$.
\end{theorem}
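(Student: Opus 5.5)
Under the hypothesis $X\otimes_AN\in\Fac_BY$, I would check that both objects are in $\mathcal T_{X,Y}$, show that they form a $\tau$-rigid generator of $\mathcal T_{X,Y}$, and then prove that every Ext-projective of $\mathcal T_{X,Y}$ lies in $\addc(F_AX\oplus K_p)$. By \cite[Theorem~2.7]{AIR}, this gives $P(\mathcal T_{X,Y})$.

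First, the components. The module $F_AX$ has components $(F_AX)e=X$ and $(F_AX)f=X\otimes_AN\in\mathcal Y$, so $F_AX\in\mathcal T_{X,Y}$ directly. This is the claim that the $X$-ladder is already terminal. The module $F_BY$ has components $Y\otimes_BM$ and $Y$. The correction $K_p$ replaces the $e$-component $Y\otimes_BM$ by $X_Y\in\mathcal X$. Its $f$-component is the pushout of $Y\leftarrow Y\otimes_BM\otimes_AN\to X_Y\otimes_AN$, where the left map is induced by $\psi$ and the right map is $p\otimes N$. This pushout is needed so that the two Morita identities hold. The pushout is a quotient of $Y\oplus X_Y\otimes_AN$. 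The summand $X_Y\otimes_AN$ lies in $\Fac_B(X\otimes_AN)\subseteq\mathcal Y$, and $\mathcal Y$ is closed under quotients and extensions. Hence $K_p f\in\mathcal Y$ and $K_p\in\mathcal T_{X,Y}$, with no further correction needed.

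Second, generation: I need $\Fac_\Lambda(F_AX\oplus K_p)=\mathcal T_{X,Y}$. For $W\in\mathcal T_{X,Y}$, take epimorphisms $X^m\to We$ and $Y^n\to Wf$. By adjunction these give maps $F_AX^m\to W$ and $F_BY^n\to W$ whose sum is surjective. The map $F_BY^n\to W$ restricts on the $e$-corner to $Y^n\otimes_BM\to We\in\mathcal X$. This restriction factors through $p^n$ because $p$ is a left $\mathcal X$-approximation, and by the pushout property the whole map then factors through $K_p^n$. So $W$ is a quotient of $(F_AX)^m\oplus K_p^n$.

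Third, $\tau$-rigidity, using $\Hom_\Lambda(U,\tau V)=0$ for $U,V$ in the sum, i.e. $V$ Ext-projective relative to $\Fac U$. By \JassoRelativeCitation{}, $\tau$-rigidity amounts to $\Ext^1_\Lambda(V,W)=0$ for all $W\in\Fac_\Lambda(F_AX\oplus K_p)=\mathcal T_{X,Y}$. I therefore aim to show that $F_AX$ and $K_p$ are Ext-projective in $\mathcal T_{X,Y}$. For $F_AX$, I take a projective presentation $P_1\to P_0\to X$ and apply $F_A$, which is right exact and sends projectives to projectives. I then compare $\Hom_\Lambda(F_AX,\tau W)$ with $\Hom_A(X,\tau_A(We))$ via $\tau$ and Hom-adjunction, and this vanishes since $We\in\Fac X$. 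For $K_p$, I use the exact sequence realising $K_p$ as the pushout of $F_BY\leftarrow F_A(Y\otimes_BM)\to F_AX_Y$, which is the relative presentation. Given an extension $0\to W\to E\to K_p\to0$, I pull it back to $F_BY$ and to $F_AX_Y$. The pullback to $F_AX_Y$ splits because $X_Y\in\addc X$ after taking the minimal version, so $F_AX_Y$ is Ext-projective. For the pullback to $F_BY$, Ext-projectivity of $Y$ in $\mathcal Y$ together with $Ef\in\mathcal Y$ yields lifts. Gluing these lifts by the pushout property splits the extension. This is the main obstacle, because the two lifts must be made compatible on $F_A(Y\otimes_BM)$. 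I would try to use the approximation property of $p$ to adjust the $F_BY$-lift by a map factoring through $p$. The resulting $\tau$-rigid generator is $\Ext$-projective, hence contained in $\addc P(\mathcal T_{X,Y})$; in particular $\mathcal T_{X,Y}$ is functorially finite.

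Finally, to see that no summand is missing, I count summands: $|F_AX\oplus K_p|=|X|+|Y|$ should equal $|P(\mathcal T_{X,Y})|=|\Lambda|-|$support deficit$|$. Alternatively, I would apply Theorem~\ref{thm:intro-bilateral}: $\Coker c$ of a left $\addc(F_AX\oplus K_p)$-approximation of $\Lambda$ is already in $\addc$. For this, compute the approximation of $e\Lambda=F_A(eA)$ and $f\Lambda=F_B(fB)$ by applying $F_A$ to the left $\addc X$-approximation of $A$ and $F_B$ to that of $B$, corrected via $p$. The cokernels are then $F_A$ and the pushout correction applied to the Bongartz-type complements, which lie in $\addc X$ and $\addc Y$ since $X$ and $Y$ are support $\tau$-tilting. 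Hence $K\oplus\Coker c\cong F_AX\oplus K_p$.
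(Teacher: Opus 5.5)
Your membership and generation steps agree with the paper. The proof breaks down at the completeness step, and there is also a false step in your Ext-projectivity argument for $K_p$.

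\textbf{Completeness.} The main gap is showing that no Ext-projective object of $\mathcal T_{X,Y}$ is missing from $\addc(F_AX\oplus K_p)$. Generation plus Ext-projectivity does not give this (compare Example~\ref{ex:completion-term}), and neither of your two routes establishes it.

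Your count $|F_AX\oplus K_p|=|X|+|Y|$ is false in general. Take the strict context $\Lambda=M_2(k)$ with $X=Y=k$. Then (C$_{\mathcal X}$) holds, $p$ is the identity, and $K_p=F_BY\cong F_AX$. So the basic module has one summand while $|X|+|Y|=2$. The ``support deficit'' is also never computed.

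Your co-Bongartz route misidentifies the cokernel. Let $c_B:B\to Y'$ be a left $\addc Y$-approximation, put $Y''=\Coker c_B$, and let $a_{p'}:F_BY'\to K_{p'}$ be the pushout correction. Then $\Coker(a_{p'}\circ F_Bc_B)$ is the pushout of $a_{p'}$ along $F_BY'\twoheadrightarrow F_BY''$. This is only a possibly non-minimal left $\mathcal T_{X,Y}$-approximation of $F_BY''$. Its surplus summand is Ext-projective, but showing that it lies in $\addc(F_AX\oplus K_p)$ is exactly what has to be proved. Concretely, take the quiver $1\to3\to2$ of Example~\ref{ex:nonzero} with $X=\genfrac{}{}{0pt}{}{\mathbf 1}{\mathbf 2}\oplus\mathbf 1$ and $Y=B$. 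There $Y''=0$, yet $\Coker a_p\cong F_A\mathbf 1\neq0$. Also $K_p\cong e_1\Lambda\oplus\mathbf 3$ has two summands although $|Y|=1$.

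The missing idea is the relative presentation of Proposition~\ref{prop:one-step-relative-presentations}. Every $W\in\mathcal T_{X,Y}$ admits an epimorphism $d:G_0\to W$ with $G_0\in\addc(F_AX\oplus K_p)$ and $\ker d\in\mathcal T_{X,Y}$. When $W$ is Ext-projective, $d$ therefore splits. Proving $(\ker d)f\in\mathcal Y$ uses (C$_{\mathcal X}$) a second time, through $X_Y^{\oplus n}\otimes_AN\in\mathcal Y$ and $X^{\oplus m}\otimes_AN\in\mathcal Y$. It also uses Lemma~\ref{lem:relative-covers} for the corner kernels.

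\textbf{Ext-projectivity of $K_p$.} The object $X_Y$ lies in $\Fac_AX$ but need not lie in $\addc X$. For example, if $Y\otimes_BM\in\Fac_AX$, then $p$ is the identity and $X_Y=Y\otimes_BM$ is an arbitrary object of $\mathcal X$. Hence $F_AX_Y$ need not be Ext-projective, and the pullback to $F_AX_Y$ need not split.

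The compatibility you flag as the main obstacle is resolved by left minimality, as in the relative Wakamatsu lemma. Given $0\to W\to E\xrightarrow{s}K_p\to0$:
\begin{enumerate}[label=\textup{(\roman*)},leftmargin=2.2em,nosep]
 \item lift $a_p$ to $h:F_BY\to E$, using $\Ext^1_\Lambda(F_BY,W)=0$;
 \item write $h=ta_p$, which is possible because $E\in\mathcal T_{X,Y}$;
 \item conclude from $sta_p=a_p$ and left minimality of $a_p$ that $st$ is an automorphism.
\end{enumerate}
Adjusting lifts through $p$ only yields $sta_p=a_p$; it is minimality that turns this into a splitting.

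For $F_AX$ and $F_BY$ themselves, it is simpler to split the restricted corner sequence and transport the section by adjunction. Your comparison of $\Hom_\Lambda(F_AX,\tau W)$ with $\Hom_A(X,\tau_A(We))$ goes in the wrong direction. What is needed is $\Hom_\Lambda(W,\tau_\Lambda F_AX)=0$. Even that is reachable only through projective presentations, not through an adjunction for $\tau$.
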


When $N=0$, one has $F_AX=(X,0)$ and $K_p=(X_Y,Y)_p$, so
Theorem~\ref{thm:intro-one-step} is precisely Zhang's triangular construction
\cite[Theorem~5.1]{Zhang}.  The opposite one-sided version is given in
Theorem~\ref{thm:symmetric}.

If the second cross term also belongs to its prescribed torsion class, the
minimal approximation in Theorem~\ref{thm:intro-one-step} is the identity, so
the pushout correction is just $F_BY$.  This immediately gives the sufficient
part of direct corner induction for arbitrary connecting maps.  For the
converse we use the radical condition
\[
 \operatorname{Im}\phi\subseteq J(A),
 \qquad
 \operatorname{Im}\psi\subseteq J(B).
\]

\begin{theorem}[See Theorem~\ref{thm:direct-characterization}]
\label{thm:intro-direct}
Assume the radical condition, and let $X\in\modu A$ and $Y\in\modu B$.
Then
\[
 F_AX\oplus F_BY\text{ is support $\tau$-tilting}
\]
if and only if
\begin{align*}
 X&\in\stau A, &Y&\in\stau B,\\
 Y\otimes_BM&\in\Fac_AX, &X\otimes_AN&\in\Fac_BY.
\end{align*}
In this case, if $(X,P_X)$ and $(Y,P_Y)$ are the corresponding support $\tau$-tilting
pairs, then the projective complement is $F_AP_X\oplus F_BP_Y$, and
\[
 \Fac_\Lambda(F_AX\oplus F_BY)=\mathcal T_{X,Y}.
\]
\end{theorem}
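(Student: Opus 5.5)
The plan is to prove the two directions separately. Sufficiency will come from Theorem~\ref{thm:intro-one-step}, and necessity will use the radical condition to pass to corners via the idempotents $e,f$.

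\emph{Sufficiency.} Assume $X\in\stau A$, $Y\in\stau B$, $Y\otimes_BM\in\Fac_AX$ and $X\otimes_AN\in\Fac_BY$.
\begin{enumerate}[label=(\roman*)]
\item Since $Y\otimes_BM$ already lies in $\mathcal X$, the minimal left $\mathcal X$-approximation $p$ is the identity.
\item For $p=\mathrm{id}$, the pushout correction $K_p$ is simply $F_BY$.
\item Theorem~\ref{thm:intro-one-step} then gives $F_AX\oplus F_BY\cong P(\mathcal T_{X,Y})$. Hence this module is support $\tau$-tilting with $\Fac_\Lambda=\mathcal T_{X,Y}$. No radical condition is used here.
\end{enumerate}

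\emph{Projective complement.} Put $P=F_AP_X\oplus F_BP_Y$; these are projective $\Lambda$-modules because $F_A(eA)=e\Lambda$ and $F_B(fB)=f\Lambda$.
\begin{enumerate}[label=(\roman*)]
\item Adjunction gives $\Hom_\Lambda(F_AP_X,W)\cong\Hom_A(P_X,We)$, and similarly for $F_BP_Y$.
\item For $W\in\mathcal T_{X,Y}$ we have $We\in\Fac_AX$, so $\Hom_A(P_X,We)=0$. The same argument on the $B$-side shows $\Hom_\Lambda(P,W)=0$.
\item Counting: $|F_AX\oplus F_BY|=|X|+|Y|$, since $F_AX$ and $F_BY$ share no indecomposable summands. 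Their tops are supported at $e$ and $f$ respectively, by the radical condition, or directly from the torsion class description. Adding $|P|=|P_X|+|P_Y|$ gives $|A|+|B|=|\Lambda|$.
\item Because the support $\tau$-tilting pair is determined by the module, $P$ is the projective complement.
\end{enumerate}

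\emph{Necessity.} Suppose $T=F_AX\oplus F_BY$ is support $\tau$-tilting.
\begin{enumerate}[label=(\roman*)]
\item Compute $(F_AX)e=X\otimes_AA\oplus$ (the image under $\phi$), i.e.\ $F_AX=(X,X\otimes_AN)$ with structure maps given by $\phi$. Likewise $F_BY=(Y\otimes_BM,Y)$.
\item The cross conditions follow from $\Fac_\Lambda T$ being a torsion class containing $F_BY$. Restricting along $e$ is exact and sends $\Fac_\Lambda T$ into $\Fac_A(X\oplus Y\otimes_BM)$. So the goal is $\Fac_\Lambda T\,e=\Fac_AX$ together with $Y\otimes_BM\in\Fac_AX$.
\item The radical condition enters here. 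Take the canonical map $F_A(Te)\to T$ and show that the degree-zero part of the $A$-component is generated by $X$ modulo the image of $\phi$-twisted terms, which lies in $\operatorname{rad}$. By Nakayama, the top of $Y\otimes_BM$ as an $A$-module then comes from $\Hom(X,-)$. Concretely, I would use $\tau$-rigidity: $\Hom_\Lambda(T,\tau_\Lambda T)=0$, computed via $\tau_\Lambda F_AX$ and a minimal projective presentation $F_A(P_1)\to F_A(P_0)$. Minimality of $F_A$ of a minimal presentation is exactly what the radical condition guarantees. This identifies $\Hom_\Lambda(F_BY,\tau F_AX)$ with $D\overline{\Hom}$-data forcing $\Hom_A(Y\otimes_BM,\tau_AX)=0$ and $X$-generation.
\item Corner $\tau$-rigidity of $X$ and $Y$: $\Hom_A(X,\tau_AX)$ embeds in $\Hom_\Lambda(F_AX,\tau_\Lambda F_AX)$ through the presentation argument just described.
\item Corner maximality: $|T|+|P|=|\Lambda|$, with $P$ decomposed into $e$- and $f$-parts, forces $|X|+|P_X|=|A|$ and $|Y|+|P_Y|=|B|$.
\end{enumerate}

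The main obstacle is step (iii) of the necessity argument. There one must show that $F_A$ preserves minimal projective presentations under $\operatorname{Im}\phi\subseteq J(A)$ and $\operatorname{Im}\psi\subseteq J(B)$, and then convert $\tau_\Lambda$-rigidity into the Ext-projective membership $Y\otimes_BM\in\Fac_AX$, using AIR's criterion $\Hom(U,\tau V)=0\iff\Ext^1(V,\Fac U)=0$.
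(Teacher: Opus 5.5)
Your sufficiency direction matches the paper's: take $p=1_{Y\otimes_BM}$, so $K_p\cong F_BY$, and apply Theorem~\ref{thm:main}. Your count under \textup{(R)} showing that $(\mathbb I(X,Y),F_AP_X\oplus F_BP_Y)$ is a support $\tau$-tilting pair is also valid; the paper instead identifies the complement inside the necessity proof.

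\textbf{The gap is in the necessity direction}, at the step where you derive $Y\otimes_BM\in\Fac_AX$ and $X\otimes_AN\in\Fac_BY$ from $\tau$-rigidity. With $\sigma_T=F_A\sigma_X\oplus F_B\sigma_Y$, $\tau$-rigidity of $T$ amounts to surjectivity of the four maps $\Hom_A(\sigma_X,X)$, $\Hom_A(\sigma_X,Y\otimes_BM)$, $\Hom_B(\sigma_Y,X\otimes_AN)$ and $\Hom_B(\sigma_Y,Y)$. That is, $X$ and $Y$ are $\tau$-rigid and $\Hom_A(Y\otimes_BM,\tau_AX)=0=\Hom_B(X\otimes_AN,\tau_BY)$ (Lemma~\ref{lem:tau-rigid-direct}). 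This is all that $\tau$-rigidity yields, and it does not force ``$X$-generation''. The AIR criterion you cite turns Hom-vanishing into Ext-vanishing, not into membership in $\Fac_AX$.

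For a concrete failure, let $\Lambda=\begin{pmatrix}k&0\\k&k\end{pmatrix}$, so $A=B=M=k$, $N=0$, and \textup{(R)} holds. Take $X=0$ and $Y=k$. Then $T=F_Bk=f\Lambda$ is projective, hence $\tau$-rigid, and $X$ and $Y$ are support $\tau$-tilting. Yet $Y\otimes_BM=k\notin\Fac_A0$. The Nakayama/top sketch in your step (iii) does not repair this.

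What is missing is the second half of \eqref{eq:Fac-orthogonality}: you need $\Hom_A(P_X,Y\otimes_BM)=0$. This comes from maximality, which is where \textup{(R)} actually enters. The paper proceeds as follows.
\begin{enumerate}[label=\textup{(\alph*)},leftmargin=2.2em]
 \item Let $Q_X$ be the basic sum of the indecomposable projectives $Q$ with $\Hom_A(Q,X)=0$. Then $|X|+|Q_X|\le|A|$ by Proposition~\ref{prop:AIR-criteria}\textup{(2)}. Your $P_X$ in step (v) is not yet defined at this point, since $X$ is not yet known to be support $\tau$-tilting.
 \item By \textup{(R)} and Lemma~\ref{lem:radical-context}\textup{(2)}, every indecomposable summand of the complement $P_T$ is some $F_AQ$ or $F_BQ'$.
 \item Adjunction gives $0=\Hom_\Lambda(F_AQ,T)\cong\Hom_A(Q,X\oplus Y\otimes_BM)$, so $Q\in\addc Q_X$, and symmetrically on the $B$-side.
 \item The chain $|A|+|B|=|T|+|P_T|\le|X|+|Q_X|+|Y|+|Q_Y|\le|A|+|B|$ forces $(X,Q_X)$ and $(Y,Q_Y)$ to be support $\tau$-tilting pairs, and $P_T\cong F_AQ_X\oplus F_BQ_Y$.
 \item Hence $\Hom_A(Q_X,Y\otimes_BM)=0=\Hom_B(Q_Y,X\otimes_AN)$. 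Together with the cross-Hom vanishing, \eqref{eq:Fac-orthogonality} gives both memberships.
\end{enumerate}

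Note also that \textup{(R)} is not what makes $F_A\sigma_X$ a minimal presentation. $F_A$ preserves minimal projective presentations for arbitrary $\phi,\psi$ (Lemma~\ref{lem:radical-context}\textup{(1)}). The radical condition is needed only for the summand count above.
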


The sufficient direction is the zero-correction specialization and holds
for arbitrary connecting maps, as does the $\tau$-rigidity criterion in
Lemma~\ref{lem:tau-rigid-direct}.  The radical condition is used in the
summand count for the converse.  For $N=0$, this extends the Gao--Huang
criterion from algebraically closed fields to arbitrary fields.

Section~2 recalls the required background.  Section~3 constructs the corner
corrections, proves the bilateral theorem, and gives obstructions to
unconditional termination.  Section~4 obtains the one-sided theorem and
Zhang's construction by specialization.  Section~5 derives direct induction
and proves its converse under radical pairings.

\section{Preliminaries}

\subsection{Morita context algebras and their modules}

Let $A$ and $B$ be finite-dimensional $k$-algebras,
let ${}_AN_B$ and ${}_BM_A$ be finite-dimensional bimodules, and let
\[
 \phi:N\otimes_BM\longrightarrow A,
 \qquad
 \psi:M\otimes_AN\longrightarrow B
\]
be compatible bimodule homomorphisms.  Thus
\begin{align}
 \phi(n\otimes m)n'&=n\psi(m\otimes n'),
 &m\phi(n\otimes m')&=\psi(m\otimes n)m'
 \label{eq:context-identities}
\end{align}
for $n,n'\in N$ and $m,m'\in M$.  They determine the Morita context algebra
\[
 \Lambda=
 \begin{pmatrix}A&N\\M&B\end{pmatrix}_{\phi,\psi}.
\]
Its multiplication is
\[
 \begin{pmatrix}a&n\\m&b\end{pmatrix}
 \begin{pmatrix}a'&n'\\m'&b'\end{pmatrix}
 =
 \begin{pmatrix}
  aa'+\phi(n\otimes m')&an'+nb'\\
  ma'+bm'&\psi(m\otimes n')+bb'
 \end{pmatrix}.
\]
Thus, inside $\Lambda$,
\[
 nm=\phi(n\otimes m),\qquad mn=\psi(m\otimes n),
 \qquad
 \operatorname{Im}\phi=NM,\quad \operatorname{Im}\psi=MN;
\]
see also \cite[Section~2]{GreenPsaroudakis}.  The underlying $k$-vector
space is $A\oplus N\oplus M\oplus B$, so $\Lambda$ is finite-dimensional.
Since a finitely generated module over a finite-dimensional $k$-algebra is
finite-dimensional over $k$, it would equivalently suffice to assume that
$M$ and $N$ are finitely generated on either one of their indicated sides.
Throughout, $D=\Hom_k(-,k)$ denotes the $k$-duality.
Write $e$ and $f$ for the two diagonal idempotents.  We use right modules.
A right $\Lambda$-module is a quadruple
\[
 W=(U,V,u,v),
\]
where $U\in\modu A$, $V\in\modu B$, and
\[
 u:U\otimes_AN\longrightarrow V,
 \qquad
 v:V\otimes_BM\longrightarrow U
\]
satisfy
\begin{align}
 v(u(x\otimes n)\otimes m)&=x\phi(n\otimes m),
 \label{eq:module-one}\\
 u(v(y\otimes m)\otimes n)&=y\psi(m\otimes n).
 \label{eq:module-two}
\end{align}
A morphism $(c,d):(U,V,u,v)\to(U',V',u',v')$ consists of an
$A$-linear map $c:U\to U'$ and a $B$-linear map $d:V\to V'$ satisfying
\[
 du=u'(c\otimes1_N),\qquad cv=v'(d\otimes1_M).
\]
Exactness of a sequence of Morita context modules is detected componentwise.

The corner induction functors are
\[
 F_A=-\otimes_Ae\Lambda,
 \qquad
 F_B=-\otimes_Bf\Lambda.
\]
By \cite[Proposition~2.4(ii)]{GreenPsaroudakis}, we have the adjoint pairs
\[
 F_A\dashv(-)e,
 \qquad
 F_B\dashv(-)f.
\]
By \cite[Proposition~2.4(i)]{GreenPsaroudakis}, both $F_A$ and $F_B$ are
fully faithful.  By \cite[Proposition~2.4(iii)]{GreenPsaroudakis}, both
corner restriction functors are exact.  Explicitly,
\begin{align*}
 F_AU&=(U,U\otimes_AN,1_{U\otimes N},1_U\otimes\phi),\\
 F_BV&=(V\otimes_BM,V,1_V\otimes\psi,1_{V\otimes M}).
\end{align*}

\subsection{Support \texorpdfstring{$\tau$}{tau}-tilting modules and
approximation tools}

Let $C$ be a finite-dimensional algebra and $\tau_C$ its
Auslander--Reiten translation.  For $T\in\modu C$, write $|T|$ for the
number of isomorphism classes of its indecomposable direct summands; in
particular, $|C|$ is the number of simple $C$-modules up to isomorphism.

\begin{definition}\label{def:tau-tilting}
Let $T\in\modu C$ and let $P$ be projective.
\begin{enumerate}[label=\textup{(\arabic*)},leftmargin=2.2em,nosep]
 \item The module $T$ is \emph{$\tau$-rigid} if
 $\Hom_C(T,\tau_CT)=0$.  It is \emph{$\tau$-tilting} if, in addition,
 $|T|=|C|$.
 \item The pair $(T,P)$ is a \emph{$\tau$-rigid pair} if $T$ is
 $\tau$-rigid and $\Hom_C(P,T)=0$.  It is a \emph{support
 $\tau$-tilting pair} if, in addition, $|T|+|P|=|C|$.
 \item The module $T$ is \emph{support $\tau$-tilting} if it is the first
 component of such a pair.  Write $\stau C$ for the set of isomorphism
 classes of support $\tau$-tilting $C$-modules.
\end{enumerate}
\end{definition}

We call $C$ \emph{$\tau$-tilting finite} if $\stau C$ is finite; this is
equivalent to finiteness of the set of $\tau$-tilting modules
\cite[Proposition~3.9]{DIJ}.

\begin{definition}\label{def:torsion-notation}
Let $T\in\modu C$ and let $\mathcal T$ be a full subcategory of $\modu C$.
\begin{enumerate}[label=\textup{(\arabic*)},leftmargin=2.2em,nosep]
 \item The subcategory $\addc T$ consists of the direct summands of finite
 direct sums of copies of $T$, and $\Fac_CT$ consists of the factor modules
 of objects of $\addc T$.
 \item The subcategory $\mathcal T$ is a \emph{torsion class} if it is
 nonempty and closed under factor modules and extensions.
 \item An object $E\in\mathcal T$ is \emph{Ext-projective in $\mathcal T$}
 if $\Ext_C^1(E,T')=0$ for every $T'\in\mathcal T$.  We write
 $\mathcal P(\mathcal T)$ for the full subcategory of such objects.  If it
 has a finite additive generator, then $P(\mathcal T)$ denotes the direct
 sum of one representative from each isomorphism class of its
 indecomposable objects.
\end{enumerate}
\end{definition}

We record the support $\tau$-tilting--torsion correspondence used below.
This also fixes the precise meaning of the Ext-projective generator appearing
in the construction.

\begin{theorem}
\label{thm:AIR-torsion}
Let $T$ be a finite-dimensional right $C$-module.
\begin{enumerate}[label=\textup{(\arabic*)},leftmargin=2.2em]
 \item \textup{(\cite[Proposition~1.2]{AIR})} $T$ is $\tau$-rigid if and only if
$\Ext_C^1(T,\Fac_CT)=0$.  If these conditions hold, then $\Fac_CT$ is a
functorially finite torsion class and
\[
 T\in\addc P(\Fac_CT).
\]
 \item \textup{(\cite[Theorem~2.7]{AIR})} The assignment
$T\mapsto\Fac_CT$ is a bijection from
$\stau C$ to the functorially finite torsion classes in $\modu C$; its
inverse sends a functorially finite torsion class $\mathcal U$ to
$P(\mathcal U)$.
 \item \textup{(\cite[Corollary~2.13(c)]{AIR})} If $(T,P)$ is a support
$\tau$-tilting pair, then
 \begin{equation}\label{eq:Fac-orthogonality}
  \Fac_CT
  =\{L\in\modu C\mid
       \Hom_C(L,\tau_CT)=0=\Hom_C(P,L)\}.
 \end{equation}
\end{enumerate}
\end{theorem}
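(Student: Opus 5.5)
This theorem collects known results, so the proof is mostly a matter of citing \cite{AIR} correctly. I would still sketch the mechanism for each part, since the same ideas (the Auslander--Reiten formula and Ext-projectivity) are used again later.

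For (1), the plan is to use the Auslander--Reiten formula in the form $\Ext_C^1(T,L)\cong D\,\overline{\Hom}_C(L,\tau_CT)$, where $\overline{\Hom}$ denotes morphisms modulo those factoring through injectives. Together with the mono-epi interplay for factor modules (Auslander--Smal\o), this gives the following. If $\Hom_C(T,\tau_CT)=0$, then $\Hom_C(L,\tau_CT)=0$ for every $L\in\Fac_CT$, because a nonzero map $L\to\tau_CT$ composed with an epimorphism $T^n\to L$ would be nonzero. Hence $\Ext^1_C(T,\Fac_CT)=0$.

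Conversely, suppose $\Ext_C^1(T,\Fac_CT)=0$. Let $\tau_CT\to I$ be an injective envelope and take $L$ to be the image of a map $T\to\tau_CT$. Then \cite[Proposition~5.8]{AS} (the Auslander--Smal\o{} characterization) forces $\Hom_C(T,\tau_CT)=0$.

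For the remaining claims in (1), I would argue as follows.
\begin{itemize}
 \item $\Fac_CT$ is closed under factor modules by definition.
 \item It is closed under extensions by the Ext-vanishing: given $0\to L'\to E\to L\to 0$ with $L,L'\in\Fac_CT$, lift the epimorphism $T^n\to L$ to $E$; then $E$ is a quotient of $T^n\oplus T^m$.
 \item It is functorially finite because $\Fac$ of a single module is always covariantly finite, and contravariant finiteness follows from \cite{AS} (torsion classes $\Fac T$ are functorially finite).
 \item $T$ is Ext-projective in $\Fac_CT$, so $T\in\addc P(\Fac_CT)$.
\end{itemize}

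For (2), I would follow \cite[Theorem~2.7]{AIR}. Start from a functorially finite torsion class $\mathcal U$, so that $\mathcal U=\Fac_CU$ for some $U$. Then $P(\mathcal U)$ has finitely many indecomposables, $\mathcal U=\Fac P(\mathcal U)$, and $P(\mathcal U)$ is $\tau$-rigid by (1). Choose the projective $P$ to be the sum of indecomposable projectives $P_i$ with $\Hom_C(P_i,\mathcal U)=0$.

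The key step, and the main obstacle, is the count $|P(\mathcal U)|+|P|=|C|$. This rests on Bongartz completion together with the inequality $|T|+|P|\le|C|$ for $\tau$-rigid pairs. I would reduce to the sincere case by passing to $C/\langle e_P\rangle$, where $\tau$-tilting modules are exactly the sincere support $\tau$-tilting modules, and then invoke \cite[Theorem~2.10, Proposition~2.1]{AIR}.

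For (3), the plan is as follows. The inclusion $\subseteq$ follows from (1) and the condition $\Hom(P,T)=0$. For $\supseteq$, the right-hand side $^\perp(\tau T)\cap P^\perp$ is a torsion class containing $T$, and it is functorially finite. By Bongartz completion, its Ext-projective generator is a support $\tau$-tilting module containing $T$ as a summand. Maximality of the support $\tau$-tilting pair $(T,P)$ then forces the two torsion classes to coincide. This last step is exactly \cite[Corollary~2.13]{AIR}, which I would cite rather than reprove.
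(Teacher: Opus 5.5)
The paper gives no proof of this statement: all three parts are quoted from \cite{AIR}. Resting on those citations is therefore exactly the paper's route, and your outlines of (1) and (3) are sound.

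The mechanism you propose for the key count in (2), however, does not work. The $\tau$-rigid-pair inequality and Bongartz completion only ever give the upper bound $|P(\mathcal U)|+|P|\le|C|$. Completing $P(\mathcal U)$ over $B=C/\langle e_P\rangle$ yields a $\tau$-tilting $T'$ with $P(\mathcal U)\in\addc T'$ and $\Fac_BT'={}^{\perp}(\tau_BP(\mathcal U))\supseteq\mathcal U$. To conclude $T'\in\addc P(\mathcal U)$ you would need ${}^{\perp}(\tau_BP(\mathcal U))=\mathcal U$, which is essentially part (3) for the very pair you are trying to construct.

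Nothing in that step uses that $P(\mathcal U)$ contains \emph{all} Ext-projectives of $\mathcal U$. The other properties you use ($\tau$-rigid, generating $\mathcal U$, sincere after the reduction) are shared by $K=e_1\Gamma$ in Example~\ref{ex:completion-term}. There $\Hom_\Gamma(e_i\Gamma,K)\neq0$ for $i=1,2$, so $P=0$ and $\Fac_\Gamma K$ is sincere, yet $|K|=1<2=|\Gamma|$ and $K$ is not $\tau$-tilting. So no argument using only those ingredients can prove the count.

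The lower bound has to come from the left approximation of the algebra. Take a left $\addc P(\mathcal U)$-approximation $c:C\to M'$. Exactly as in the proof of Proposition~\ref{prop:module-coBongartz}, which at that stage uses only part (1) and $\Fac_CP(\mathcal U)=\mathcal U$, the map $c$ is a left $\mathcal U$-approximation and $\Coker c$ is Ext-projective in $\mathcal U$. Hence $\Coker c\in\addc P(\mathcal U)$. Proposition~\ref{prop:AIR-criteria}\textup{(4)} then shows directly that $P(\mathcal U)$ is support $\tau$-tilting, with no reduction to the sincere case. In Example~\ref{ex:completion-term} this cokernel is precisely the summand $\mathbf 1$ that $K$ lacks.

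Two minor corrections in (1):
\begin{itemize}
\item In the converse, the relevant injective envelope is that of $L=\operatorname{Im}g$ for a nonzero $g:T\to\tau_CT$, not that of $\tau_CT$. The Auslander--Reiten formula makes the inclusion $L\hookrightarrow\tau_CT$ factor through the injective envelope of $L$, which would then be a nonzero injective summand of $\tau_CT$, a contradiction.
\item It is contravariant finiteness of $\Fac_CT$ that is elementary, via the trace of $T$. Covariant finiteness is the Auslander--Smal\o{} input.
\end{itemize}
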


We shall also use the following presentation, approximation, and numerical
criteria.

\begin{proposition}
\label{prop:AIR-criteria}
Let $T,L\in\modu C$, let
$\sigma_T:P_1^T\to P_0^T$ be a minimal projective presentation of $T$,
and let $P$ be projective.
\begin{enumerate}[label=\textup{(\arabic*)},leftmargin=2.2em]
 \item \textup{(\cite[Proposition~2.4(b)]{AIR})} The map
$\Hom_C(\sigma_T,L)$ is surjective if and only if
 $\Hom_C(L,\tau_CT)=0$.
 \item \textup{(\cite[Propositions~1.3 and 2.3(a)]{AIR})} If $(T,P)$ is a
$\tau$-rigid pair, then
 $|T|+|P|\leq |C|$.
 \item \textup{(\cite[Proposition~2.3(b)]{AIR})} If $T$ is support
$\tau$-tilting, then the projective module
 $P$ for which $(T,P)$ is a support $\tau$-tilting pair is unique up to
 isomorphism.
 \item \textup{(\cite[Proposition~2.14]{Jasso})} A $\tau$-rigid module $T$
is support $\tau$-tilting if and only if there is an exact sequence
\[
 C\xrightarrow{f}T_0\longrightarrow T_1\longrightarrow0
\]
with $T_0,T_1\in\addc T$ such that $f$ is a left
$\addc T$-approximation.
\end{enumerate}
\end{proposition}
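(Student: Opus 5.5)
Each item of the proposition is a citation, so the plan is to indicate how each follows from standard tools: the Auslander--Reiten formula, reduction modulo an idempotent, and Bongartz completion. I would take the items in the order stated, since (2) feeds into (3) and (4).

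\textbf{Item (1).} I would apply the Nakayama functor $\nu=D\Hom_C(-,C)$ to the minimal presentation $P_1^T\xrightarrow{\sigma_T}P_0^T\to T\to0$. This gives the exact sequence $0\to\tau_CT\to\nu P_1^T\to\nu P_0^T\to\nu T\to0$. Next I would use the natural isomorphism $\Hom_C(P,L)\cong D\Hom_C(L,\nu P)$ for projective $P$. Applying $D\Hom_C(L,-)$ to the left part of this sequence and comparing gives the exact sequence
\[
 \Hom_C(P_0^T,L)\xrightarrow{\Hom(\sigma_T,L)}\Hom_C(P_1^T,L)\longrightarrow D\Hom_C(L,\tau_CT)\longrightarrow0 .
\]
Hence $\Hom(\sigma_T,L)$ is surjective exactly when $\Hom_C(L,\tau_CT)=0$.

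\textbf{Item (2).} Write $P=\epsilon C$ for an idempotent $\epsilon$, so that $|P|$ counts the simples killed by $\epsilon$. Since $\Hom_C(P,T)=0$, $T$ is a module over $\bar C=C/\langle\epsilon\rangle$. I would check that it remains $\tau_{\bar C}$-rigid there, because $\tau_{\bar C}T$ is a submodule of $\tau_CT$ via the comparison of minimal presentations. It therefore suffices to show $|T|\le|\bar C|=|C|-|P|$, that is, the case $P=0$. For this I would pass to $C/\operatorname{ann}T$, where $T$ is a partial tilting module, and use Bongartz completion there; the classical bound $|T|\le|C/\operatorname{ann}T|\le|C|$ for partial tilting modules then finishes.

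\textbf{Item (3).} Let $(T,P)$ be a support $\tau$-tilting pair, and suppose $Q$ is an indecomposable projective with $\Hom_C(Q,T)=0$ and $Q\notin\addc P$. Then $(T,P\oplus Q)$ is still a $\tau$-rigid pair, and $|T|+|P\oplus Q|>|C|$, contradicting (2). Hence $\addc P$ consists exactly of the projectives $Q$ with $\Hom_C(Q,T)=0$. This subcategory depends only on $T$, which gives uniqueness of the basic $P$.

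\textbf{Item (4).} For the forward direction, assume $T$ is support $\tau$-tilting. By Theorem~\ref{thm:AIR-torsion}, $\Fac_CT$ is functorially finite and $P(\Fac_CT)\cong T$. I would take a minimal left $\addc T$-approximation $f:C\to T_0$; it is also a left $\Fac_CT$-approximation, because $T$ generates $\Fac_CT$. Put $T_1=\Coker f\in\Fac_CT$. I would show $T_1$ is Ext-projective in $\Fac_CT$ by a Wakamatsu-type argument: apply $\Hom_C(-,L)$ for $L\in\Fac_CT$ to $0\to\operatorname{Im}f\to T_0\to T_1\to0$, and use that $f$ is an approximation together with $\Ext^1_C(T_0,L)=0$. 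Then $T_1\in\addc P(\Fac_CT)=\addc T$.

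For the converse, the sequence shows that $T$ is Ext-projective and that $\addc T$ already contains the Ext-projective generator of $\Fac_CT$. So $T\cong P(\Fac_CT)$, which is support $\tau$-tilting by Theorem~\ref{thm:AIR-torsion}(2).

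I expect the counting in (2) to be the main obstacle. The Bongartz reduction and the behaviour of $\tau$ under passing to $C/\operatorname{ann}T$ are where the real content lies. The other items are formal consequences of it.
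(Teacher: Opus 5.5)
Your arguments for (1), (3) and the forward half of (4) are correct. Your sketch of (2) is a reasonable outline of the route behind the AIR citation, with two small slips. First, $|P|$ counts the simples \emph{not} annihilated by $\epsilon$. Second, ``partial tilting over $C/\operatorname{ann}T$'' still needs the faithfulness step: $D(C/\operatorname{ann}T)\in\Fac T$, hence projective dimension at most one. The paper proves none of these items and simply cites AIR and Jasso.

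\textbf{The gap is in the converse of (4).} This is exactly the direction the paper later relies on in Proposition~\ref{prop:module-coBongartz}. You assert that ``the sequence shows \dots\ that $\addc T$ already contains the Ext-projective generator of $\Fac_CT$'', but you give no argument.
\begin{itemize}
\item $\tau$-rigidity, not the sequence, gives $\addc T\subseteq\mathcal P(\Fac_CT)$.
\item Your Wakamatsu-type computation shows only that $\Coker f$ is Ext-projective, which is the same inclusion again.
\item What you need is the reverse inclusion $\mathcal P(\Fac_CT)\subseteq\addc T$, and it is not formal.
\end{itemize}
In Example~\ref{ex:completion-term}, the Ext-projective $\mathbf 1$ does not appear when you approximate its projective cover, since that approximation is $1_K$. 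It appears only as the cokernel of the approximation of $\mathbf 2=\Omega\mathbf 1$. So the argument has to use how an Ext-projective object is presented.

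\textbf{One way to close the gap.}
\begin{enumerate}
\item Let $E$ be an indecomposable Ext-projective object of $\mathcal T=\Fac_CT$, with minimal presentation $P_1\xrightarrow{\sigma}P_0\xrightarrow{\pi}E\to0$.
\item By the Auslander--Smal\o\ criterion, $\Ext^1_C(E,\mathcal T)=0$ gives $\Hom_C(T,\tau_CE)=0$, hence $\Hom_C(\mathcal T,\tau_CE)=0$. Item (1) then says that $\Hom_C(\sigma,L)$ is surjective for every $L\in\mathcal T$.
\item Since $f$ is a left $\mathcal T$-approximation with $T_0,T_1\in\addc T$, split off the zero part of $f^{\oplus m}$. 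Minimal approximations of direct sums are direct sums. Hence every projective $Q$ has a minimal left $\mathcal T$-approximation $a_Q\colon Q\to T_Q$ with $T_Q,\Coker a_Q\in\addc T$.
\item Write $a_{P_1}=\beta\sigma$ and put $j=\left(\begin{smallmatrix}\pi\\ \beta\end{smallmatrix}\right)\colon P_0\to E\oplus T_{P_1}$.
\item Let $u\colon P_0\to L$ with $L\in\mathcal T$, and write $u\sigma=va_{P_1}$. Then $u-v\beta$ kills $\operatorname{Im}\sigma$, so $u=w\pi+v\beta$ for some $w$. Thus $j$ is a left $\mathcal T$-approximation, and a direct check gives $\Coker j\cong\Coker a_{P_1}$.
\item Comparing $j$ with $a_{P_0}$ gives $E\oplus T_{P_1}\cong T_{P_0}\oplus W$, where $W$ is a summand of $\Coker j\in\addc T$. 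Krull--Schmidt then yields $E\in\addc T$.
\end{enumerate}
Hence $\addc T=\mathcal P(\Fac_CT)$, and your final appeal to Theorem~\ref{thm:AIR-torsion}(2) goes through.
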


We denote the projective module in
Proposition~\ref{prop:AIR-criteria}\textup{(3)} by $P_T$.

\begin{definition}\label{def:approximations}
Let $\mathcal C$ be a full subcategory of $\modu C$.
\begin{enumerate}[label=\textup{(\arabic*)},leftmargin=2.2em,nosep]
 \item A morphism $a:L\to C_L$ with $C_L\in\mathcal C$ is a \emph{left
 $\mathcal C$-approximation} if every morphism from $L$ to an object of
 $\mathcal C$ factors through $a$.  Right approximations are defined
 dually.
 \item A left approximation $a:L\to C_L$ is \emph{left minimal} if every
 endomorphism $s$ of $C_L$ satisfying $sa=a$ is invertible.  Right
 minimality is defined dually.
 \item The subcategory $\mathcal C$ is \emph{covariantly finite} if every
 module admits a left $\mathcal C$-approximation, \emph{contravariantly
 finite} if every module admits a right $\mathcal C$-approximation, and
 \emph{functorially finite} if it has both properties.
\end{enumerate}
\end{definition}

In the present Hom-finite Krull--Schmidt setting, an approximation into a
subcategory closed under direct summands decomposes into a minimal
approximation and a zero component.  Minimal approximations are unique up
to isomorphism.

The next lemma records the standard relative presentation property of a
functorially finite torsion class.  We include a proof that applies to any
right $\addc T$-approximation.

\begin{lemma}[cf.~\JassoRelativeCitation]
\label{lem:relative-covers}
Let $(T,P)$ be a support $\tau$-tilting pair over a finite-dimensional
algebra $C$ and let $U\in\Fac_CT$.  Every right $\addc T$-approximation
$g:T_0\to U$ fits into an exact sequence
\[
 0\longrightarrow U'\longrightarrow T_0\xrightarrow{g}U\longrightarrow0
\]
with $U'\in\Fac_CT$.  In particular, every object of $\Fac_CT$ admits
such a sequence with $T_0\in\addc T$.
\end{lemma}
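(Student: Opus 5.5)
Let $g:T_0\to U$ be a right $\addc T$-approximation with $U\in\Fac_CT$, and put $U'=\ker g$. There are three things to show: $g$ is surjective, $U'\in\Fac_CT$, and the final sentence holds.

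\emph{Surjectivity.} Since $U\in\Fac_CT$, there is an epimorphism $h:T_1\to U$ with $T_1\in\addc T$. Because $g$ is a right approximation, $h=g h'$ for some $h':T_1\to T_0$. Hence $\operatorname{Im}g\supseteq\operatorname{Im}h=U$, so $g$ is surjective. This gives the exact sequence $0\to U'\to T_0\to U\to0$.

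\emph{The kernel lies in $\Fac_CT$.} I would use the description \eqref{eq:Fac-orthogonality} from Theorem~\ref{thm:AIR-torsion}(3). It requires two vanishings.
\begin{enumerate}[label=\textup{(\roman*)},leftmargin=2.2em,nosep]
 \item $\Hom_C(P,U')=0$. This holds because $U'\subseteq T_0$ and $\Hom_C(P,T_0)=0$.
 \item $\Hom_C(U',\tau_CT)=0$. This is the main step.
\end{enumerate}

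For (ii), apply $\Hom_C(-,\tau_CT)$ to the sequence:
\[
 \Hom_C(T_0,\tau_CT)\to\Hom_C(U',\tau_CT)\to\Ext^1_C(U,\tau_CT).
\]
The first term vanishes because $T$ is $\tau$-rigid. It is therefore enough to show that the connecting map $\Hom_C(U',\tau_CT)\to\Ext^1_C(U,\tau_CT)$ is zero. I expect to avoid $\tau_CT$ by turning to the Ext-side, as follows.

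By the Auslander--Reiten formula, $D\overline{\Hom}_C(T,U')\cong\Ext^1_C(U',\tau_CT)$, and dually $\Hom_C(U',\tau_C T)\cong D\Ext^1_C(T,U')$ modulo the maps factoring through injectives. A cleaner route is to show $\Ext^1_C(T,U')=0$ and then conclude using the following argument.

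\emph{Showing $\Ext^1_C(T,U')=0$.} Apply $\Hom_C(T,-)$ to the sequence:
\[
 \Hom_C(T,T_0)\xrightarrow{g_*}\Hom_C(T,U)\to\Ext^1_C(T,U')\to\Ext^1_C(T,T_0).
\]
The map $g_*$ is surjective because $g$ is a right $\addc T$-approximation. The last term vanishes by $\tau$-rigidity (Theorem~\ref{thm:AIR-torsion}(1)). Hence $\Ext^1_C(T,U')=0$.

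\emph{Converting this into $\Hom_C(U',\tau_CT)=0$.} This conversion is the step I expect to be the main obstacle. I would use Proposition~\ref{prop:AIR-criteria}(1): it suffices to show that $\Hom_C(\sigma_T,U')$ is surjective, where $\sigma_T:P_1^T\to P_0^T$ is a minimal projective presentation.

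Let $\alpha:P_1^T\to U'$ be given. Compose with the inclusion $\iota:U'\to T_0$. Since $T_0\in\Fac_CT$, Proposition~\ref{prop:AIR-criteria}(1) together with $\tau$-rigidity gives $\iota\alpha=\beta\sigma_T$ for some $\beta:P_0^T\to T_0$.

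Next consider $g\beta:P_0^T\to U$. It vanishes on $\operatorname{Im}\sigma_T$, so it induces a map $\bar\beta:T\to U$. Because $g$ is an approximation, $\bar\beta$ lifts to $\gamma:T\to T_0$ with $g\gamma=\bar\beta$. Write $\pi:P_0^T\to T$ for the cokernel map. Then $\beta-\gamma\pi$ maps into $\ker g=U'$, and
\[
 (\beta-\gamma\pi)\sigma_T=\beta\sigma_T=\iota\alpha.
\]
Therefore $\alpha$ factors through $\sigma_T$, which proves surjectivity of $\Hom_C(\sigma_T,U')$. This direct lifting argument bypasses the Ext computation above entirely, and I would present it as the core of the proof.

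\emph{Final sentence.} The existence of some right $\addc T$-approximation $T_0\to U$ follows because $\addc T$ is contravariantly finite, $T$ having finitely many indecomposable summands. One can take $T_0=T^{\,n}$, where $n=\dim_k\Hom_C(T,U)$, with the map given by a basis of $\Hom_C(T,U)$.
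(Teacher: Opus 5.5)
Your proof is correct and follows the paper's route: surjectivity of $g$ from the approximation property, $\Hom_C(P,U')=0$ from $U'\subseteq T_0$, $\Hom_C(U',\tau_CT)=0$, then formula~\eqref{eq:Fac-orthogonality}, and the evaluation map for the last sentence. The one difference is that the paper cites \cite[Lemma~2.6]{AIR} for $\Hom_C(U',\tau_CT)=0$, whereas you prove it directly by the lifting argument through $\sigma_T$ and Proposition~\ref{prop:AIR-criteria}\textup{(1)}, which is the standard proof of that lemma; drop the Auslander--Reiten detour, since it is not needed and its first displayed formula is misstated.
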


\begin{proof}
Since $U\in\Fac_CT$, an epimorphism from an object of $\addc T$ to $U$
factors through $g$, so $g$ is epic.  Put $U'=\ker g$.
By \cite[Lemma~2.6]{AIR}, one has $\Hom_C(U',\tau_CT)=0$.
The inclusion $U'\hookrightarrow T_0$ and $\Hom_C(P,T_0)=0$ give
$\Hom_C(P,U')=0$.  Formula~\eqref{eq:Fac-orthogonality} now gives
$U'\in\Fac_CT$.  For the last assertion, take the evaluation map
$T^{\oplus s}\to U$ determined by a $k$-basis of $\Hom_C(T,U)$; it is a
right $\addc T$-approximation.
\end{proof}

Section~3 also records a relative Wakamatsu lemma
(Lemma~\ref{lem:Wakamatsu}) and a module form of co-Bongartz completion
(Proposition~\ref{prop:module-coBongartz}), followed by a small example
illustrating the completion term.  Remark~\ref{rem:completion-map}
discusses convenient minimal and canonical choices of the completion map.

\subsection{The componentwise torsion class}

Let $X\in\stau A$ and $Y\in\stau B$, and put
\[
 \mathcal X=\Fac_AX,
 \qquad
 \mathcal Y=\Fac_BY.
\]
We consider the componentwise class
\[
 \mathcal T_{X,Y}
 =\{(U,V,u,v)\in\modu\Lambda\mid U\in\mathcal X,
                                      \ V\in\mathcal Y\}.
\]
It is a torsion class because quotients and extensions are computed
componentwise.

\section{Bilateral approximation gluing}

\subsection{Elementary corner corrections}

Put
\[
 \mathcal C_{\mathcal X}
 =\{W\in\modu\Lambda\mid We\in\mathcal X\},
 \qquad
 \mathcal C_{\mathcal Y}
 =\{W\in\modu\Lambda\mid Wf\in\mathcal Y\}.
\]
Both are torsion classes and
$\mathcal T_{X,Y}=\mathcal C_{\mathcal X}\cap\mathcal C_{\mathcal Y}$.
The classes $\mathcal X$ and $\mathcal Y$ are functorially finite, so the
minimal left approximations used below exist.  They, and hence the resulting
corrections, are unique up to isomorphism.

We next define two elementary correction operations.  Let
$W=(U,V,u,v)$.  For the $\mathcal X$-correction, choose a minimal left
$\mathcal X$-approximation $a_W:U\to U^{\mathcal X}$ and form the pushout
in $\modu B$
\begin{equation}\label{eq:RX-pushout}
\begin{tikzcd}[column sep=4.2em,row sep=2.8em]
 U\otimes_AN \arrow[r,"u"]\arrow[d,"a_W\otimes1_N"']
 &V\arrow[d,"\iota_{\mathcal X}^W"]\\
 U^{\mathcal X}\otimes_AN
 \arrow[r,"\mu_{\mathcal X}^W"']&V^{\mathcal X} .
\end{tikzcd}
\end{equation}
Define two $A$-module homomorphisms
\begin{align*}
 r_V^W&:=a_Wv:V\otimes_BM\longrightarrow U^{\mathcal X},\\
 r_N^W&:=1_{U^{\mathcal X}}\otimes\phi:
 U^{\mathcal X}\otimes_AN\otimes_BM\longrightarrow U^{\mathcal X}.
\end{align*}
Here and below we use the canonical identification
$U^{\mathcal X}\otimes_AA\cong U^{\mathcal X}$; in particular,
$r_N^W(x_{\mathcal X}\otimes n\otimes m)
=x_{\mathcal X}\phi(n\otimes m)$.
The first module identity for $W$ and the $A$-linearity of $a_W$ give
\[
 r_V^W(u\otimes1_M)
 =r_N^W(a_W\otimes1_N\otimes1_M).
\]
Since $-\otimes_BM$ preserves pushouts, tensoring
\eqref{eq:RX-pushout} with $M$ gives a pushout square in $\modu A$.
Therefore this equality and the universal property of that pushout yield a
unique map
$v_{\mathcal X}^W:V^{\mathcal X}\otimes_BM\to U^{\mathcal X}$ satisfying
\begin{align*}
 v_{\mathcal X}^W(\iota_{\mathcal X}^W\otimes1_M)
 &=r_V^W=a_Wv,\\
 v_{\mathcal X}^W(\mu_{\mathcal X}^W\otimes1_M)
 &=r_N^W.
\end{align*}
Set
\[
 R_{\mathcal X}(W)
 =(U^{\mathcal X},V^{\mathcal X},
   \mu_{\mathcal X}^W,v_{\mathcal X}^W),
 \qquad
 \rho_{\mathcal X}^W=(a_W,\iota_{\mathcal X}^W):
 W\longrightarrow R_{\mathcal X}(W).
\]

For the $\mathcal Y$-correction, choose a minimal left
$\mathcal Y$-approximation $b_W:V\to V^{\mathcal Y}$ and form the pushout
in $\modu A$
\[
\begin{tikzcd}[column sep=4.2em,row sep=2.8em]
 V\otimes_BM \arrow[r,"v"]\arrow[d,"b_W\otimes1_M"']
 &U\arrow[d,"\kappa_{\mathcal Y}^W"]\\
 V^{\mathcal Y}\otimes_BM
 \arrow[r,"\nu_{\mathcal Y}^W"']&U^{\mathcal Y} .
\end{tikzcd}
\]
Define two $B$-module homomorphisms
\begin{align*}
 s_U^W&:=b_Wu:U\otimes_AN\longrightarrow V^{\mathcal Y},\\
 s_M^W&:=1_{V^{\mathcal Y}}\otimes\psi:
 V^{\mathcal Y}\otimes_BM\otimes_AN\longrightarrow V^{\mathcal Y}.
\end{align*}
Thus
$s_M^W(y_{\mathcal Y}\otimes m\otimes n)
=y_{\mathcal Y}\psi(m\otimes n)$.
The second module identity for $W$ and the $B$-linearity of $b_W$ give
\[
 s_U^W(v\otimes1_N)
 =s_M^W(b_W\otimes1_M\otimes1_N).
\]
Since $-\otimes_AN$ preserves pushouts, tensoring
this pushout square with $N$ gives a pushout square in $\modu B$.
Therefore this equality and the universal property yield a unique map
$u_{\mathcal Y}^W:U^{\mathcal Y}\otimes_AN\to V^{\mathcal Y}$ satisfying
\begin{align*}
 u_{\mathcal Y}^W(\kappa_{\mathcal Y}^W\otimes1_N)
 &=s_U^W=b_Wu,\\
 u_{\mathcal Y}^W(\nu_{\mathcal Y}^W\otimes1_N)
 &=s_M^W.
\end{align*}
Set
\[
 R_{\mathcal Y}(W)
 =(U^{\mathcal Y},V^{\mathcal Y},
   u_{\mathcal Y}^W,\nu_{\mathcal Y}^W),
 \qquad
 \rho_{\mathcal Y}^W=(\kappa_{\mathcal Y}^W,b_W):
 W\longrightarrow R_{\mathcal Y}(W).
\]

The preceding universal-property constructions are well defined in the
Morita context module category.

\begin{proposition}[Well-definedness of the corrections]
\label{prop:well-defined-corrections}
For every $W\in\modu\Lambda$, the quadruples
$R_{\mathcal X}(W)$ and $R_{\mathcal Y}(W)$ are $\Lambda$-modules, and
\[
 \rho_{\mathcal X}^W:W\longrightarrow R_{\mathcal X}(W),
 \qquad
 \rho_{\mathcal Y}^W:W\longrightarrow R_{\mathcal Y}(W)
\]
are morphisms of $\Lambda$-modules.  Moreover,
\[
 R_{\mathcal X}(W)\in\mathcal C_{\mathcal X},
 \qquad
 R_{\mathcal Y}(W)\in\mathcal C_{\mathcal Y}.
\]
\end{proposition}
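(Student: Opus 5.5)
By the evident symmetry between the two constructions (swap $A\leftrightarrow B$, $N\leftrightarrow M$, $\phi\leftrightarrow\psi$, $\mathcal X\leftrightarrow\mathcal Y$), we only treat $R_{\mathcal X}(W)$ and $\rho_{\mathcal X}^W$. The proof checks four things in order: the membership $R_{\mathcal X}(W)\in\mathcal C_{\mathcal X}$, the two module identities \eqref{eq:module-one} and \eqref{eq:module-two} for the quadruple $(U^{\mathcal X},V^{\mathcal X},\mu_{\mathcal X}^W,v_{\mathcal X}^W)$, and the two morphism equations for $\rho_{\mathcal X}^W=(a_W,\iota_{\mathcal X}^W)$.

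\emph{Membership and the morphism equations.} The membership is immediate, since $R_{\mathcal X}(W)e=U^{\mathcal X}\in\mathcal X$. The morphism equations are also direct. The square \eqref{eq:RX-pushout} commutes, so $\iota_{\mathcal X}^Wu=\mu_{\mathcal X}^W(a_W\otimes1_N)$. The defining property of $v_{\mathcal X}^W$ gives $a_Wv=v_{\mathcal X}^W(\iota_{\mathcal X}^W\otimes1_M)$. These are exactly the two compatibility conditions required of a morphism of Morita context modules.

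\emph{First module identity.} We need
\[
v_{\mathcal X}^W(\mu_{\mathcal X}^W(x\otimes n)\otimes m)=x\phi(n\otimes m).
\]
This is precisely the defining equation $v_{\mathcal X}^W(\mu_{\mathcal X}^W\otimes1_M)=r_N^W=1\otimes\phi$, so it holds by construction.

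\emph{Second module identity (the main step).} We must show that the two maps
\[
\alpha=\mu_{\mathcal X}^W(v_{\mathcal X}^W\otimes1_N),\qquad \beta=1_{V^{\mathcal X}}\otimes\psi
\]
from $V^{\mathcal X}\otimes_BM\otimes_AN$ to $V^{\mathcal X}$ agree. Since $-\otimes_BM\otimes_AN$ is right exact, it preserves the pushout \eqref{eq:RX-pushout}. Hence the maps $\iota_{\mathcal X}^W\otimes1\otimes1$ and $\mu_{\mathcal X}^W\otimes1\otimes1$ are jointly epimorphic, and it suffices to check $\alpha=\beta$ after precomposing with each of them.

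On the $V$-leg, $\alpha(\iota\otimes1\otimes1)=\mu(a_Wv\otimes1_N)=\mu(a_W\otimes1_N)(v\otimes1_N)=\iota u(v\otimes1_N)$. By \eqref{eq:module-two} for $W$ this equals $\iota(1_V\otimes\psi)$, and by naturality of $\psi$ this equals $\beta(\iota\otimes1\otimes1)$.

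On the $U^{\mathcal X}\otimes N$-leg, $\alpha(\mu\otimes1\otimes1)=\mu(1_{U^{\mathcal X}}\otimes\phi\otimes1_N)$, while $\beta(\mu\otimes1\otimes1)=\mu(1_{U^{\mathcal X}}\otimes1_N\otimes\psi)$ by naturality. These agree by the first context identity in \eqref{eq:context-identities}, $\phi(n\otimes m)n'=n\psi(m\otimes n')$, applied inside $U^{\mathcal X}\otimes_AN$.

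The only delicate point is the joint-epimorphism argument. Its validity rests on right exactness of tensor products, which makes the pushout (a cokernel) survive tensoring with $M\otimes_AN$. With that, the verification is complete, and the $\mathcal Y$-case is identical with the roles exchanged.
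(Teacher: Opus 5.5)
Your proposal is correct and follows essentially the paper's proof. The first module identity and the two morphism identities come directly from the pushout square and the defining equations of $v_{\mathcal X}^W$. Your leg-by-leg check of the second module identity is the paper's verification on the generators $\iota_{\mathcal X}(y)$ and $\mu_{\mathcal X}(x\otimes n')$ of $V^{\mathcal X}$, written diagrammatically.

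One small refinement: the joint-epimorphism step only needs that tensoring preserves surjections, so the full preservation of pushouts is more than you require.
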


\begin{proof}
We prove the assertions for $R_{\mathcal X}(W)$; the other case is obtained
by interchanging the two corners.  Suppress the superscript $W$ on
$\iota_{\mathcal X}^W$, $\mu_{\mathcal X}^W$, and
$v_{\mathcal X}^W$.  The first module identity is the second defining
identity for $v_{\mathcal X}^W$.  For the second identity, the images of
$\iota_{\mathcal X}$ and $\mu_{\mathcal X}$ generate
$V^{\mathcal X}$, and
\begin{align*}
 \mu_{\mathcal X}\bigl(
   v_{\mathcal X}(\iota_{\mathcal X}(y)\otimes m)\otimes n\bigr)
 &=\iota_{\mathcal X}\bigl(u(v(y\otimes m)\otimes n)\bigr)
  =\iota_{\mathcal X}\bigl(y\psi(m\otimes n)\bigr),\\
 \mu_{\mathcal X}\bigl(
   v_{\mathcal X}(\mu_{\mathcal X}(x_{\mathcal X}\otimes n')\otimes m)
   \otimes n\bigr)
 &=\mu_{\mathcal X}\bigl(
   x_{\mathcal X}\phi(n'\otimes m)\otimes n\bigr)\\
 &=\mu_{\mathcal X}(x_{\mathcal X}\otimes n')\psi(m\otimes n).
\end{align*}
The first line uses the first defining identity for $v_{\mathcal X}^W$,
the pushout relation, and \eqref{eq:module-two} for $W$.  The second line
uses its second defining identity and then
\eqref{eq:context-identities}.  Hence
$R_{\mathcal X}(W)$ satisfies
\eqref{eq:module-one}--\eqref{eq:module-two}.

The two identities
\[
 \iota_{\mathcal X}u
 =\mu_{\mathcal X}(a_W\otimes1_N),
 \qquad
 a_Wv=v_{\mathcal X}(\iota_{\mathcal X}\otimes1_M)
\]
are respectively the pushout relation in \eqref{eq:RX-pushout} and the
first defining identity for $v_{\mathcal X}^W$.  They are precisely the two
morphism identities for
$\rho_{\mathcal X}^W=(a_W,\iota_{\mathcal X})$.  Finally,
$U^{\mathcal X}\in\mathcal X$, so
$R_{\mathcal X}(W)\in\mathcal C_{\mathcal X}$.
\end{proof}

The superscript on $U^{\mathcal X},V^{\mathcal X}$ or
$U^{\mathcal Y},V^{\mathcal Y}$ records which correction was applied; it
does not indicate a change of corner.

\subsection{Finite correction ladders}

Starting from the two induced modules, define the alternating sequences
schematically by
\begin{align*}
 E_X^0=F_AX
 &\xrightarrow{\rho_{\mathcal Y}^{E_X^0}}E_X^1
 \xrightarrow{\rho_{\mathcal X}^{E_X^1}}E_X^2
 \xrightarrow{\rho_{\mathcal Y}^{E_X^2}}E_X^3\longrightarrow\cdots,\\
 E_Y^0=F_BY
 &\xrightarrow{\rho_{\mathcal X}^{E_Y^0}}E_Y^1
 \xrightarrow{\rho_{\mathcal Y}^{E_Y^1}}E_Y^2
 \xrightarrow{\rho_{\mathcal X}^{E_Y^2}}E_Y^3\longrightarrow\cdots,
\end{align*}
where $E_X^{i+1}$ and $E_Y^{i+1}$ are the targets of the displayed
correction morphisms.  At every step we use a minimal corner approximation.
These are the $X$- and $Y$-correction ladders.  A ladder terminates at its
first term belonging to $\mathcal T_{X,Y}$.
The chosen minimal approximations, and hence the correction terms, are
determined only up to isomorphism; no functor on all morphisms is being
asserted.

The next lemma records the approximation property of the corrections
constructed in Proposition~\ref{prop:well-defined-corrections}.

\begin{lemma}[Approximation property]\label{lem:elementary-correction}
For every $W\in\modu\Lambda$, the morphism $\rho_{\mathcal X}^W$ is a
minimal left $\mathcal C_{\mathcal X}$-approximation.  Symmetrically,
$\rho_{\mathcal Y}^W$ is a minimal left
$\mathcal C_{\mathcal Y}$-approximation.
\end{lemma}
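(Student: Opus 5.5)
We prove the statement for $\rho_{\mathcal X}^W$; the case of $\rho_{\mathcal Y}^W$ follows by interchanging the two corners. There are two parts. First we show the approximation property, by factoring an arbitrary morphism through the two universal properties that built $R_{\mathcal X}(W)$. Then we deduce left minimality from the minimality of $a_W$ and the fact that $V^{\mathcal X}$ is generated by the images of $\iota_{\mathcal X}^W$ and $\mu_{\mathcal X}^W$.

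\emph{Approximation.} Let $(c,d):W\to W'=(U',V',u',v')$ be a morphism with $W'\in\mathcal C_{\mathcal X}$, so $U'\in\mathcal X$.

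\begin{enumerate}[label=\textup{(\roman*)},leftmargin=2.2em,nosep]
 \item Since $a_W$ is a left $\mathcal X$-approximation, we get $c=c'a_W$ for some $c':U^{\mathcal X}\to U'$.
 \item Consider the two maps $d:V\to V'$ and $u'(c'\otimes1_N):U^{\mathcal X}\otimes_AN\to V'$. They agree on $U\otimes_AN$, because
\[
 du=u'(c\otimes1_N)=u'(c'\otimes1_N)(a_W\otimes1_N).
\]
 So the pushout \eqref{eq:RX-pushout} gives a unique $d':V^{\mathcal X}\to V'$ with $d'\iota_{\mathcal X}^W=d$ and $d'\mu_{\mathcal X}^W=u'(c'\otimes1_N)$. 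The second identity is the first morphism condition for $(c',d')$.
 \item For the second morphism condition, $c'v_{\mathcal X}^W=v'(d'\otimes1_M)$, we compare both sides on the pushout $V^{\mathcal X}\otimes_BM$, obtained by tensoring \eqref{eq:RX-pushout} with $M$.
 \begin{itemize}
  \item Precomposed with $\iota\otimes1_M$, the left side is $c'a_Wv=cv$ and the right side is $v'(d\otimes1_M)$. These agree because $(c,d)$ is a morphism.
  \item Precomposed with $\mu\otimes1_M$, the left side is $c'(1\otimes\phi)$. The right side is $v'(u'\otimes1_M)(c'\otimes1_N\otimes1_M)$, which equals $c'(1\otimes\phi)$ by \eqref{eq:module-one} for $W'$ and the $A$-linearity of $c'$.
 \end{itemize}
 By uniqueness in the tensored pushout, the two sides are equal.
\end{enumerate}

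Hence $(c',d')$ is a morphism of $\Lambda$-modules and $(c,d)=(c',d')\rho_{\mathcal X}^W$. Together with Proposition~\ref{prop:well-defined-corrections}, which places $R_{\mathcal X}(W)$ in $\mathcal C_{\mathcal X}$, this shows $\rho_{\mathcal X}^W$ is a left $\mathcal C_{\mathcal X}$-approximation.

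\emph{Minimality.} Let $(s,t)$ be an endomorphism of $R_{\mathcal X}(W)$ with $(s,t)\rho_{\mathcal X}^W=\rho_{\mathcal X}^W$, so $sa_W=a_W$ and $t\iota_{\mathcal X}=\iota_{\mathcal X}$.

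\begin{enumerate}[label=\textup{(\roman*)},leftmargin=2.2em,nosep]
 \item Minimality of $a_W$ makes $s$ an automorphism.
 \item The morphism condition gives $t\mu_{\mathcal X}=\mu_{\mathcal X}(s\otimes1_N)$.
 \item We want $t$ invertible. Since $s^{-1}a_W=a_W$, the pushout property yields a unique $t'$ with $t'\iota=\iota$ and $t'\mu=\mu(s^{-1}\otimes1_N)$.
 \item Then $tt'$ and $t't$ both fix $\iota$ and $\mu$, so by pushout uniqueness they equal the identity.
\end{enumerate}

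Alternatively, once $s$ is invertible, $t$ is surjective because the images of $\iota$ and $\mu$ generate $V^{\mathcal X}$, and a surjective endomorphism of a finite-dimensional module is bijective. Thus $(s,t)$ is an automorphism, and $\rho_{\mathcal X}^W$ is left minimal.

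The main obstacle is the second morphism identity in the approximation step: it needs the tensored pushout and module identity \eqref{eq:module-one} for $W'$. Everything else is formal from universal properties and the minimality of $a_W$.
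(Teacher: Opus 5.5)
Your proof is correct and follows essentially the same route as the paper. Both arguments factor $c$ through $a_W$, obtain the $B$-component from the pushout square, check the second morphism identity on the two generating pieces, and get minimality from the minimality of $a_W$ together with the fact that the images of $\iota_{\mathcal X}^W$ and $\mu_{\mathcal X}^W$ generate $V^{\mathcal X}$. The differences are minor variants: you check the morphism identity by uniqueness in the tensored pushout where the paper works elementwise, and you build an explicit inverse of $t$ from the pushout where the paper uses surjectivity plus finite-dimensionality (which you also give as your alternative).
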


\begin{proof}
We prove the assertion for $\rho_{\mathcal X}^W$ and suppress the
superscript $W$ on $\iota_{\mathcal X}^W$,
$\mu_{\mathcal X}^W$, and $v_{\mathcal X}^W$.
Let $Z=(U_Z,V_Z,u_Z,v_Z)\in\mathcal C_{\mathcal X}$ and let
$h=(c,d):W\to Z$.  Since $a_W$ is a left $\mathcal X$-approximation,
write $c=qa_W$, where $q:U^{\mathcal X}\to U_Z$.  To apply the pushout
property, we verify compatibility on its common source $U\otimes_AN$.
Since $h$ is a module morphism, its first morphism identity is
$du=u_Z(c\otimes1_N)$.  Together with $c=qa_W$, this gives
\[
 du=u_Z(c\otimes1_N)=u_Z((qa_W)\otimes1_N)
   =u_Z(q\otimes1_N)(a_W\otimes1_N).
\]
Thus the maps $d:V\to V_Z$ and
$u_Z(q\otimes1_N):U^{\mathcal X}\otimes_AN\to V_Z$ agree after
precomposition with $u$ and $a_W\otimes1_N$, respectively.  The universal
property of \eqref{eq:RX-pushout} consequently gives a unique map
$r:V^{\mathcal X}\to V_Z$ such that
\[
 r\iota_{\mathcal X}=d,
 \qquad r\mu_{\mathcal X}=u_Z(q\otimes1_N).
\]
The second equation is precisely the first morphism identity for
$(q,r):R_{\mathcal X}(W)\to Z$.  To verify the second morphism identity,
it is enough to check the two sets of generators of $V^{\mathcal X}$:
\begin{align*}
 v_Z(r\iota_{\mathcal X}(y)\otimes m)
 &=v_Z(d(y)\otimes m)=c(v(y\otimes m))\\
 &=qv_{\mathcal X}(\iota_{\mathcal X}(y)\otimes m),\\
 v_Z(r\mu_{\mathcal X}(x_{\mathcal X}\otimes n)\otimes m)
 &=v_Z(u_Z(q(x_{\mathcal X})\otimes n)\otimes m)\\
 &=q(x_{\mathcal X})\phi(n\otimes m)\\
 &=qv_{\mathcal X}(\mu_{\mathcal X}(x_{\mathcal X}\otimes n)\otimes m).
\end{align*}
The equality involving $c$ is the second morphism identity for $h$, while
the equality involving $\phi$ is the first module identity for $Z$.
Hence $(q,r)$ is a module morphism and
$h=(q,r)\rho_{\mathcal X}^W$.

If an endomorphism $s=(s_U,s_V)$ of $R_{\mathcal X}(W)$ satisfies
$s\rho_{\mathcal X}^W=\rho_{\mathcal X}^W$, then $s_Ua_W=a_W$, so $s_U$ is invertible by the
minimality of $a_W$.  Moreover,
$s_V\iota_{\mathcal X}=\iota_{\mathcal X}$ and
$s_V\mu_{\mathcal X}=\mu_{\mathcal X}(s_U\otimes1_N)$.  Since the images
of $\iota_{\mathcal X}$ and $\mu_{\mathcal X}$ generate
$V^{\mathcal X}$, the map $s_V$ is surjective and hence invertible.  Thus
$\rho_{\mathcal X}^W$ is left minimal.  The proof for
$R_{\mathcal Y}$ is obtained by interchanging the two corners.
\end{proof}

\begin{lemma}\label{lem:corner-Ext-projective}
The induced modules satisfy
\[
 \Ext^1_\Lambda(F_AX,\mathcal T_{X,Y})=0
 =\Ext^1_\Lambda(F_BY,\mathcal T_{X,Y}).
\]
\end{lemma}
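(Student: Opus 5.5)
We prove $\Ext^1_\Lambda(F_AX,W)=0$ for every $W\in\mathcal T_{X,Y}$; the assertion for $F_BY$ follows by interchanging the two corners. The tool is the adjunction $F_A\dashv(-)e$ together with exactness of $(-)e$ and the fact that $F_A$ sends projective $A$-modules to projective $\Lambda$-modules. The latter holds because $F_A(A)=e\Lambda$, and $F_A$ preserves direct sums and summands.

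Take a minimal projective presentation $P_1\xrightarrow{\sigma}P_0\to X\to0$ over $A$. Since $F_A=-\otimes_Ae\Lambda$ is right exact, applying it gives a projective presentation
\[
 F_AP_1\xrightarrow{F_A\sigma}F_AP_0\to F_AX\to0
\]
over $\Lambda$. It need not be minimal, and that does not matter.

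Next we use the standard criterion: a module $Z$ satisfies $\Ext^1_\Lambda(Z,W)=0$ whenever $\Hom(Q_0,W)\to\Hom(Q_1,W)$ is surjective for some projective presentation $Q_1\to Q_0\to Z\to0$ with $Q_0\to Z$ a projective cover. The general version says that surjectivity of $\Hom(\pi,W)$ for a presentation $\pi$ implies $\Ext^1(Z,W)=0$. This follows by comparing $Q_1\to Q_0\to Z\to0$ with a short exact sequence $0\to\Omega Z\to Q_0\to Z\to0$: the map $Q_1\to\Omega Z$ is surjective, so $\Hom(\Omega Z,W)\hookrightarrow\Hom(Q_1,W)$. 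Hence any map $\Omega Z\to W$, which becomes a restriction of a map from $Q_0$ after composing with the epimorphism $Q_1\to\Omega Z$, itself extends to $Q_0$, because $Q_1\to\Omega Z$ is epic. Therefore it suffices to show that
\[
 \Hom_\Lambda(F_A\sigma,W)
\]
is surjective. By the adjunction this map identifies with $\Hom_A(\sigma,We)$.

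Since $W\in\mathcal T_{X,Y}$, we have $We\in\mathcal X=\Fac_AX$. By Theorem~\ref{thm:AIR-torsion}(3), $\Hom_A(We,\tau_AX)=0$, and Proposition~\ref{prop:AIR-criteria}(1) then shows that $\Hom_A(\sigma,We)$ is surjective. Hence $\Ext^1_\Lambda(F_AX,W)=0$.

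The only delicate step is the passage from a non-minimal presentation to vanishing of $\Ext^1$. I would handle it by the direct argument above, or alternatively by noting that $\Ext^1_\Lambda(F_AX,W)\cong\Ext^1_A(X,We)$ whenever $\mathrm{Tor}_1^A(X,e\Lambda)$ vanishes. That Tor condition need not hold, which is why the presentation argument is preferable.
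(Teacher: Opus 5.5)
Your proof is correct, but it takes a different route from the paper's.

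\textbf{The paper's route.} The paper argues with extensions. Given $0\to W\to E\to F_AX\to0$ with $W\in\mathcal T_{X,Y}$, it applies the exact restriction $(-)e$ to obtain $0\to We\to Ee\to X\to0$. This sequence splits because $\Ext^1_A(X,We)=0$, by Theorem~\ref{thm:AIR-torsion}(1) and $We\in\Fac_AX$. The paper then transports the $A$-linear section back to a $\Lambda$-linear section of the original extension through the adjunction, using that the unit $X\to(F_AX)e$ is an isomorphism.

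\textbf{Your route.} You induce a minimal projective presentation $\sigma$ of $X$ and transfer Hom-surjectivity along the adjunction. You then apply the presentation criterion of Proposition~\ref{prop:AIR-criteria}(1) on the $A$-side, together with the elementary fact that Hom-surjectivity on any projective presentation forces $\Ext^1$-vanishing.

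\textbf{Comparison.} Your argument needs only three things: right exactness of $F_A$, preservation of projectives, and naturality of the adjunction. It never uses the unit isomorphism. It also gives surjectivity of $\Hom_\Lambda(F_A\sigma,W)$ for each $W$. Combined with Lemma~\ref{lem:radical-context}(1), this is the $\tau$-form $\Hom_\Lambda(W,\tau_\Lambda F_AX)=0$ of the statement. That is the mechanism the paper uses later in Lemma~\ref{lem:tau-rigid-direct}. The paper's argument avoids projective presentations entirely and needs only the $\Ext$-form of $\tau$-rigidity on the $A$-side.

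\textbf{A correction to your closing remark.} The extension-theoretic approach does not need an isomorphism $\Ext^1_\Lambda(F_AX,W)\cong\Ext^1_A(X,We)$, so it needs no $\mathrm{Tor}$-vanishing. The paper's argument shows that the comparison map $[E]\mapsto[Ee]$ is always injective, and injectivity is all that is required. The Tor obstruction is therefore not a reason to prefer the presentation argument.
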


\begin{proof}
Let $W\in\mathcal T_{X,Y}$ and take an extension
$0\to W\to E\xrightarrow{\pi}F_AX\to0$.  Applying the exact corner
restriction $(-)e$, and identifying $(F_AX)e$ with $X$, gives
$0\to We\to Ee\xrightarrow{\pi_e}X\to0$.  Since
$We\in\mathcal X=\Fac_AX$ and $X$ is $\tau$-rigid,
$\Ext_A^1(X,We)=0$ by
Theorem~\ref{thm:AIR-torsion}\textup{(1)}.  Hence $\pi_e$
has an $A$-linear section $s_e:X\to Ee$.

Write
$\Theta_E:\Hom_\Lambda(F_AX,E)\xrightarrow{\sim}\Hom_A(X,Ee)$ for the
adjunction bijection, and let $s=\Theta_E^{-1}(s_e)$.  Naturality gives
\[
 \Theta_{F_AX}(\pi s)=\pi_e\Theta_E(s)=\pi_es_e=1_X.
\]
The adjunction unit $X\to(F_AX)e$ is an isomorphism, and under our
identification $\Theta_{F_AX}(1_{F_AX})=1_X$.  Since $\Theta_{F_AX}$ is
injective, $\pi s=1_{F_AX}$.  Thus the original extension splits.  This
proves $\Ext_\Lambda^1(F_AX,\mathcal T_{X,Y})=0$.  The proof of the other
equality is the same, using $f,B,Y$, and $F_B$.
\end{proof}

\begin{lemma}[Relative Wakamatsu lemma]\label{lem:Wakamatsu}
Let $\Gamma$ be a finite-dimensional algebra and $\mathcal T$ an
extension-closed subcategory of $\modu\Gamma$.  Suppose that
$E\in\modu\Gamma$ satisfies
\[
 \Ext^1_\Gamma(E,\mathcal T)=0.
\]
If $a:E\to K$ is a minimal left $\mathcal T$-approximation and
$K\in\mathcal T$, then
\[
 \Ext^1_\Gamma(K,\mathcal T)=0.
\]
\end{lemma}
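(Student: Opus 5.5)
Since $\mathcal T$ is extension-closed and $K\in\mathcal T$, I would follow the classical Wakamatsu argument: turn any nontrivial extension of $K$ by an object of $\mathcal T$ into a factorization that contradicts the minimality of $a$.

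Fix $T'\in\mathcal T$ and an extension
\[
 0\longrightarrow T'\xrightarrow{i}Z\xrightarrow{\pi}K\longrightarrow0 .
\]
The goal is to show it splits. Since $\mathcal T$ is extension-closed and both $T'$ and $K$ lie in $\mathcal T$, we have $Z\in\mathcal T$.

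\textbf{Step 1 (lift $a$).} Pull the extension back along $a:E\to K$. This gives an element of $\Ext^1_\Gamma(E,T')$, which vanishes by hypothesis. Hence there is $g:E\to Z$ with $\pi g=a$. (Equivalently, apply $\Hom_\Gamma(E,-)$ and use that $\Hom_\Gamma(E,Z)\to\Hom_\Gamma(E,K)$ is surjective.)

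\textbf{Step 2 (factor back through $K$).} Since $Z\in\mathcal T$ and $a$ is a left $\mathcal T$-approximation, write $g=ha$ for some $h:K\to Z$. Then $(\pi h)a=\pi g=a$. Left minimality of $a$ makes $\pi h$ an automorphism of $K$. Therefore $h(\pi h)^{-1}$ is a section of $\pi$, and the extension splits. This proves $\Ext^1_\Gamma(K,T')=0$ for all $T'\in\mathcal T$.

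The only point needing care is Step 1, where we need surjectivity of $\Hom(E,Z)\to\Hom(E,K)$. This comes from the long exact sequence for $\Hom_\Gamma(E,-)$ applied to the extension: the next term is $\Ext^1_\Gamma(E,T')=0$. Note that extension-closure of $\mathcal T$ is used exactly once, to put $Z$ in $\mathcal T$ so that the approximation property applies.
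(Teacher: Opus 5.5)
Your proof is correct and follows the paper's argument: you put the middle term in $\mathcal T$ by extension-closure, and lift $a$ through it using $\Ext^1_\Gamma(E,T')=0$. You then factor the lift back through $a$ and use left minimality of $a$ to make $\pi h$ invertible, which gives the section $h(\pi h)^{-1}$.
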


\begin{proof}
Fix $T\in\mathcal T$ and an extension
\[
 0\longrightarrow T\longrightarrow C\xrightarrow{s}K\longrightarrow0.
\]
Since $T,K\in\mathcal T$ and $\mathcal T$ is extension closed, one has
$C\in\mathcal T$.  Applying $\Hom_\Gamma(E,-)$ and using
$\Ext^1_\Gamma(E,T)=0$ shows that $a$ lifts to a map $h:E\to C$ with
$sh=a$.  Since $a$ is a left $\mathcal T$-approximation, write $h=ta$ for
some $t:K\to C$.  Consequently,
\[
 (st)a=a.
\]
The left minimality of $a$ implies that $st$ is invertible.  Hence
$t(st)^{-1}$ is a section of $s$, so the extension splits.  Therefore
$\Ext^1_\Gamma(K,T)=0$.
\end{proof}

At module level, Cao calls $P(\Fac_\Gamma K)$ the co-Bongartz completion of
a $\tau$-rigid module $K$ \cite[Definition~2.10]{Cao}; compare the classical
Bongartz completion \cite{Bongartz}.  The following result can also be obtained
from the work on completions of two-term silting complexes in
\cite[Section~5]{DerksenFei}, \cite[Proposition~2.16]{Aihara},
\cite[Lemma~4.2]{IJY}, and \cite[Lemma~2.2(b)]{Kimura}, together with the
two-term silting--support $\tau$-tilting correspondence
\cite[Theorem~3.2]{AIR}.  For completeness, we give a direct module-theoretic
proof.

\begin{proposition}[Module-level co-Bongartz formula]
\label{prop:module-coBongartz}
Let $\Gamma$ be a finite-dimensional $k$-algebra and let $K$ be a
$\tau$-rigid right $\Gamma$-module.  If
$c:\Gamma\longrightarrow K'$ is a left $\addc K$-approximation, then
\[
 G_c=K\oplus\Coker c
\]
is a support $\tau$-tilting $\Gamma$-module and
\[
 \Fac_\Gamma G_c=\Fac_\Gamma K,
 \qquad
 G_c\cong P(\Fac_\Gamma K).
\]
\end{proposition}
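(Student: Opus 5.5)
The plan is to verify, in order, that $G_c$ is $\tau$-rigid with $\Fac_\Gamma G_c=\Fac_\Gamma K$, and then to apply Proposition~\ref{prop:AIR-criteria}(4) to the exact sequence $\Gamma\xrightarrow{c}K'\to\Coker c\to0$. The identification $G_c\cong P(\Fac_\Gamma K)$ will then follow from the bijection in Theorem~\ref{thm:AIR-torsion}(2).

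\textbf{Step 1: the torsion class.} Put $\mathcal T=\Fac_\Gamma K$. By Theorem~\ref{thm:AIR-torsion}(1), $\mathcal T$ is a functorially finite torsion class and $\Ext^1_\Gamma(K,\mathcal T)=0$. Since $\Coker c$ is a quotient of $K'\in\addc K$, it lies in $\mathcal T$. Hence $\Fac_\Gamma G_c=\mathcal T$.

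\textbf{Step 2: Ext-projectivity of $\Coker c$.} Let $L=\operatorname{Im}c\subseteq K'$, so that
\[
0\to L\to K'\to \Coker c\to 0
\]
is exact. Apply $\Hom_\Gamma(-,T)$ for $T\in\mathcal T$. The portion
\[
\Hom(K',T)\to\Hom(L,T)\to\Ext^1(\Coker c,T)\to\Ext^1(K',T)=0
\]
shows that it suffices to prove surjectivity of $\Hom(K',T)\to\Hom(L,T)$. Given $g:L\to T$, I compose with the epimorphism $\Gamma\to L$. This gives a map $\Gamma\to T$, and since $\Gamma$ is projective and $T\in\Fac K$, it factors through some object of $\addc K$ mapping onto $T$. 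That map in turn factors through $c$, because $c$ is a left $\addc K$-approximation. So there is $h:K'\to T$ with $hc=g\pi$, where $\pi:\Gamma\to L$. Since $\pi$ is epic and $c=\iota\pi$, this gives $h\iota=g$, as required.

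Consequently $\Ext^1_\Gamma(G_c,\mathcal T)=0$. In particular $\Ext^1(G_c,\Fac G_c)=0$, so $G_c$ is $\tau$-rigid by Theorem~\ref{thm:AIR-torsion}(1).

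\textbf{Step 3: support $\tau$-tilting.} Next I check that $c$ is also a left $\addc G_c$-approximation. For the $K$-summands this is the hypothesis. For a map $\Gamma\to\Coker c$, projectivity of $\Gamma$ lifts it along the epimorphism $K'\to\Coker c$, and the lift then factors through $c$. Therefore the sequence $\Gamma\xrightarrow{c}K'\to\Coker c\to0$, with $K',\Coker c\in\addc G_c$, satisfies the criterion of Proposition~\ref{prop:AIR-criteria}(4), and $G_c$ is support $\tau$-tilting. Basic representatives are used throughout, per the paper's convention.

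\textbf{Step 4: identification.} Since $G_c\in\stau\Gamma$ and $\Fac G_c=\mathcal T$, Theorem~\ref{thm:AIR-torsion}(2) gives $G_c\cong P(\mathcal T)$.

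I expect the main obstacle to be Step 2, specifically the factorization through $c$ of maps that are defined only on $\operatorname{Im} c$. This is handled by pulling back to $\Gamma$ and using $\Ext^1(K',\mathcal T)=0$. Everything else is bookkeeping with the cited criteria.
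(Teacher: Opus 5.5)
Your proposal is correct and follows the paper's proof essentially step for step: the image factorization $c=\iota\pi$ combined with $\Ext^1_\Gamma(K',\mathcal T)=0$ to get Ext-projectivity of $\Coker c$, then Theorem~\ref{thm:AIR-torsion}(1) for $\tau$-rigidity, Proposition~\ref{prop:AIR-criteria}(4), and the bijection of Theorem~\ref{thm:AIR-torsion}(2). The only cosmetic difference is that the paper first records once that $c$ is a left $\Fac_\Gamma K$-approximation and reuses that fact twice, whereas you inline the same projective-lifting argument separately in Steps~2 and~3.
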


\begin{proof}
Put $\mathcal T=\Fac_\Gamma K$.  Since $K$ is $\tau$-rigid,
Theorem~\ref{thm:AIR-torsion}\textup{(1)} gives
$\Ext_\Gamma^1(K,\mathcal T)=0$, and $\mathcal T$ is a functorially finite
torsion class.

We first show that $c$ is a left $\mathcal T$-approximation.  Let
$Z\in\mathcal T$ and $\xi:\Gamma\to Z$.  Choose an epimorphism
$p:K_0\twoheadrightarrow Z$ with $K_0\in\addc K$.  Since $\Gamma$ is
projective, $\xi$ lifts to $\widetilde\xi:\Gamma\to K_0$ with
$p\widetilde\xi=\xi$.  As $c$ is a left $\addc K$-approximation,
$\widetilde\xi=hc$ for some $h:K'\to K_0$; hence $\xi=phc$.

Now $\Coker c\in\mathcal T$, since it is a factor module of
$K'\in\addc K$.  Put $I=\operatorname{Im}c$ and write $c=i\pi$, where
$\pi:\Gamma\twoheadrightarrow I$ and $i:I\hookrightarrow K'$.  Fix
$L\in\mathcal T$ and let $a:I\to L$.  Since $c$ is a left
$\mathcal T$-approximation, the map $a\pi:\Gamma\to L$ factors through
$c$; thus $a\pi=bc=bi\pi$ for some $b:K'\to L$.  As $\pi$ is epic,
$a=bi$, so
\[
 \Hom_\Gamma(K',L)\longrightarrow\Hom_\Gamma(I,L)
\]
is surjective.  Applying $\Hom_\Gamma(-,L)$ to
\[
 0\longrightarrow I\longrightarrow K'\longrightarrow
 \Coker c\longrightarrow0
\]
and using $\Ext_\Gamma^1(K',L)=0$ gives
$\Ext_\Gamma^1(\Coker c,L)=0$.  Hence $\Coker c$ is Ext-projective in
$\mathcal T$.

Thus $\Fac_\Gamma G_c=\Fac_\Gamma K=\mathcal T$ and
$\Ext_\Gamma^1(G_c,\mathcal T)=0$, so
Theorem~\ref{thm:AIR-torsion}\textup{(1)} shows that $G_c$ is
$\tau$-rigid.  Moreover,
\[
 \Gamma\xrightarrow{c}K'\longrightarrow\Coker c\longrightarrow0
\]
is exact with $K',\Coker c\in\addc G_c$.  Since $c$ is a left
$\mathcal T$-approximation and $\addc G_c\subseteq\mathcal T$, it is a
left $\addc G_c$-approximation.  Proposition~\ref{prop:AIR-criteria}
\textup{(4)} therefore shows that $G_c$ is support $\tau$-tilting.
Finally, Theorem~\ref{thm:AIR-torsion}\textup{(2)} gives
\[
 \Fac_\Gamma G_c=\Fac_\Gamma K,
 \qquad
 G_c\cong P(\Fac_\Gamma K).
\]
\end{proof}

\begin{example}[A nontrivial co-Bongartz completion]
\label{ex:completion-term}
Let $\Gamma=k(1\xrightarrow{\alpha}2)$, and write $\mathbf 1$ and
$\mathbf 2$ for the simple right $\Gamma$-modules at the corresponding
vertices.  Put
\[
 K=\genfrac{}{}{0pt}{}{\mathbf 1}{\mathbf 2}=e_1\Gamma.
\]
Then
\[
 \Fac_\Gamma K
 =\addc\left(
   \genfrac{}{}{0pt}{}{\mathbf 1}{\mathbf 2}\oplus\mathbf 1\right),
 \qquad
 P(\Fac_\Gamma K)
 =\genfrac{}{}{0pt}{}{\mathbf 1}{\mathbf 2}\oplus\mathbf 1.
\]
Thus $K$ is $\tau$-rigid and generates $\Fac_\Gamma K$, but it is not
support $\tau$-tilting.  Since
\[
 \Gamma=\genfrac{}{}{0pt}{}{\mathbf 1}{\mathbf 2}\oplus\mathbf 2,
\]
a minimal left $\addc K$-approximation is
\[
 c:
 \genfrac{}{}{0pt}{}{\mathbf 1}{\mathbf 2}\oplus\mathbf 2
 \longrightarrow
 \genfrac{}{}{0pt}{}{\mathbf 1}{\mathbf 2}
 \oplus
 \genfrac{}{}{0pt}{}{\mathbf 1}{\mathbf 2},
 \qquad
 c(x,y)=(x,\alpha y).
\]
Its cokernel is $\mathbf 1$.  Hence Proposition~\ref{prop:module-coBongartz}
gives
\[
 G_c=\genfrac{}{}{0pt}{}{\mathbf 1}{\mathbf 2}\oplus\mathbf 1
 \cong P(\Fac_\Gamma K).
\]
In particular, the cokernel term in
Proposition~\ref{prop:module-coBongartz} can contribute a genuinely new
indecomposable summand.
\end{example}

\begin{theorem}[Finite bilateral approximation gluing]
\label{thm:bilateral}
Assume that both correction ladders terminate, say at $E_X^{r_X}$ and
$E_Y^{r_Y}$.  Let
\[
 \alpha_X:F_AX\longrightarrow E_X^{r_X},
 \qquad
 \alpha_Y:F_BY\longrightarrow E_Y^{r_Y}
\]
be the corresponding composites, and let
\[
 \overline\alpha_X:F_AX\longrightarrow K_X,
 \qquad
 \overline\alpha_Y:F_BY\longrightarrow K_Y
\]
be their minimal parts.
Let $K$ be obtained from $K_X\oplus K_Y$ by deleting repeated isomorphic
indecomposable summands.  Then:
\begin{enumerate}[label=\textup{(\arabic*)},leftmargin=2.2em]
 \item One has
 \[
  \Ext^1_\Lambda(K_X\oplus K_Y,\mathcal T_{X,Y})=0,
  \qquad
  \Fac_\Lambda(K_X\oplus K_Y)=\mathcal T_{X,Y}.
 \]
 \item The module $K$ is $\tau$-rigid, and $\mathcal T_{X,Y}$ is
 functorially finite.
 \item For any left $\addc K$-approximation $c:\Lambda\to K'$, one has
 \[
  G_{\mathrm{bi}}(X,Y):=K\oplus\Coker c
  \cong P(\mathcal T_{X,Y}).
 \]
 Thus the resulting basic module is independent of the choice of $c$ up
 to isomorphism.  In particular, $G_{\mathrm{bi}}(X,Y)$ is support
 $\tau$-tilting and
 $\Fac_\Lambda G_{\mathrm{bi}}(X,Y)=\mathcal T_{X,Y}$.
\end{enumerate}
\end{theorem}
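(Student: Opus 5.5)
The plan is to show that the two composites $\alpha_X,\alpha_Y$ are minimal left $\mathcal T_{X,Y}$-approximations up to a zero part, and then to apply the relative Wakamatsu lemma and the module-level co-Bongartz formula.

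\textbf{Step 1: each ladder yields a left $\mathcal T_{X,Y}$-approximation.} Consider a ladder $E^0\to E^1\to\cdots\to E^r$, where $E^r\in\mathcal T_{X,Y}$. We show by induction along the ladder that every morphism $h:E^0\to Z$ with $Z\in\mathcal T_{X,Y}$ factors through the composite $E^0\to E^i$ for each $i$. At each step the correction $\rho$ is a left $\mathcal C_{\mathcal X}$- or $\mathcal C_{\mathcal Y}$-approximation by Lemma~\ref{lem:elementary-correction}. Since $Z$ lies in $\mathcal T_{X,Y}=\mathcal C_{\mathcal X}\cap\mathcal C_{\mathcal Y}$, which is contained in both classes, a map $E^i\to Z$ factors through $\rho^{E^i}$. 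Hence $\alpha_X$ and $\alpha_Y$ are left $\mathcal T_{X,Y}$-approximations with targets in $\mathcal T_{X,Y}$. By the Krull--Schmidt remark after Definition~\ref{def:approximations}, the minimal parts $\overline\alpha_X,\overline\alpha_Y$ are minimal left $\mathcal T_{X,Y}$-approximations, and $K_X,K_Y$ are direct summands of the terminal terms, so they lie in $\mathcal T_{X,Y}$.

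\textbf{Step 2: Ext-projectivity and generation.} Lemma~\ref{lem:corner-Ext-projective} gives $\Ext^1_\Lambda(F_AX,\mathcal T_{X,Y})=0$ and $\Ext^1_\Lambda(F_BY,\mathcal T_{X,Y})=0$. Since $\mathcal T_{X,Y}$ is a torsion class, it is extension closed. Lemma~\ref{lem:Wakamatsu} applied to $\overline\alpha_X$ and $\overline\alpha_Y$ therefore gives $\Ext^1_\Lambda(K_X\oplus K_Y,\mathcal T_{X,Y})=0$.

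For generation, $\Fac_\Lambda(K_X\oplus K_Y)\subseteq\mathcal T_{X,Y}$ holds because $\mathcal T_{X,Y}$ is closed under factor modules. For the reverse inclusion, take $W\in\mathcal T_{X,Y}$. By Lemma~\ref{lem:relative-covers} applied to $We\in\Fac_AX$, there is an epimorphism $X_0\to We$ with $X_0\in\addc X$. Its adjoint $F_AX_0\to W$ factors through $\alpha_X^{\oplus}$, since $W\in\mathcal T_{X,Y}$ and $\alpha_X$ is a left approximation. Likewise, a map $F_BY_0\to W$ adjoint to an epimorphism $Y_0\to Wf$ factors through $\alpha_Y^{\oplus}$. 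The sum $F_AX_0\oplus F_BY_0\to W$ is surjective on both the $e$- and $f$-components, hence epic. It factors through an object of $\addc(K_X\oplus K_Y)$, because the zero parts contribute nothing. So $W\in\Fac_\Lambda(K_X\oplus K_Y)$, which proves (1).

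\textbf{Step 3: conclusion.} By (1), $\Ext^1_\Lambda(K,\Fac_\Lambda K)=0$, so $K$ is $\tau$-rigid by Theorem~\ref{thm:AIR-torsion}(1). The same result shows that $\Fac_\Lambda K=\mathcal T_{X,Y}$ is functorially finite, giving (2). Proposition~\ref{prop:module-coBongartz} applied to $K$ then gives (3): $K\oplus\Coker c\cong P(\mathcal T_{X,Y})$ is support $\tau$-tilting with $\Fac_\Lambda$ equal to $\mathcal T_{X,Y}$. Since $P(\mathcal T_{X,Y})$ does not depend on $c$, the result is independent of $c$ up to isomorphism.

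\textbf{Main obstacle.} The delicate point is Step 1, specifically the claim that the minimal part of $\alpha_X$ is left \emph{minimal}. Composites of minimal approximations need not be minimal in general, and this is why the statement splits off the zero component. I would handle it by invoking the Krull--Schmidt decomposition of an arbitrary left approximation into a minimal part and a zero part, rather than trying to prove that the composite itself is minimal. I would also check carefully that the Wakamatsu hypothesis $K\in\mathcal T_{X,Y}$ holds. It does, because $K_X$ and $K_Y$ are direct summands of the terminal terms and $\mathcal T_{X,Y}$ is closed under summands.
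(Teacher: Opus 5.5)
Your proposal is correct and follows essentially the same route as the paper. You iterate the approximation property of Lemma~\ref{lem:elementary-correction} along each ladder, pass to minimal parts, and combine Lemma~\ref{lem:corner-Ext-projective} with the relative Wakamatsu lemma. You prove generation by factoring the adjoints of corner epimorphisms through the approximations, and you finish with Theorem~\ref{thm:AIR-torsion}(1) and Proposition~\ref{prop:module-coBongartz}. The only cosmetic difference is in the generation step: you factor through $\alpha_X^{\oplus}$ and then discard the zero part, whereas the paper factors directly through $\overline\alpha_X^{\oplus r}$. Your appeal to Lemma~\ref{lem:relative-covers} there is also unnecessary, since any epimorphism from an object of $\addc X$ suffices.
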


\begin{proof}
\textup{(1)} By Lemma~\ref{lem:elementary-correction}, every arrow in the
two correction ladders is respectively a left
$\mathcal C_{\mathcal X}$- or left
$\mathcal C_{\mathcal Y}$-approximation.  Since
$\mathcal T_{X,Y}\subseteq\mathcal C_{\mathcal X}\cap
\mathcal C_{\mathcal Y}$, every morphism from a term of either ladder to
an object of $\mathcal T_{X,Y}$ factors through the next arrow.  Iterating
this factorization shows that the terminating composites $\alpha_X$ and
$\alpha_Y$ are left $\mathcal T_{X,Y}$-approximations.  Their terminal
targets lie in $\mathcal T_{X,Y}$ by definition of termination.  Passing
to the minimal parts preserves the approximation property, and
$K_X,K_Y\in\mathcal T_{X,Y}$ because a torsion class is closed under
direct summands.  Hence $\overline\alpha_X$ and
$\overline\alpha_Y$ are minimal left $\mathcal T_{X,Y}$-approximations.
Lemma~\ref{lem:corner-Ext-projective} and the relative Wakamatsu lemma,
Lemma~\ref{lem:Wakamatsu}, give
\[
 \Ext^1_\Lambda(K_X,\mathcal T_{X,Y})=0
 =\Ext^1_\Lambda(K_Y,\mathcal T_{X,Y}),
\]
and therefore
$\Ext^1_\Lambda(K_X\oplus K_Y,\mathcal T_{X,Y})=0$.

It remains to show that $K_X\oplus K_Y$ generates $\mathcal T_{X,Y}$.
Let $W=(U,V,u,v)\in\mathcal T_{X,Y}$.  Since
$U\in\Fac_AX$ and $V\in\Fac_BY$, choose epimorphisms
\[
 \alpha:X^{\oplus r}\twoheadrightarrow U,
 \qquad
 \beta:Y^{\oplus s}\twoheadrightarrow V.
\]
Under the adjunctions $F_A\dashv(-)e$ and $F_B\dashv(-)f$, they correspond
to morphisms
\[
 \widetilde\alpha:F_AX^{\oplus r}\longrightarrow W,
 \qquad
 \widetilde\beta:F_BY^{\oplus s}\longrightarrow W,
\]
whose components are
\begin{align*}
 \widetilde\alpha_e&=\alpha,
 &\widetilde\alpha_f&=u(\alpha\otimes1_N),\\
 \widetilde\beta_e&=v(\beta\otimes1_M),
 &\widetilde\beta_f&=\beta.
\end{align*}
Thus the $A$-component of the row morphism
$\bigl[\,\widetilde\alpha\quad\widetilde\beta\,\bigr]$ is
\[
 \bigl[\,\alpha\quad v(\beta\otimes1_M)\,\bigr]:
 X^{\oplus r}\oplus(Y^{\oplus s}\otimes_BM)\longrightarrow U,
\]
which is epic because its restriction to $X^{\oplus r}$ is $\alpha$.
Similarly, its $B$-component is
\[
 \bigl[\,u(\alpha\otimes1_N)\quad\beta\,\bigr]:
 (X^{\oplus r}\otimes_AN)\oplus Y^{\oplus s}\longrightarrow V,
\]
which is epic because its restriction to $Y^{\oplus s}$ is $\beta$.
Since exactness in $\modu\Lambda$ is detected componentwise,
$\bigl[\,\widetilde\alpha\quad\widetilde\beta\,\bigr]$ is an epimorphism.

The maps $\overline\alpha_X^{\oplus r}$ and
$\overline\alpha_Y^{\oplus s}$ are again left
$\mathcal T_{X,Y}$-approximations.  Hence $\widetilde\alpha$ and
$\widetilde\beta$ factor through them, so there is a morphism
\[
 g:K_X^{\oplus r}\oplus K_Y^{\oplus s}\longrightarrow W
\]
such that
\[
 g(\overline\alpha_X^{\oplus r}\oplus
   \overline\alpha_Y^{\oplus s})
 =\bigl[\,\widetilde\alpha\quad\widetilde\beta\,\bigr].
\]
The right-hand side is epic, hence $g$ is epic.  Therefore every object of
$\mathcal T_{X,Y}$ belongs to
$\Fac_\Lambda(K_X\oplus K_Y)$.  The reverse inclusion follows from
$K_X,K_Y\in\mathcal T_{X,Y}$ and the fact that $\mathcal T_{X,Y}$ is a
torsion class.  This proves
\[
 \Fac_\Lambda(K_X\oplus K_Y)=\mathcal T_{X,Y},
\]
and hence \textup{(1)}.

\textup{(2)} By construction,
$\addc K=\addc(K_X\oplus K_Y)$.  Part~\textup{(1)} therefore gives
\[
 \Fac_\Lambda K=\mathcal T_{X,Y},
 \qquad
 \Ext^1_\Lambda(K,\Fac_\Lambda K)=0.
\]
Theorem~\ref{thm:AIR-torsion}\textup{(1)} shows that $K$ is
$\tau$-rigid and that $\mathcal T_{X,Y}$ is functorially finite.  This
proves \textup{(2)}.

\textup{(3)} Let $c:\Lambda\to K'$ be any left
$\addc K$-approximation.  Proposition~\ref{prop:module-coBongartz}, applied
to the $\tau$-rigid module $K$, gives
\[
 K\oplus\Coker c
 \cong P(\Fac_\Lambda K)
 =P(\mathcal T_{X,Y}).
\]
Thus $G_{\mathrm{bi}}(X,Y)$ is support $\tau$-tilting, its isomorphism
class is independent of the chosen left $\addc K$-approximation, and
$\Fac_\Lambda G_{\mathrm{bi}}(X,Y)=\mathcal T_{X,Y}$.  This proves
\textup{(3)}.
\end{proof}

\begin{remark}[Choice of the completion map]
\label{rem:completion-map}
Both Proposition~\ref{prop:module-coBongartz} and
Theorem~\ref{thm:bilateral} allow an arbitrary left
$\addc K$-approximation.  For computations one may take a minimal one,
say $c_{\min}:\Gamma\to K_{\min}$.  There is also a canonical choice.
Give $K\otimes_kDK$ the right $\Gamma$-action on the first tensor factor.
For a basis $(z_i)_{i=1}^d$ of $K$ with dual basis
$(z_i^*)_{i=1}^d$, set
\begin{equation}\label{eq:canonical-completion-map}
 \eta_K:\Gamma\longrightarrow K\otimes_kDK,
 \qquad
 \eta_K(\gamma)=\sum_{i=1}^d z_i\gamma\otimes z_i^*.
\end{equation}
Under the canonical isomorphism
\[
 \Phi:K\otimes_kDK\longrightarrow\operatorname{End}_k(K),
 \qquad
 \Phi(x\otimes\xi)(y)=\xi(y)x,
\]
the element $\varepsilon_K:=\sum_i z_i\otimes z_i^*$ corresponds to
$1_K$, since
$\Phi(\varepsilon_K)(y)=\sum_i z_i^*(y)z_i=y$.  Hence
$\varepsilon_K=\Phi^{-1}(1_K)$, so it is independent of the chosen basis;
consequently $\eta_K(\gamma)=\varepsilon_K\gamma$ is basis-independent as
well.  Moreover,
$K\otimes_kDK\cong K^{\oplus d}$ as a right $\Gamma$-module.

If $h:\Gamma\to K$ and $y=h(1)$, then
$\theta_y(x\otimes\xi)=\xi(y)x$ is $\Gamma$-linear and satisfies
$\theta_y\eta_K=h$.  Hence $\eta_K$ is a left
$\addc K$-approximation.  By the standard decomposition of approximations
recalled after Definition~\ref{def:approximations},
$\eta_K\cong(c_{\min},0)$ after an isomorphism of targets.  Thus
\[
 \Coker\eta_K\cong\Coker c_{\min}\oplus K_0
\]
for some $K_0\in\addc K$, and the minimal and canonical choices give the
same basic completion under our standing convention.
\end{remark}

\begin{example}[Finite genuinely bilateral correction]
\label{ex:bilateral-finite}
Let $\Lambda=kQ$, where
\[
\begin{tikzcd}[column sep=4em,row sep=2em]
 1 \arrow[r,"\alpha"] \arrow[d,"n"'] & 2 \\
 4 & 3 \arrow[l,"\beta"] \arrow[r,"m"] & 5
\end{tikzcd}
\]
Put $e=\varepsilon_1+\varepsilon_2+\varepsilon_5$ and
$f=\varepsilon_3+\varepsilon_4$.  Then $A=e\Lambda e$ is the path algebra
of $1\xrightarrow{\alpha}2$ together with the isolated vertex $5$, whereas
$B=f\Lambda f=k(3\xrightarrow{\beta}4)$.  Moreover,
\[
 N=e\Lambda f=kn,
 \qquad M=f\Lambda e=km,
 \qquad N\otimes_BM=0=M\otimes_AN.
\]
Write $\mathbf i$ for the simple at vertex $i$, and set
\[
 U=\genfrac{}{}{0pt}{}{\mathbf1}{\mathbf2},
 \qquad
 V=\genfrac{}{}{0pt}{}{\mathbf3}{\mathbf4},
 \qquad
 X=U\oplus\mathbf1,
 \qquad
 Y=V\oplus\mathbf3.
\]
Since vertex $5$ is isolated, $\mathbf5$ is projective and
$(X,\mathbf5)$ is a support $\tau$-tilting pair over $A$; moreover, $Y$ is
$\tau$-tilting over $B$.  Thus
\[
 \mathcal X=\Fac_AX=\addc(U\oplus\mathbf1),
 \qquad
 \mathcal Y=\Fac_BY=\addc(V\oplus\mathbf3).
\]
On the other hand,
\[
 X\otimes_AN\cong\mathbf4^{\oplus2}\notin\mathcal Y,
 \qquad
 Y\otimes_BM\cong\mathbf5^{\oplus2}\notin\mathcal X.
\]
Hence neither one-sided compatibility condition holds.

The minimal corner approximations needed by the two ladders are
\[
 \mathbf4^{\oplus2}\lhook\joinrel\longrightarrow V^{\oplus2},
 \qquad
 X\oplus\mathbf5^{\oplus2}
   \xrightarrow{[\,1_X\ \ 0\,]}X,
 \qquad
 \mathbf5^{\oplus2}\longrightarrow0.
\]
The resulting correction terms are displayed below.  The middle columns
list only the two corner components; their structure maps are the maps
produced by the pushouts.
\[
\begin{array}{c|c|c|c}
 \text{term}&e\text{-component}&f\text{-component}&\text{next step}
 \rule{0pt}{2.6ex}\\ \hline
 E_X^0&X&\mathbf4^{\oplus2}&\mathcal Y\text{-correction}
 \rule{0pt}{3.1ex}\\
 E_X^1&X\oplus\mathbf5^{\oplus2}&V^{\oplus2}
      &\mathcal X\text{-correction}\rule{0pt}{3.1ex}\\
 E_X^2&X&V^{\oplus2}&\text{terminal}\rule{0pt}{3.1ex}\\ \hline
 E_Y^0&\mathbf5^{\oplus2}&Y&\mathcal X\text{-correction}
 \rule{0pt}{3.1ex}\\
 E_Y^1&0&Y&\text{terminal}\rule{0pt}{3.1ex}
\end{array}
\]
Indeed, in the first ladder the common source of the first pushout is
$\mathbf4^{\oplus2}\otimes_BM=0$, so the new $A$-component is
$X\oplus(V^{\oplus2}\otimes_BM)\cong X\oplus\mathbf5^{\oplus2}$.
The next approximation projects away this new summand, and its tensor with
$N$ is the identity on $X\otimes_AN$; hence the next pushout leaves the
$B$-component $V^{\oplus2}$ unchanged.  In $E_X^1$ the $n$-map is the
radical inclusion $\mathbf4^{\oplus2}\hookrightarrow V^{\oplus2}$ and the
$m$-map identifies $V^{\oplus2}\otimes_BM$ with the new
$\mathbf5^{\oplus2}$-summand; after the second correction the $m$-map is
zero.  For the second ladder,
$\mathbf5^{\oplus2}\otimes_AN=0$ and its minimal left
$\mathcal X$-approximation is zero, giving $E_Y^1=(0,Y)$.

Thus the $X$-ladder terminates after two corrections and the $Y$-ladder
after one.  This is a genuinely bilateral instance of
Theorem~\ref{thm:bilateral}: both one-sided hypotheses fail, but both
alternating ladders terminate.

The resulting support $\tau$-tilting module is also explicit.  Let $L_U$
and $L_{\mathbf1}$ be the $\Lambda$-modules whose $e$-components are $U$
and $\mathbf1$, respectively, whose $f$-components are both $V$, and whose
$n$-maps are the radical inclusions $\mathbf4\hookrightarrow V$; their
$m$-maps are zero.  Then
\[
 E_X^2\cong L_U\oplus L_{\mathbf1},
 \qquad
 E_Y^1\cong V\oplus\mathbf3,
\]
where the two modules on the right in the second isomorphism are regarded
as $\Lambda$-modules supported on the $f$-corner.  Under these
decompositions, a direct endomorphism check shows that the terminal
composites are minimal.  Hence the module $K$ in
Theorem~\ref{thm:bilateral} is
\[
 K=L_U\oplus L_{\mathbf1}\oplus V\oplus\mathbf3.
\]
The four summands are indecomposable and have distinct dimension vectors.
By Theorem~\ref{thm:bilateral}, $K$ is $\tau$-rigid and
\[
 \Fac_\Lambda K=\mathcal T_{X,Y}.
\]
Moreover, $K$ is zero at vertex $5$, so
$\Hom_\Lambda(\mathbf5,K)=0$.  Since $\mathbf5=e_5\Lambda$ is projective,
$\Lambda$ has five simple modules, and $K$ has four indecomposable summands,
$(K,\mathbf5)$ is a support $\tau$-tilting pair.  Consequently, the
completion adds no new basic summand,
and
\[
 G_{\mathrm{bi}}(X,Y)\cong K.
\]
\end{example}

\subsection{The limitation of finite termination}

We first recall the order-theoretic facts used in this subsection.  Let $P$
be a poset and let $S\subseteq P$.  A \emph{meet} of $S$, if it exists, is
its greatest lower bound; dually, a \emph{join} of $S$ is its least upper
bound.  The poset $P$ is a \emph{lattice} if every finite subset has both a
meet and a join \cite[Definition~2.2]{IRTT}.

For a finite-dimensional algebra $C$, let
$\mathrm{f\text{-}tors}\,C$ be the poset of functorially finite torsion
classes in $\modu C$, ordered by inclusion.  It has least element $0$ and
greatest element $\modu C$.  Hence it is a lattice precisely when every pair
of its elements has both a meet and a join.  If
$\mathcal U,\mathcal V\in\mathrm{f\text{-}tors}\,C$ and
$\mathcal U\cap\mathcal V$ is functorially finite, then this intersection is
their meet.  The join, when it exists, is the least functorially finite
torsion class containing both $\mathcal U$ and $\mathcal V$; it need not be
their set-theoretic union.

The lattice problem for functorially finite torsion classes was studied by
Iyama, Reiten, Thomas and Todorov.  For the path algebra $kQ$ of a finite
connected acyclic quiver over an algebraically closed field, they proved that
$\mathrm{f\text{-}tors}\,kQ$ is a lattice precisely when $Q$ is Dynkin or has
at most two vertices \cite[Theorem~1.3]{IRTT}.  Ringel subsequently extended
this result to connected hereditary Artin algebras: for such an algebra $H$,
$\mathrm{f\text{-}tors}\,H$ is a lattice precisely when $H$ is
representation-finite or has exactly two simple modules
\cite[Theorem]{Ringel}.  We use Ringel's version below.

We now show that one cannot remove all conditions while preserving the
original componentwise class.

\begin{proposition}[Obstruction to unconditional termination]
\label{prop:obstruction}
For every field $k$, there exist a finite-dimensional Morita context
$k$-algebra $\Lambda$ and support $\tau$-tilting corner modules $X,Y$ for
which
$\mathcal T_{X,Y}$ is not functorially finite.  Consequently, no
condition-free finite bilateral construction can always produce a support
$\tau$-tilting module $G$ satisfying
$\Fac_\Lambda G=\mathcal T_{X,Y}$.
\end{proposition}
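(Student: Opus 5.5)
The plan is to reduce to Ringel's non-lattice theorem by choosing a Morita context that is Morita equivalent to a single hereditary algebra, so that $\mathcal T_{X,Y}$ becomes the intersection of two functorially finite torsion classes. Fix a connected hereditary finite-dimensional $k$-algebra $H$ that is representation-infinite and has at least three simple modules. For instance, take $H=kQ$ with $Q$ the quiver $1\rightrightarrows2\to3$. By \cite[Theorem]{Ringel}, $\mathrm{f\text{-}tors}\,H$ is not a lattice, so some pair $\mathcal U,\mathcal V$ lacks a meet or a join. Since $\mathrm{f\text{-}tors}\,H^{\mathrm{op}}$ is also not a lattice, we may replace $H$ by $H^{\mathrm{op}}$ if necessary and reduce to the case of a missing meet. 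This uses the anti-isomorphisms $\mathcal T\mapsto\mathcal T^{\perp}$ from f-tors to f-torf and $D$ from f-torf$\,H$ to $\mathrm{f\text{-}tors}\,H^{\mathrm{op}}$, and $H^{\mathrm{op}}$ is again connected, hereditary, representation-infinite with the same number of simples. Under these anti-isomorphisms a missing join becomes a missing meet. So we fix $\mathcal U,\mathcal V\in\mathrm{f\text{-}tors}\,H$ with no meet. As recalled before the statement, if $\mathcal U\cap\mathcal V$ were functorially finite it would be the meet. Hence $\mathcal U\cap\mathcal V$ is not functorially finite.

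Next take $A=B=H$, $N=M={}_HH_H$, and let $\phi,\psi$ be multiplication $H\otimes_HH\to H$. The identities \eqref{eq:context-identities} are associativity, and $\Lambda\cong M_2(H)$. Put $X=P(\mathcal U)$ and $Y=P(\mathcal V)$; these lie in $\stau H$ by Theorem~\ref{thm:AIR-torsion}(2). Let $W=(U,V,u,v)$ be a $\Lambda$-module. Here $u:U\to V$ and $v:V\to U$ after the identifications $U\otimes_HH=U$ and $V\otimes_HH=V$, and \eqref{eq:module-one}--\eqref{eq:module-two} say exactly $vu=1_U$ and $uv=1_V$. So $u$ is an $H$-isomorphism and $Wf\cong We$. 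The restriction $(-)e:\modu\Lambda\to\modu H$ is an equivalence, since $\Lambda e\Lambda=\Lambda$; concretely, $W\cong F_A(We)$. Under this equivalence, $\mathcal T_{X,Y}$ corresponds to $\{U\mid U\in\mathcal U,\ U\cong V\in\mathcal V\}=\mathcal U\cap\mathcal V$, using closure under isomorphisms.

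Functorial finiteness is preserved by equivalences of categories, so $\mathcal T_{X,Y}$ is not functorially finite. For the consequence, suppose some support $\tau$-tilting $G$ had $\Fac_\Lambda G=\mathcal T_{X,Y}$. Then Theorem~\ref{thm:AIR-torsion}(1),(2) would force $\mathcal T_{X,Y}$ to be functorially finite, a contradiction. The only step I expect to need care is the duality reduction from ``no join'' to ``no meet''. There one must check that the anti-isomorphisms really send f-tors to f-torf and f-torf to $\mathrm{f\text{-}tors}\,H^{\mathrm{op}}$, and that $H^{\mathrm{op}}$ still satisfies Ringel's hypotheses. Everything else, namely the Morita equivalence and the translation of $\mathcal T_{X,Y}$ into $\mathcal U\cap\mathcal V$, is a direct check from the module identities.
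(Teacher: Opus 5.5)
Your proposal is correct and follows essentially the same route as the paper: Ringel's theorem, with the anti-isomorphism $\mathcal T\mapsto D(\mathcal T^{\perp})$ turning a missing join into a missing meet over $H^{\mathrm{op}}$, gives a pair whose intersection is not functorially finite. That intersection is then realized as $\mathcal T_{X,Y}$ over $M_2(H)$ through the Morita equivalence. The only differences are cosmetic: you use a wild quiver instead of a Euclidean one, and you derive $vu=1$, $uv=1$ directly from the module identities where the paper uses the matrix units $e_{12},e_{21}$.
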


\begin{proof}
Let $Q$ be a connected acyclic Euclidean quiver with at least three
vertices, and put $H=kQ$.  Then $H$ is a connected representation-infinite
hereditary Artin algebra with at least three simple modules.  By Ringel's
main theorem, $\mathrm{f\text{-}tors}\,H$ is not a lattice
\cite[Theorem]{Ringel}.  Hence some pair in
$\mathrm{f\text{-}tors}\,H$ has no meet or no join.

If there exist $\mathcal U,\mathcal V\in\mathrm{f\text{-}tors}\,H$ having
no meet, set $C=H$, $\mathcal X=\mathcal U$ and
$\mathcal Y=\mathcal V$.  Otherwise, there exist
$\mathcal U,\mathcal V\in\mathrm{f\text{-}tors}\,H$ having no join.  The
$k$-duality, applied to the torsion pair
$(\mathcal T,\mathcal T^{\perp})$, gives an order anti-isomorphism
\[
 \Psi_H:\mathrm{f\text{-}tors}\,H
 \longrightarrow \mathrm{f\text{-}tors}\,H^{\mathrm{op}},
 \qquad
 \mathcal T\longmapsto D(\mathcal T^{\perp}),
\]
where
$\mathcal T^{\perp}=\{L\mid\Hom_H(\mathcal T,L)=0\}$.
Finite-dimensional duality and the two halves of a torsion pair give this
anti-isomorphism; functorial finiteness passes between the two halves by
\cite[Proposition~2.1(a),(b)]{Jasso}.  Its inverse is
$\mathcal S\mapsto{}^{\perp}D\mathcal S$, so no algebraic-closure hypothesis
is involved.  Set $C=H^{\mathrm{op}}$,
$\mathcal X=\Psi_H(\mathcal U)$ and
$\mathcal Y=\Psi_H(\mathcal V)$.  These two classes have no meet: otherwise,
the inverse image of their meet under $\Psi_H$ would be the join of
$\mathcal U$ and $\mathcal V$.  Thus, in either case,
$\mathcal X\cap\mathcal Y$ is not functorially finite: otherwise it would be
the meet of $\mathcal X$ and $\mathcal Y$ by the preceding paragraph.
By Theorem~\ref{thm:AIR-torsion}\textup{(2)}, choose support
$\tau$-tilting $C$-modules $X$ and $Y$ such that
\[
 \mathcal X=\Fac_CX,
 \qquad
 \mathcal Y=\Fac_CY.
\]

Take the strict Morita context
\[
 \Lambda=M_2(C),\qquad A=B=C,\qquad M=N=C,
\]
with both connecting maps given by multiplication.  Under the standard
Morita equivalence $\modu M_2(C)\simeq\modu C$, the two corner restrictions
of a $\Lambda$-module are naturally isomorphic to the same $C$-module.
Indeed, right multiplication by the matrix units $e_{12}$ and $e_{21}$
gives mutually inverse maps between the $e_{11}$- and $e_{22}$-corners.
Therefore $\mathcal T_{X,Y}$ corresponds precisely to
$\mathcal X\cap\mathcal Y$ and is not functorially finite.  If a support
$\tau$-tilting module $G$ had
$\Fac_\Lambda G=\mathcal T_{X,Y}$, the latter would be functorially finite,
a contradiction.  Since $\mathcal T_{X,Y}$ is not functorially finite,
Theorem~\ref{thm:bilateral} implies that the two correction ladders cannot
both terminate for this pair.
\end{proof}

The following example shows that the finite termination hypothesis in
Theorem~\ref{thm:bilateral} cannot be omitted even under the strong condition
\[
 N\otimes_B M=0=M\otimes_A N.
\]

We recall the Euclidean-quiver facts used in the next example.  Let $Q'$ be
an acyclic Euclidean quiver, let $\delta_{Q'}$ be the minimal positive vector
spanning the radical of its Tits form, and write
\[
 \langle x,y\rangle_{Q'}
 =\sum_{i\in Q'_0}x_iy_i-
   \sum_{a:i\to j\text{ in }Q'}x_iy_j
\]
for the Euler form.  The \emph{defect} of a representation $W$ is
\[
 \partial_{Q'}(W)=
 \langle\delta_{Q'},\underline{\dim}W\rangle_{Q'}.
\]
The Dlab--Ringel defect criterion says that an indecomposable representation
is preprojective, regular, or preinjective according as its defect is negative,
zero, or positive.  Moreover, the regular components form a separating family
of stable tubes: distinct tubes are Hom- and Ext-orthogonal, and there are no
nonzero maps from a preinjective module to a regular module.  These facts hold
for tame hereditary Artin algebras, hence for $kQ'$ over an arbitrary field; see
\cite[Sections~1, 3 and~6]{DlabRingel} and
\cite[Section~3, Case~1]{Ringel}.

\begin{example}[Nontermination with vanishing cross tensor products]
\label{ex:zero-products}
Over the arbitrary ground field $k$, let
\[
 A=k(1\xrightarrow{\alpha}2),
 \qquad
 B=k(3\xrightarrow{\beta}4),
\]
and write $\varepsilon_i$ for the vertex idempotent at $i$ in the
corresponding corner algebra.  Let ${}_AN_B=kn$ and ${}_BM_A=km$ be the
one-dimensional bimodules determined by
\[
 \varepsilon_1n=n=n\varepsilon_4,
 \qquad
 \varepsilon_3m=m=m\varepsilon_2,
\]
with every other idempotent action and every radical action equal to zero.
The support idempotents on the two sides do not match, and hence
\[
 n\otimes m=n\varepsilon_4\otimes m
   =n\otimes\varepsilon_4m=0,
 \qquad
 m\otimes n=m\varepsilon_2\otimes n
   =m\otimes\varepsilon_2n=0.
\]
Thus $N\otimes_BM=0=M\otimes_AN$.  Take $\phi=0=\psi$.  The resulting
Morita context algebra is the path algebra $\Lambda=kQ$, where
\[
\begin{tikzcd}[column sep=4em,row sep=2em]
 1 \arrow[r,"\alpha"] \arrow[d,"n"'] & 2 \\
 4 & 3 \arrow[l,"\beta"] \arrow[u,"m"']
\end{tikzcd}
\]
In particular, $Q$ is a bipartite Euclidean quiver of type
$\widetilde A_{2,2}$ (with underlying diagram $\widetilde A_3$); indeed, every
arrow goes from $\{1,3\}$ to $\{2,4\}$, so $J(\Lambda)^2=0$.

Write $\mathbf i$ for the simple module at vertex $i$, viewed over the
appropriate corner algebra, and set
\[
 U=\genfrac{}{}{0pt}{}{\mathbf 1}{\mathbf 2},
 \qquad
 V=\genfrac{}{}{0pt}{}{\mathbf 3}{\mathbf 4}.
\]
Thus $U=\varepsilon_1A$ and $V=\varepsilon_3B$ are the indecomposable
projective modules in the two corners.  Put
\[
 X=\genfrac{}{}{0pt}{}{\mathbf 1}{\mathbf 2}\oplus\mathbf 1,
 \qquad
 Y=\genfrac{}{}{0pt}{}{\mathbf 3}{\mathbf 4}\oplus\mathbf 3.
\]
Since $A$ and $B$ are both of Dynkin type $A_2$, $X$ is a support
$\tau$-tilting $A$-module and $Y$ is a support $\tau$-tilting $B$-module.
Moreover,
\[
 \mathcal X=\Fac_AX
 =\addc\left(
   \genfrac{}{}{0pt}{}{\mathbf 1}{\mathbf 2}\oplus\mathbf 1
 \right),
 \qquad
 \mathcal Y=\Fac_BY
 =\addc\left(
   \genfrac{}{}{0pt}{}{\mathbf 3}{\mathbf 4}\oplus\mathbf 3
 \right).
\]
An $A$-module belongs to $\mathcal X$ precisely when its linear map along
$\alpha$ is surjective; this is just the assertion that it has no direct
summand isomorphic to $\mathbf 2$.  The analogous description holds for
$\mathcal Y$, with excluded simple $\mathbf 4$.
Consequently, $\mathcal T_{X,Y}$ consists exactly of the representations of
$Q$ for which the maps along both $\alpha$ and $\beta$ are surjective.

\textup{(1)} We first prove, still over an arbitrary field, that this torsion
class is not functorially finite.  Regard $U$ and $V$ as $\Lambda$-modules by
extending them by zero to the other two vertices, and put
$P_i=\varepsilon_i\Lambda$.  Here $\delta_Q=(1,1,1,1)$, and hence
\[
 \partial_Q(W)=
 \dim_kW_1+\dim_kW_3-\dim_kW_2-\dim_kW_4.
\]
In particular, $\partial_Q(U)=0=\partial_Q(V)$.  Their minimal projective
presentations are
\[
 0\longrightarrow P_4\xrightarrow{\,\varepsilon_4\mapsto n\,}P_1
   \longrightarrow U\longrightarrow0,
 \qquad
 0\longrightarrow P_2\xrightarrow{\,\varepsilon_2\mapsto m\,}P_3
   \longrightarrow V\longrightarrow0.
\]
After applying $\Hom_\Lambda(-,\Lambda)$, the two relevant maps have images
$kn\subseteq\Lambda\varepsilon_4$ and
$km\subseteq\Lambda\varepsilon_2$.  Therefore
\[
 \tau U=D(\Lambda\varepsilon_4/kn)\cong V,
 \qquad
 \tau V=D(\Lambda\varepsilon_2/km)\cong U.
\]
The same presentations give
\[
 \Ext^1_\Lambda(U,V)\cong k
 \cong\Ext^1_\Lambda(V,U).
\]
The corresponding non-split extensions have one-dimensional spaces at all
four vertices.  In the first, $\alpha,\beta,n$ are identities and $m=0$;
in the second, $\alpha,\beta,m$ are identities and $n=0$.  Each middle term
is indecomposable, since an endomorphism is multiplication by the same scalar
at every vertex.  These are therefore the almost split sequences ending in
$U$ and $V$.

Thus $U$ and $V$ are regular.  The preceding calculation places them at the
mouth of the same stable tube, and
$\tau U\cong V$, $\tau V\cong U$ show that its rank is $2$.  Let
$\mathcal R$ denote the full additive subcategory of this tube.  Its standard
serial structure gives
\[
 \mathcal R=\operatorname{Filt}(U,V);
\]
viewed as a length category, its simple objects are $U$ and $V$.  Write
$\ell_{\mathcal R}$ for Loewy length in this category.  It is nonincreasing
under quotients and
$\ell_{\mathcal R}(L^{\oplus t})=\ell_{\mathcal R}(L)$.  Moreover, for every
$r\geq1$ there is an indecomposable $M_r\in\mathcal R$ with
$\ell_{\mathcal R}(M_r)=r$; see \cite[Section~3, Case~1]{Ringel}.
Thus each object has finite length, while these lengths are unbounded.  Since
$U,V\in\mathcal T_{X,Y}$ and $\mathcal T_{X,Y}$ is extension closed, we have
$\mathcal R\subseteq\mathcal T_{X,Y}$.

Suppose, to the contrary, that $\mathcal T_{X,Y}$ is functorially finite.
By Theorem~\ref{thm:AIR-torsion}\textup{(2)}, there is a support
$\tau$-tilting module $G\in\mathcal T_{X,Y}$ such that
$\mathcal T_{X,Y}=\Fac_\Lambda G$.
If $W\in\mathcal T_{X,Y}$, the surjectivity of its maps along $\alpha$ and
$\beta$ gives $\partial_Q(W)\geq0$.  Hence every indecomposable direct
summand of $G$ is regular or preinjective; no preprojective summand can
occur.  Let $G_{\mathcal R}$ be the direct sum of the indecomposable
summands of $G$ lying in $\mathcal R$, and set
$L=\ell_{\mathcal R}(G_{\mathcal R})$, with $L=0$ if
$G_{\mathcal R}=0$.  Distinct regular tubes are
Hom-orthogonal, and there are no nonzero morphisms from a preinjective module
to a regular module.  Thus every morphism from a finite direct sum of copies
of $G$ to an object of $\mathcal R$ factors through a finite direct sum of
copies of $G_{\mathcal R}$.

Choose an indecomposable object $M'\in\mathcal R$ with
$\ell_{\mathcal R}(M')=L+1$.  Since
$\mathcal R\subseteq\mathcal T_{X,Y}=\Fac_\Lambda G$, there is an
epimorphism $G^{\oplus t}\twoheadrightarrow M'$ for some $t$.  By the
preceding orthogonality, this epimorphism factors through the canonical
projection $G^{\oplus t}\twoheadrightarrow G_{\mathcal R}^{\oplus t}$, so the
induced map $G_{\mathcal R}^{\oplus t}\twoheadrightarrow M'$ is also epic.
Hence
\[
 \ell_{\mathcal R}(M')
 \leq \ell_{\mathcal R}(G_{\mathcal R}^{\oplus t})
 =L,
\]
a contradiction.  Therefore $\mathcal T_{X,Y}$ is not functorially finite.

\textup{(2)} The failure of termination can also be seen directly.  The
relevant tensor products and minimal left approximations are
\[
 X\otimes_AN\cong\mathbf 4^{\oplus2},
 \qquad
 \mathbf 4^{\oplus2}\lhook\joinrel\longrightarrow
 \left(
   \genfrac{}{}{0pt}{}{\mathbf 3}{\mathbf 4}
 \right)^{\oplus2},
\]
and, symmetrically,
\[
 Y\otimes_BM\cong\mathbf 2^{\oplus2},
 \qquad
 \mathbf 2^{\oplus2}\lhook\joinrel\longrightarrow
 \left(
   \genfrac{}{}{0pt}{}{\mathbf 1}{\mathbf 2}
 \right)^{\oplus2}.
\]
The initial term $E_X^0=F_AX$ has corner components
$X$ and $\mathbf 4^{\oplus2}$.  Since
$\mathbf 4\notin\mathcal Y$, the first $\mathcal Y$-correction embeds
$\mathbf 4^{\oplus2}$ into
$\left(\genfrac{}{}{0pt}{}{\mathbf 3}{\mathbf 4}\right)^{\oplus2}$.
Moreover,
\[
 (X\otimes_AN)\otimes_BM
 \cong X\otimes_A(N\otimes_BM)=0.
\]
Consequently, the defining pushout is a direct sum: it replaces the unwanted
$\mathbf 4^{\oplus2}$ and adds
$\left(\genfrac{}{}{0pt}{}{\mathbf 3}{\mathbf 4}\right)^{\oplus2}
 \otimes_BM\cong\mathbf 2^{\oplus2}$
to the $A$-component.  The new summand $\mathbf 2^{\oplus2}$ does not belong
to $\mathcal X$, so the next $\mathcal X$-correction performs the symmetric
operation.  The first four terms are therefore as follows; the last column
records the correction that is still required.
\[
\begin{array}{c|c|c|c}
 i &(E_X^i)e &(E_X^i)f &\text{next correction}\rule{0pt}{2.5ex}\\ \hline
 0
 &X
 &\mathbf 4^{\oplus2}
 &\mathcal Y\rule{0pt}{3.2ex}\\
 1
 &X\oplus\mathbf 2^{\oplus2}
 &\left(\genfrac{}{}{0pt}{}{\mathbf 3}{\mathbf 4}\right)^{\oplus2}
 &\mathcal X\rule{0pt}{4.2ex}\\
 2
 &X\oplus
   \left(\genfrac{}{}{0pt}{}{\mathbf 1}{\mathbf 2}\right)^{\oplus2}
 &\left(\genfrac{}{}{0pt}{}{\mathbf 3}{\mathbf 4}\right)^{\oplus2}
   \oplus\mathbf 4^{\oplus2}
 &\mathcal Y\rule{0pt}{4.2ex}\\
 3
 &X\oplus
   \left(\genfrac{}{}{0pt}{}{\mathbf 1}{\mathbf 2}\right)^{\oplus2}
   \oplus\mathbf 2^{\oplus2}
 &\left(\genfrac{}{}{0pt}{}{\mathbf 3}{\mathbf 4}\right)^{\oplus4}
 &\mathcal X\rule{0pt}{4.2ex}
\end{array}
\]
The table exhibits the alternating pattern.  Induction gives, for every
$r\geq0$,
\begin{align*}
 (E_X^{2r})e
   &\cong X\oplus
   \left(
     \genfrac{}{}{0pt}{}{\mathbf 1}{\mathbf 2}
   \right)^{\oplus2r},
 & (E_X^{2r})f
   &\cong
   \left(
     \genfrac{}{}{0pt}{}{\mathbf 3}{\mathbf 4}
   \right)^{\oplus2r}
   \oplus\mathbf 4^{\oplus2},\\
 (E_X^{2r+1})e
   &\cong X\oplus
   \left(
     \genfrac{}{}{0pt}{}{\mathbf 1}{\mathbf 2}
   \right)^{\oplus2r}
   \oplus\mathbf 2^{\oplus2},
 & (E_X^{2r+1})f
   &\cong
   \left(
     \genfrac{}{}{0pt}{}{\mathbf 3}{\mathbf 4}
   \right)^{\oplus(2r+2)}.
\end{align*}
Every even term has a $\mathbf 4$-summand in its $B$-component and hence
does not belong to $\mathcal C_{\mathcal Y}$; every odd term has a
$\mathbf 2$-summand in its $A$-component and hence does not belong to
$\mathcal C_{\mathcal X}$.  Thus the $X$-correction ladder never
terminates.  In other words, the vanishing of $N\otimes_BM$ and
$M\otimes_AN$ makes these pushouts split, but the nonzero one-step tensors
continue to create a new excluded simple at the opposite corner.
Interchanging the two corners gives the same conclusion for the
$Y$-correction ladder.

\end{example}

\section{One-sided approximation gluing}

\subsection{The explicit one-step correction}

Assume the one-sided compatibility condition
\begin{equation*}\tag{C$_{\mathcal X}$}
 X\otimes_AN\in\mathcal Y.
\end{equation*}
Since tensor products are right exact, this implies
\begin{equation}\label{eq:tensor-all-X}
 U\otimes_AN\in\mathcal Y
 \qquad\text{for every }U\in\mathcal X.
\end{equation}
Let
\[
 p:Y\otimes_BM\longrightarrow X_Y
\]
be a minimal left $\mathcal X$-approximation.  The possible correction in
the $Y$-ladder is the specialization of $R_{\mathcal X}$ to $F_BY$.
We spell out this specialization because it
gives the explicit module used below.  Its $B$-component is the pushout
\begin{equation}\label{eq:pushout}
\begin{tikzcd}[column sep=4.5em,row sep=3.0em]
 Y\otimes_BM\otimes_AN
   \arrow[r,"1_Y\otimes\psi"]
   \arrow[d,"p\otimes1_N"']
 &Y\arrow[d,"\lambda"]\\
 X_Y\otimes_AN\arrow[r,"\mu"']
 &V_p.
\end{tikzcd}
\end{equation}
Thus
\[
 V_p=
 \frac{Y\oplus(X_Y\otimes_AN)}
 {\operatorname{Im}(1_Y\otimes\psi,-p\otimes1_N)}.
\]
Applying the construction of Section~3.1 to
\[
 F_BY=(Y\otimes_BM,Y,1_Y\otimes\psi,1_{Y\otimes M})
\]
gives a unique $A$-module homomorphism
\[
 \overline p:V_p\otimes_BM\longrightarrow X_Y
\]
satisfying
\begin{align*}
 \overline p(\lambda(y)\otimes m)&=p(y\otimes m),\\
 \overline p(\mu(x\otimes n)\otimes m)&=x\phi(n\otimes m).
\end{align*}
Set
\[
 K_p=(X_Y,V_p,\mu,\overline p),
 \qquad
 a_p=(p,\lambda):F_BY\longrightarrow K_p.
\]
Proposition~\ref{prop:well-defined-corrections} shows that $K_p$ is a
$\Lambda$-module and that $a_p$ is its canonical correction morphism.
Since $X_Y\in\mathcal X$, formula \eqref{eq:tensor-all-X} gives
$X_Y\otimes_AN\in\mathcal Y$.  The module $V_p$ is a quotient of
$Y\oplus(X_Y\otimes_AN)$, so $V_p\in\mathcal Y$.  Hence
$K_p\in\mathcal T_{X,Y}$.
By Lemma~\ref{lem:elementary-correction}, $a_p$ is a minimal left
$\mathcal C_{\mathcal X}$-approximation.  Since
$\mathcal T_{X,Y}\subseteq\mathcal C_{\mathcal X}$ and its target belongs
to $\mathcal T_{X,Y}$, it is also a minimal left
$\mathcal T_{X,Y}$-approximation.  Thus the $Y$-ladder terminates after at
most one correction, with terminal module isomorphic to $K_p$; the
$X$-ladder has already terminated at $F_AX$.  If $F_BY$ already belongs to
$\mathcal T_{X,Y}$, we may take $p$ to be the identity and then
$K_p\cong F_BY$.

\subsection{Relative presentations and the one-sided theorem}

The only point not supplied by the bilateral theorem is that no additional
Ext-projective summand is needed.  The following relative presentation
property establishes this.

\begin{proposition}[One-step relative presentations]
\label{prop:one-step-relative-presentations}
Assume \textup{(C$_{\mathcal X}$)} and put
$G_{\mathcal X}(X,Y)=F_AX\oplus K_p$.  Every
$W\in\mathcal T_{X,Y}$ admits an epimorphism
\[
 d:G_0\twoheadrightarrow W
\]
with $G_0\in\addc G_{\mathcal X}(X,Y)$ and
$\ker d\in\mathcal T_{X,Y}$.
\end{proposition}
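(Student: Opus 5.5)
The plan is to build $d$ in two pieces: one summand from $\addc F_AX$ that covers the $A$-corner of $W$, and one from $\addc K_p$ that covers the $B$-corner. I would then control the kernel corner by corner, using only extension-closedness and quotient-closedness of $\mathcal X$ and $\mathcal Y$ together with (C$_{\mathcal X}$).

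Let $W=(U,V,u,v)\in\mathcal T_{X,Y}$. By Lemma~\ref{lem:relative-covers}, choose right approximations with kernels in the torsion classes:
\[
 g:X_0\twoheadrightarrow U,\ \ker g=U'\in\mathcal X,\qquad
 h:Y_0\twoheadrightarrow V,\ \ker h=V'\in\mathcal Y,
\]
with $X_0\in\addc X$ and $Y_0\in\addc Y$. By adjunction, $g$ gives $\widetilde g:F_AX_0\to W$ with components $g$ and $u(g\otimes1_N)$. Likewise $h$ gives $\widetilde h:F_BY_0\to W$. Since $a_p$ is a left $\mathcal T_{X,Y}$-approximation, so is $a_p^{\oplus s}:F_BY_0\to K_p^{\oplus s}$ for $Y_0\cong Y^{\oplus s}$. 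As $W\in\mathcal T_{X,Y}$, we get $\widetilde h=h'a_p^{\oplus s}$ for some $h'=(h'_e,h'_f):K_p^{\oplus s}\to W$; in particular $h'_f\lambda^{\oplus s}=h$. Put $G_0=F_AX_0\oplus K_p^{\oplus s}$ and $d=[\,\widetilde g\ \ h'\,]$. The $e$-component of $d$ is epic because $g$ is, and the $f$-component is epic because $h'_f\lambda^{\oplus s}=h$ is. Hence $d$ is epic.

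Since kernels are computed componentwise, it remains to show $Z_e\in\mathcal X$ and $Z_f\in\mathcal Y$, where $Z=\ker d$. I would use the same elementary device each time. If $[\,x\ \ y\,]:S\oplus R\to T$ has $x$ surjective, then projecting $\ker[\,x\ \ y\,]$ onto $R$ is surjective with kernel $\ker x$. This yields an exact sequence
\[
0\to\ker x\to\ker[\,x\ \ y\,]\to R\to0 .
\]
For $Z_e=\ker[\,g\ \ h'_e\,]$, take $x=g$ and $R=X_Y^{\oplus s}\in\mathcal X$. This gives $0\to U'\to Z_e\to X_Y^{\oplus s}\to0$, so $Z_e\in\mathcal X$ by extension closure.

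For $Z_f=\ker[\,u(g\otimes1_N)\ \ h'_f\,]$, use the device with the roles reversed. Take $x=h'_f$, which is surjective, and project onto $R=X_0\otimes_AN$. This gives $0\to\ker h'_f\to Z_f\to X_0\otimes_AN\to0$. Here $X_0\otimes_AN\in\mathcal Y$ by (C$_{\mathcal X}$), so it suffices to show $\ker h'_f\in\mathcal Y$.

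This last point is the main obstacle, because $V_p$ is not in $\addc Y$ and Lemma~\ref{lem:relative-covers} does not apply directly. I would lift along the pushout presentation. The map $\pi:Y_0\oplus(X_Y\otimes_AN)^{\oplus s}\twoheadrightarrow V_p^{\oplus s}$, given by $[\lambda\ \ \mu]^{\oplus s}$, is surjective. The composite $h'_f\pi$ has first component $h$, which is epic, so the device gives $0\to V'\to\ker(h'_f\pi)\to(X_Y\otimes_AN)^{\oplus s}\to0$. The right-hand term lies in $\mathcal Y$ by \eqref{eq:tensor-all-X}, so $\ker(h'_f\pi)\in\mathcal Y$. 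Since $\pi$ is surjective, $\pi$ maps $\ker(h'_f\pi)$ onto $\ker h'_f$. Thus $\ker h'_f\in\mathcal Y$ as a quotient, which gives $Z_f\in\mathcal Y$ and hence $\ker d\in\mathcal T_{X,Y}$.
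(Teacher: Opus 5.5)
Your proof is correct and follows essentially the same route as the paper. You use the same map $d=[\,\widetilde g\ \ h'\,]$ obtained by lifting $\widetilde h$ through $a_p^{\oplus s}$, the same kernel-projection device in both corners, and the same step of pulling $\ker h'_f$ back along the pushout epimorphism $[\lambda\ \ \mu]^{\oplus s}$ to place it in $\mathcal Y$.
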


\begin{proof}
Write $W=(U,V,u,v)$.  Choose the standard evaluation right
approximations
\[
 \alpha:X^{\oplus m}\longrightarrow U,
 \qquad
 \beta:Y^{\oplus n}\longrightarrow V.
\]
By Lemma~\ref{lem:relative-covers}, they are epimorphisms and
$\ker\alpha\in\mathcal X$, $\ker\beta\in\mathcal Y$.  Under the two
adjunctions, they induce $\Lambda$-morphisms
$\widetilde\alpha:(F_AX)^{\oplus m}\to W$ and
$\widetilde\beta:(F_BY)^{\oplus n}\to W$.  The source of
$\widetilde\alpha$ already lies in $\addc G_{\mathcal X}(X,Y)$, whereas
$(F_BY)^{\oplus n}$ need not.  We therefore use the correction $a_p$.
Since $W\in\mathcal T_{X,Y}\subseteq
\mathcal C_{\mathcal X}$ and $a_p^{\oplus n}$ is a left
$\mathcal C_{\mathcal X}$-approximation, $\widetilde\beta$ factors as
\[
 (F_BY)^{\oplus n}\xrightarrow{a_p^{\oplus n}}K_p^{\oplus n}
 \xrightarrow{g}W.
\]
Write $g=(q,r)$ for its $e$- and $f$-components, and write $\lambda_n$
and $\mu_n$ for the direct sums of $\lambda$ and $\mu$.  Comparing the
$f$-components in $\widetilde\beta=g a_p^{\oplus n}$ gives
\[
 r\lambda_n=\beta.
\]
Moreover, since $g$ is a $\Lambda$-morphism, its two components satisfy
\[
 r\mu_n=u(q\otimes1_N).
\]
Set
\[
d=\begin{bmatrix}\widetilde\alpha&g\end{bmatrix}:
 (F_AX)^{\oplus m}\oplus K_p^{\oplus n}\longrightarrow W.
\]
Since $\widetilde\alpha$ is a $\Lambda$-morphism with $e$-component
$\alpha$, the morphism relation forces its $f$-component to be
$u(\alpha\otimes1_N)$.  Hence the corner components of $d$ are
\[
 d_e=\begin{bmatrix}\alpha&q\end{bmatrix},
 \qquad
 d_f=\begin{bmatrix}u(\alpha\otimes1_N)&r\end{bmatrix}.
\]
Here $d_e$ is epic because $\alpha$ is epic.  Moreover,
$\beta=r\lambda_n$ is epic, so $r$, and hence $d_f$, is epic.  Therefore
$d$ is epic.

Put $Z=X_Y^{\oplus n}$ and $Z_N=Z\otimes_AN$.  Since taking the
$e$-corner is exact, $(\ker d)e=\ker d_e$.
We use the following elementary kernel observation repeatedly: if
$a:L\twoheadrightarrow H$ and $b:Z'\to H$, then projection to $Z'$ gives
an exact sequence
\[
 0\longrightarrow\ker a\longrightarrow
 \ker\begin{bmatrix}a&b\end{bmatrix}
 \longrightarrow Z'\longrightarrow0.
\]
Indeed, for $z'\in Z'$ choose $x\in L$ with $a(x)=-b(z')$.
Applying this observation to $d_e=[\,\alpha\ q\,]$ gives
\[
 0\longrightarrow\ker\alpha\longrightarrow(\ker d)e
 \longrightarrow Z\longrightarrow0.
\]
Since $\ker\alpha,Z\in\mathcal X$, it follows that
$(\ker d)e\in\mathcal X$.

It remains to prove that $(\ker d)f\in\mathcal Y$.  Put
\[
 h=u(q\otimes1_N):Z_N\longrightarrow V,
 \qquad
 c=\begin{bmatrix}\lambda_n&\mu_n\end{bmatrix}:
 Y^{\oplus n}\oplus Z_N\twoheadrightarrow V_p^{\oplus n}.
\]
Here $c$ is the canonical epimorphism from the pushout, and the two morphism
identities above give
\[
 rc=\begin{bmatrix}\beta&h\end{bmatrix}.
\]
To control $\ker r$, set
\[
 E=\ker\begin{bmatrix}\beta&h\end{bmatrix}.
\]
Since $\beta$ is epic, the kernel observation gives
\[
 0\longrightarrow\ker\beta\longrightarrow E
 \longrightarrow Z_N\longrightarrow0.
\]
Both end terms lie in $\mathcal Y$: the first by construction and the second
by \eqref{eq:tensor-all-X}.  Hence $E\in\mathcal Y$.  Moreover, the
restriction $c|_E:E\twoheadrightarrow\ker r$ is epic.  Indeed, if
$\xi\in\ker r$, choose a preimage of $\xi$ under $c$; the equality
$rc=[\,\beta\ h\,]$ shows that this preimage lies in $E$.  Thus
$\ker r\in\mathcal Y$.

Finally, $(\ker d)f=\ker d_f$ and
\[
 d_f=\begin{bmatrix}u(\alpha\otimes1_N)&r\end{bmatrix}.
\]
Because $\beta=r\lambda_n$ is epic, $r$ is epic.  Applying the symmetric
form of the kernel observation, now using the second entry $r$, gives
\[
 0\longrightarrow\ker r\longrightarrow(\ker d)f
 \longrightarrow X^{\oplus m}\otimes_AN\longrightarrow0.
\]
The two end terms lie in $\mathcal Y$, again by the preceding paragraph and
\eqref{eq:tensor-all-X}.  Therefore $(\ker d)f\in\mathcal Y$, and hence
$\ker d\in\mathcal T_{X,Y}$.
\end{proof}

\begin{theorem}[One-step approximation gluing]\label{thm:main}
Let $X\in\stau A$ and $Y\in\stau B$ satisfy
\textup{(C$_{\mathcal X}$)}, and let $p$, $K_p$, and $a_p$ be as above.
Then $a_p:F_BY\to K_p$ is a minimal left
$\mathcal T_{X,Y}$-approximation, and
$G_{\mathcal X}(X,Y)=F_AX\oplus K_p$ is support $\tau$-tilting.  Moreover,
\[
 G_{\mathcal X}(X,Y)\cong P(\mathcal T_{X,Y}),
 \qquad
 \Fac_\Lambda G_{\mathcal X}(X,Y)=\mathcal T_{X,Y}.
\]
\end{theorem}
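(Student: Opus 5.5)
The first assertion is already recorded in Section~4.1: by Lemma~\ref{lem:elementary-correction}, $a_p=\rho_{\mathcal X}^{F_BY}$ is a minimal left $\mathcal C_{\mathcal X}$-approximation. Its target $K_p$ lies in $\mathcal T_{X,Y}\subseteq\mathcal C_{\mathcal X}$, because $X_Y\in\mathcal X$ and $V_p$ is a quotient of $Y\oplus(X_Y\otimes_AN)\in\mathcal Y$ by \eqref{eq:tensor-all-X}. Hence $a_p$ is a minimal left $\mathcal T_{X,Y}$-approximation.

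Next I will establish that $G:=G_{\mathcal X}(X,Y)$ is Ext-projective in $\mathcal T_{X,Y}$ and generates it. This is the bilateral theorem in the degenerate case where both ladders terminate. The $X$-ladder terminates at $F_AX$, since $(F_AX)e=X\in\mathcal X$ and $(F_AX)f=X\otimes_AN\in\mathcal Y$ by (C$_{\mathcal X}$). The $Y$-ladder terminates at $K_p$. Theorem~\ref{thm:bilateral}(1) then gives $\Ext^1_\Lambda(G,\mathcal T_{X,Y})=0$ and $\Fac_\Lambda G=\mathcal T_{X,Y}$. Here the minimal parts of the terminal composites are $F_AX$ and $K_p$ themselves, or direct summands of them; the minimal part of $a_p$ is $a_p$. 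Alternatively, I can argue directly: Lemma~\ref{lem:corner-Ext-projective} and Lemma~\ref{lem:Wakamatsu} applied to $a_p$ give Ext-projectivity, and the epimorphism $d$ of Proposition~\ref{prop:one-step-relative-presentations} gives generation. Theorem~\ref{thm:AIR-torsion}(1) then shows that $G$ is $\tau$-rigid and that $\mathcal T_{X,Y}$ is functorially finite.

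The remaining and main point is that $G$ is the full Ext-projective generator, so that no co-Bongartz cokernel term is needed. My plan is to show that every Ext-projective $E$ in $\mathcal T_{X,Y}$ lies in $\addc G$. By Proposition~\ref{prop:one-step-relative-presentations}, there is an exact sequence $0\to\ker d\to G_0\xrightarrow{d}E\to0$ with $G_0\in\addc G$ and $\ker d\in\mathcal T_{X,Y}$. Since $\Ext^1_\Lambda(E,\ker d)=0$, this sequence splits, so $E$ is a direct summand of $G_0$ and hence $E\in\addc G$. Thus $\addc G=\mathcal P(\mathcal T_{X,Y})$.

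It follows that, after the standing deletion of repeated summands, $G\cong P(\mathcal T_{X,Y})$. Theorem~\ref{thm:AIR-torsion}(2) then shows that $G$ is support $\tau$-tilting with $\Fac_\Lambda G=\mathcal T_{X,Y}$.

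The main obstacle is precisely the kernel control in the relative presentation, which ensures $\ker d\in\mathcal T_{X,Y}$. This is already supplied by Proposition~\ref{prop:one-step-relative-presentations}, so the remaining work is the splitting argument above.
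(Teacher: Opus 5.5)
Your proposal is correct and follows the paper's proof essentially step by step. Both arguments first note that the ladders terminate at $F_AX$ and $K_p$, which lets Theorem~\ref{thm:bilateral}(1)--(2) supply Ext-vanishing, generation and functorial finiteness. Both then split the relative presentation from Proposition~\ref{prop:one-step-relative-presentations} for an Ext-projective object to get $\addc G_{\mathcal X}(X,Y)=\mathcal P(\mathcal T_{X,Y})$, and conclude with Theorem~\ref{thm:AIR-torsion}(2).
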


\begin{proof}
The observations preceding
Proposition~\ref{prop:one-step-relative-presentations} prove the assertion
about $a_p$ and show that the two correction ladders terminate at $F_AX$ and
$K_p$, respectively.  Hence
Theorem~\ref{thm:bilateral}\textup{(1)}--\textup{(2)}
gives
\[
 \Ext^1_\Lambda(G_{\mathcal X}(X,Y),\mathcal T_{X,Y})=0,
 \qquad
 \Fac_\Lambda G_{\mathcal X}(X,Y)=\mathcal T_{X,Y}.
\]
It also shows that $\mathcal T_{X,Y}$ is functorially finite.
Let $L$ be an indecomposable Ext-projective object of
$\mathcal T_{X,Y}$.  The epimorphism supplied by
Proposition~\ref{prop:one-step-relative-presentations} for $W=L$ has
kernel in $\mathcal T_{X,Y}$ and therefore splits.  Its source is a
direct summand of a finite direct sum of copies of
$G_{\mathcal X}(X,Y)$, so $L$ is isomorphic to an indecomposable summand
of $G_{\mathcal X}(X,Y)$.  Conversely, the displayed Ext-vanishing shows
that every indecomposable summand of $G_{\mathcal X}(X,Y)$ is
Ext-projective in $\mathcal T_{X,Y}$.  Thus the basic modules
$G_{\mathcal X}(X,Y)$ and $P(\mathcal T_{X,Y})$ have the same
indecomposable summands, and hence are isomorphic.
Theorem~\ref{thm:AIR-torsion}\textup{(2)} now shows that
$G_{\mathcal X}(X,Y)$ is support $\tau$-tilting.
\end{proof}

The next example shows concretely that the one-step correction can succeed
when direct corner induction does not, even for a nonzero connecting map.

\begin{example}[A nonzero connecting map]\label{ex:nonzero}
Let $Q$ be the quiver
\[
 1\xrightarrow{\alpha}3\xrightarrow{\beta}2
\]
and let $\Lambda=kQ$.  Set $e=e_1+e_2$ and $f=e_3$.  Then $\Lambda$ is
the Morita context algebra associated with
\[
 A=e\Lambda e\cong k(1\longrightarrow2),\qquad B=f\Lambda f\cong k,
 \qquad N=e\Lambda f=k\alpha,\qquad M=f\Lambda e=k\beta.
\]
Write $\mathbf 1,\mathbf 2$ for the simple $A$-modules at vertices $1,2$
and $\mathbf 3=B$; the same symbols denote the corresponding simple
$\Lambda$-modules.  We use vertical notation for radical series.
If $\gamma$ denotes the unique path from $1$ to $2$, multiplication gives
\[
 \phi(\alpha\otimes\beta)=\gamma\ne0,
 \qquad \psi=0.
\]
Thus the Morita context has a genuinely nonzero connecting map.

Take $X=\mathbf 1$ as an $A$-module and $Y=\mathbf 3=B$ as a
$B$-module.  Both are support $\tau$-tilting, and
\[
 X\otimes_AN\cong\mathbf 3=Y,
 \qquad
 Y\otimes_BM\cong\mathbf 2.
\]
Hence \textup{(C$_{\mathcal X}$)} holds.  Since
$\Hom_A(\mathbf 2,\mathbf 1)=0$, the zero map
\[
 p:\mathbf 2\longrightarrow0
\]
is the minimal left $\Fac_A\mathbf 1$-approximation.  Moreover,
$\mathbf 2\otimes_AN=0$, so the pushout defining $K_p$ gives
$K_p\cong\mathbf 3$.  Since
\[
 F_A\mathbf 1\cong
 \genfrac{}{}{0pt}{}{\mathbf 1}{\mathbf 3},
\]
Theorem~\ref{thm:main} yields the support $\tau$-tilting module
\[
 G_{\mathcal X}(\mathbf 1,\mathbf 3)
 \cong
 \genfrac{}{}{0pt}{}{\mathbf 1}{\mathbf 3}
 \oplus\mathbf 3.
\]

By contrast,
\[
 F_B\mathbf 3\cong
 \genfrac{}{}{0pt}{}{\mathbf 3}{\mathbf 2},
 \qquad
 F_A\mathbf 1\oplus F_B\mathbf 3
 \cong
 \genfrac{}{}{0pt}{}{\mathbf 1}{\mathbf 3}
 \oplus
 \genfrac{}{}{0pt}{}{\mathbf 3}{\mathbf 2}.
\]
This directly induced module has two indecomposable summands but is nonzero
at all three vertices.  Hence
$\Hom_\Lambda(e_i\Lambda,F_A\mathbf 1\oplus F_B\mathbf 3)\ne0$ for
$i=1,2,3$, so it has no nonzero projective complement.  Since
$|\Lambda|=3$, it cannot be support $\tau$-tilting.  Thus the correction
replaces the summand
$\genfrac{}{}{0pt}{}{\mathbf 3}{\mathbf 2}$ by $\mathbf 3$ and produces
a support $\tau$-tilting module.
\end{example}

The triangular specialization recovers Zhang's construction.

\begin{corollary}[{\cite[Theorem~5.1]{Zhang}}]\label{cor:Zhang}
Let
\[
 \Lambda=\begin{pmatrix}A&0\\M&B\end{pmatrix}
\]
be a triangular matrix algebra, and let $X\in\stau A$ and
$Y\in\stau B$.  If
\[
 p:Y\otimes_BM\longrightarrow X_Y
\]
is a minimal left $\Fac_AX$-approximation, then
\[
 (X,0)\oplus(X_Y,Y)_p
\]
is a support $\tau$-tilting $\Lambda$-module.
\end{corollary}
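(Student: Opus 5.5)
The plan is to specialize Theorem~\ref{thm:main} to $N=0$. The work lies in checking that the hypotheses hold and that the objects of Theorem~\ref{thm:main} agree with those in Zhang's statement.

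First I would identify the triangular matrix algebra with the Morita context algebra $\left(\begin{smallmatrix}A&0\\M&B\end{smallmatrix}\right)_{0,0}$. Since $N=0$, both $N\otimes_BM$ and $M\otimes_AN$ vanish, so $\phi=0$ and $\psi=0$ are forced. The compatibility identities \eqref{eq:context-identities} then hold trivially. A module $(U,V,u,v)$ has $u:U\otimes_A0\to V$ equal to zero, so it reduces to the usual triangular data $(U,V,v)$ with $v:V\otimes_BM\to U$. This matches Zhang's notation $(X_Y,Y)_p$ once we check that the structure map is $p$.

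Next I would verify the hypothesis (C$_{\mathcal X}$). Here $X\otimes_AN=0$, and $0\in\Fac_BY$ for any $Y$. Theorem~\ref{thm:main} therefore applies to the given $X\in\stau A$, $Y\in\stau B$ and the minimal left $\Fac_AX$-approximation $p$. It gives that $F_AX\oplus K_p$ is support $\tau$-tilting.

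It remains to compute the two summands explicitly.
\begin{itemize}
\item From the formula $F_AU=(U,U\otimes_AN,1,1\otimes\phi)$ we get $F_AX=(X,0)$.
\item For $K_p$, the pushout \eqref{eq:pushout} has top-right corner $Y$ and left column $Y\otimes_BM\otimes_A0=0\to X_Y\otimes_A0=0$. Hence $V_p\cong Y$ with $\lambda=1_Y$ and $\mu=0$.
\item The map $\overline p:V_p\otimes_BM\to X_Y$ is characterized by $\overline p(\lambda(y)\otimes m)=p(y\otimes m)$, so $\overline p=p$.
\end{itemize}
Thus $K_p=(X_Y,Y,0,p)$, which is exactly Zhang's module $(X_Y,Y)_p$. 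This yields the corollary.

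The only step needing care is this last identification of the pushout. One must check that the degenerate pushout really is $Y$ with the identity map, which is immediate because the pushout of a map out of $0$ is a direct sum with $0$. One must also note that the basic-representative convention does not change the isomorphism class of the module. Neither point should be a real obstacle.
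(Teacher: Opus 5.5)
Your proposal is correct and matches the paper's proof. Both specialize Theorem~\ref{thm:main} to $N=0$, observe that (C$_{\mathcal X}$) holds automatically, and identify $F_AX=(X,0)$, $V_p=Y$ and $K_p=(X_Y,Y)_p$; your checks that $\lambda=1_Y$, $\mu=0$ and $\overline p=p$ simply write out the identifications that the paper states in one line.
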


\begin{proof}
Here $N=0$, so condition \textup{(C$_{\mathcal X}$)} is automatic.  The pushout
\eqref{eq:pushout} reduces to $V_p=Y$, one has $F_AX=(X,0)$, and
$K_p=(X_Y,Y)_p$.  Thus Theorem~\ref{thm:main} is exactly the stated
construction.
\end{proof}

\subsection{The opposite construction}

We prepare the symmetric construction.  Let $X\in\stau A$ and
$Y\in\stau B$ and assume
\begin{equation*}\tag{C$_{\mathcal Y}$}
 Y\otimes_BM\in\mathcal X.
\end{equation*}
Since tensor products are right exact, it follows that
\[
 V\otimes_BM\in\mathcal X
 \qquad\text{for every }V\in\mathcal Y.
\]
Let
\[
 q:X\otimes_AN\longrightarrow Y_X
\]
be a minimal left $\mathcal Y$-approximation.  Form the pushout in $\modu A$
\[
\begin{tikzcd}[column sep=4.5em,row sep=3.0em]
 X\otimes_AN\otimes_BM
   \arrow[r,"1_X\otimes\phi"]
   \arrow[d,"q\otimes1_M"']
 &X\arrow[d,"\lambda'"]\\
 Y_X\otimes_BM\arrow[r,"\nu"']
 &U_q.
\end{tikzcd}
\]
The construction of Section~3.1 gives a unique $B$-module homomorphism
\[
 \overline q:U_q\otimes_AN\longrightarrow Y_X
\]
satisfying
\begin{align*}
 \overline q(\lambda'(x)\otimes n)&=q(x\otimes n),\\
 \overline q(\nu(y\otimes m)\otimes n)&=y\psi(m\otimes n).
\end{align*}
Set
\[
 K_q=(U_q,Y_X,\overline q,\nu),
 \qquad
 a_q=(\lambda',q):F_AX\longrightarrow K_q.
\]
By Proposition~\ref{prop:well-defined-corrections}, $K_q$ is a
$\Lambda$-module and $a_q$ is its canonical correction morphism.  Since
$Y_X\in\mathcal Y$, we have $Y_X\otimes_BM\in\mathcal X$.  Moreover,
$U_q$ is a quotient of $X\oplus(Y_X\otimes_BM)$, so $U_q\in\mathcal X$.
Hence $K_q\in\mathcal T_{X,Y}$.

By Lemma~\ref{lem:elementary-correction}, $a_q$ is a minimal left
$\mathcal C_{\mathcal Y}$-approximation.  Its target belongs to
$\mathcal T_{X,Y}\subseteq\mathcal C_{\mathcal Y}$, so it is also a
minimal left $\mathcal T_{X,Y}$-approximation.  Thus the $X$-ladder
terminates after at most one correction at $K_q$, while the $Y$-ladder has
already terminated at $F_BY$.

\begin{theorem}[Symmetric approximation gluing]\label{thm:symmetric}
With the notation above,
$G_{\mathcal Y}(X,Y)=K_q\oplus F_BY$ is support $\tau$-tilting, and
\[
 G_{\mathcal Y}(X,Y)\cong P(\mathcal T_{X,Y}),
 \qquad
 \Fac_\Lambda G_{\mathcal Y}(X,Y)=\mathcal T_{X,Y}.
\]
\end{theorem}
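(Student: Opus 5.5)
The plan mirrors the proof of Theorem~\ref{thm:main} with the roles of the two corners interchanged. First, the observations preceding the statement show that under \textup{(C$_{\mathcal Y}$)} the $X$-ladder terminates after at most one correction with terminal module $K_q$, and the $Y$-ladder is already terminal at $F_BY\in\mathcal T_{X,Y}$. Moreover, $a_q$ is a minimal left $\mathcal T_{X,Y}$-approximation. Hence Theorem~\ref{thm:bilateral}\textup{(1)}--\textup{(2)} applies directly and yields
\[
 \Ext^1_\Lambda(G_{\mathcal Y}(X,Y),\mathcal T_{X,Y})=0,
 \qquad
 \Fac_\Lambda G_{\mathcal Y}(X,Y)=\mathcal T_{X,Y},
\]
together with functorial finiteness of $\mathcal T_{X,Y}$.

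The remaining step, and the only real work, is the symmetric version of Proposition~\ref{prop:one-step-relative-presentations}. We must show that every $W=(U,V,u,v)\in\mathcal T_{X,Y}$ admits an epimorphism $d:G_0\twoheadrightarrow W$ with $G_0\in\addc G_{\mathcal Y}(X,Y)$ and $\ker d\in\mathcal T_{X,Y}$.

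\begin{enumerate}[label=\textup{(\roman*)},leftmargin=2.2em,nosep]
 \item Take evaluation approximations $\alpha:X^{\oplus m}\to U$ and $\beta:Y^{\oplus n}\to V$. By Lemma~\ref{lem:relative-covers} these are epic, with kernels in $\mathcal X$ and $\mathcal Y$ respectively.
 \item Factor the adjoint morphism $\widetilde\alpha:(F_AX)^{\oplus m}\to W$ through $a_q^{\oplus m}$. This is possible because $a_q$ is a left $\mathcal C_{\mathcal Y}$-approximation (Lemma~\ref{lem:elementary-correction}) and $W\in\mathcal C_{\mathcal Y}$. Write the factorization as $g=(q',r'):K_q^{\oplus m}\to W$, so that $q'\lambda'_m=\alpha$ and $q'\nu_m=v(r'\otimes1_M)$.
 \item Set $d=[\,g\ \ \widetilde\beta\,]$. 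Its components are $d_e=[\,q'\ \ v(\beta\otimes1_M)\,]$ and $d_f=[\,r'\ \ \beta\,]$. Then $d_f$ is epic because $\beta$ is, and $d_e$ is epic because $q'\lambda'_m=\alpha$ is.
 \item The kernel observation applied to $d_f$ gives $0\to\ker\beta\to(\ker d)f\to Y_X^{\oplus m}\to0$, so $(\ker d)f\in\mathcal Y$.
 \item For the $e$-corner, consider $Z_M=Y_X^{\oplus m}\otimes_BM$ and the canonical epimorphism $c=[\,\lambda'_m\ \ \nu_m\,]:X^{\oplus m}\oplus Z_M\twoheadrightarrow U_q^{\oplus m}$. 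One has $q'c=[\,\alpha\ \ v(r'\otimes1_M)\,]$. The kernel of this row map is an extension of $Z_M$ by $\ker\alpha$. Here $Z_M\in\mathcal X$ by the consequence of \textup{(C$_{\mathcal Y}$)}, so this kernel lies in $\mathcal X$. It surjects onto $\ker q'$, so $\ker q'\in\mathcal X$.
 \item Finally, the kernel observation applied to $d_e$, using its epic entry $q'$, gives an extension of $Y^{\oplus n}\otimes_BM\in\mathcal X$ by $\ker q'$. Hence $(\ker d)e\in\mathcal X$.
\end{enumerate}

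To conclude as in Theorem~\ref{thm:main}, let $L$ be an indecomposable Ext-projective object of $\mathcal T_{X,Y}$. The epimorphism $d$ for $W=L$ has kernel in $\mathcal T_{X,Y}$, so it splits, and $L$ is therefore a summand of $G_{\mathcal Y}(X,Y)$. Conversely, the Ext-vanishing shows that every summand of $G_{\mathcal Y}(X,Y)$ is Ext-projective in $\mathcal T_{X,Y}$. So the basic modules $G_{\mathcal Y}(X,Y)$ and $P(\mathcal T_{X,Y})$ agree, and Theorem~\ref{thm:AIR-torsion}\textup{(2)} gives support $\tau$-tilting.

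The main obstacle is step \textup{(v)}, the control of $\ker q'$ through the pushout presentation of $U_q$. I expect it to go through exactly as in the proof of Proposition~\ref{prop:one-step-relative-presentations}, with corners and tensor factors exchanged.
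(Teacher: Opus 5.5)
Your proposal is correct and is essentially the paper's argument: the paper proves Theorem~\ref{thm:symmetric} in one line, by applying Theorem~\ref{thm:main} after interchanging $A\leftrightarrow B$, $N\leftrightarrow M$ and $\phi\leftrightarrow\psi$, and your steps (i)--(vi) are exactly the mirrored proof of Proposition~\ref{prop:one-step-relative-presentations} followed by the mirrored conclusion of Theorem~\ref{thm:main}. The component identities you use all check out, namely $q'\lambda'_m=\alpha$, $q'\nu_m=v(r'\otimes1_M)$, and the surjection of $\ker[\,\alpha\ \ v(r'\otimes1_M)\,]$ onto $\ker q'$.
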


\begin{proof}
Apply Theorem~\ref{thm:main} after interchanging $A$ with $B$, $N$ with
$M$, and $\phi$ with $\psi$.
\end{proof}

\section{Direct corner induction}

Put $\mathbb I(X,Y)=F_AX\oplus F_BY$.  The direct construction is the
zero-correction case of the preceding one-sided theorem, which immediately
gives the following sufficient condition.

\begin{corollary}[Direct-induction criterion: sufficient direction]
\label{cor:direct-sufficient}
For an arbitrary Morita context algebra, let $X\in\stau A$ and
$Y\in\stau B$.  If
\[
 Y\otimes_BM\in\Fac_AX,
 \qquad
 X\otimes_AN\in\Fac_BY,
\]
then $\mathbb I(X,Y)$ is support $\tau$-tilting and
\[
 \Fac_\Lambda\mathbb I(X,Y)=\mathcal T_{X,Y}.
\]
\end{corollary}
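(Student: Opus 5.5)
The plan is to realize this as the zero-correction case of Theorem~\ref{thm:main}. The second hypothesis $X\otimes_AN\in\Fac_BY$ is exactly condition \textup{(C$_{\mathcal X}$)}, so Theorem~\ref{thm:main} applies to the pair $(X,Y)$. It gives that $G_{\mathcal X}(X,Y)=F_AX\oplus K_p$ is support $\tau$-tilting with $\Fac_\Lambda G_{\mathcal X}(X,Y)=\mathcal T_{X,Y}$. Here $p:Y\otimes_BM\to X_Y$ is a minimal left $\mathcal X$-approximation. It therefore remains only to identify $K_p$ with $F_BY$.

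For this I will use the first hypothesis $Y\otimes_BM\in\mathcal X$. Since the source of $p$ already lies in $\mathcal X$, the identity $1_{Y\otimes_BM}$ is a left $\mathcal X$-approximation, and it is clearly left minimal. Minimal left approximations are unique up to isomorphism, so we may take $X_Y=Y\otimes_BM$ and $p=1$. The remark at the end of Section~4.1 then records that $K_p\cong F_BY$.

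To check this directly: with $p=1$, the map $p\otimes1_N$ in the pushout square \eqref{eq:pushout} is the identity. Hence $\lambda=1_Y$, $V_p=Y$ and $\mu=1_Y\otimes\psi$. The two defining identities for $\overline p$ then force $\overline p=1_{Y\otimes M}$, so that $K_p=(Y\otimes_BM,Y,1_Y\otimes\psi,1_{Y\otimes M})=F_BY$.

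Consequently $\mathbb I(X,Y)=F_AX\oplus F_BY\cong G_{\mathcal X}(X,Y)$, and both conclusions follow from Theorem~\ref{thm:main}, after passing to the basic representative under the standing convention. The only mild point is the bookkeeping that the choice of minimal approximation does not matter. This is covered by the uniqueness of minimal approximations up to isomorphism, which makes $K_p$ well defined up to isomorphism. There is no genuine obstacle in this argument.
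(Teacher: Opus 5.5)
Your proposal is correct and follows the paper's own proof. The paper also applies Theorem~\ref{thm:main} with $p=1_{Y\otimes_BM}$, which is a minimal left $\mathcal X$-approximation by the first hypothesis, and uses the universal property of \eqref{eq:pushout} to identify $K_p$ with $F_BY$, exactly as you do.
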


\begin{proof}
In Theorem~\ref{thm:main}, take
$p=1_{Y\otimes_BM}$, which is a minimal left $\mathcal X$-approximation.
The universal property of \eqref{eq:pushout} identifies the resulting
pushout module with $K_p\cong F_BY$.
\end{proof}

For the converse, we impose the following condition on the connecting maps:
\begin{equation*}\tag{R}
 \operatorname{Im}\phi\subseteq J(A),
 \qquad
 \operatorname{Im}\psi\subseteq J(B).
\end{equation*}
By the multiplication formula in Section~2.1, this is equivalently
\[
 NM\subseteq J(A),\qquad MN\subseteq J(B).
\]
The hypothesis is automatic in the usual basic setting.  Indeed, if
$\Lambda$ is basic, then $\Lambda/J(\Lambda)$ is a product of division
algebras.  The images of the two diagonal idempotents $e$ and $f=1-e$ are
therefore complementary central idempotents, so
$e\Lambda f=N$ and $f\Lambda e=M$ lie in $J(\Lambda)$.  The corner identity
$J(e\Lambda e)=eJ(\Lambda)e$, together with the multiplication formula,
then gives \textup{(R)}.  In particular, it holds for a bound quiver algebra
and any partition of its vertex idempotents; this is the basic case of the
corner construction in \cite[Example~2.10]{GreenPsaroudakis}.

Conversely, when $A$ and $B$ are basic, the radical formula in
Lemma~\ref{lem:radical-context}\textup{(2)} shows that \textup{(R)} is
equivalent to $\Lambda$ being basic.  Thus basicness of $A$ and $B$ alone
does not suffice: the strict context $M_2(k)$ discussed below gives a concrete
counterexample.

\begin{theorem}[Converse and characterization for direct induction]
\label{thm:direct-characterization}
Assume \textup{(R)}, and let $X\in\modu A$ and $Y\in\modu B$.
Then the following are equivalent:
\begin{enumerate}[label=\textup{(\arabic*)},leftmargin=2.2em]
 \item $\mathbb I(X,Y)$ is a support $\tau$-tilting $\Lambda$-module;
 \item $X\in\stau A$, $Y\in\stau B$, and
 \[
  Y\otimes_BM\in\Fac_AX,
  \qquad
  X\otimes_AN\in\Fac_BY.
 \]
\end{enumerate}
When these equivalent conditions hold, let $(X,P_X)$ and $(Y,P_Y)$ be the
corresponding support $\tau$-tilting pairs.  Then
\[
 \bigl(\mathbb I(X,Y),F_AP_X\oplus F_BP_Y\bigr)
\]
is the corresponding support $\tau$-tilting pair over $\Lambda$.
Moreover,
\[
 \Fac_\Lambda\mathbb I(X,Y)=\mathcal T_{X,Y}.
\]
\end{theorem}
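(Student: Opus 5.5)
The direction (2)$\Rightarrow$(1) and the equality $\Fac_\Lambda\mathbb I(X,Y)=\mathcal T_{X,Y}$ are Corollary~\ref{cor:direct-sufficient}, which needs no hypothesis on the connecting maps. It remains to identify the projective complement and to prove (1)$\Rightarrow$(2).

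\emph{Projective complement.} Since $F_A$ and $F_B$ are left adjoint to the corner restrictions, $F_AP_X=P_X\otimes_Ae\Lambda$ is a projective $\Lambda$-module. Adjunction gives
\[
 \Hom_\Lambda(F_AP_X,W)\cong\Hom_A(P_X,We).
\]
For $W=\mathbb I(X,Y)$ we have $We=X\oplus(Y\otimes_BM)$, which lies in $\Fac_AX$ under (2). Hence this space vanishes, because $\Hom_A(P_X,\Fac_AX)=0$. The $B$-side is symmetric. For the count, I plan to use the radical condition through the promised Lemma~\ref{lem:radical-context}: under (R), $\Lambda/J(\Lambda)\cong A/J(A)\times B/J(B)$. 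The tops of $F_AP$ and $F_BQ$ are then $\operatorname{top}P$ and $\operatorname{top}Q$, so $|F_AP|=|P|$ and $|F_BQ|=|Q|$, and $|\Lambda|=|A|+|B|$. Next I need $|\mathbb I(X,Y)|=|X|+|Y|$. Full faithfulness of $F_A$ and $F_B$ preserves indecomposability and distinctness within each corner. An $F_AX_i\cong F_BY_j$ would force $X_i\cong Y_j\otimes_BM$ and $Y_j\cong X_i\otimes_AN$; comparing tops under (R), using that $\operatorname{top}(F_AX_i)$ is concentrated at the $e$-corner, rules this out. Summing gives $|\mathbb I(X,Y)|+|F_AP_X\oplus F_BP_Y|=|\Lambda|$.

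\emph{Converse (1)$\Rightarrow$(2).} Let $(\mathbb I(X,Y),P)$ be a support $\tau$-tilting pair. First, $\tau$-rigidity of $\mathbb I(X,Y)$ should give the $\tau$-rigidity of $X$ and $Y$ together with the two containments. I intend to invoke Lemma~\ref{lem:tau-rigid-direct} for this, which presumably states that $F_AX\oplus F_BY$ is $\tau$-rigid if and only if $X$ and $Y$ are $\tau$-rigid, $Y\otimes_BM\in\Fac_AX$, and $X\otimes_AN\in\Fac_BY$. Failing that, I will argue directly from Lemma~\ref{lem:corner-Ext-projective}-type adjunction: $\Ext^1_\Lambda(F_AX,W)\cong\Ext^1_A(X,We)$, since $F_A$ is exact on projective presentations and $e\Lambda$ is projective over $A$ on the relevant side. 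Taking $W$ to be quotients of $\mathbb I$ then gives $\Ext^1_A(X,\Fac X)=0$. For the containment, $Y\otimes_BM$ is a summand of $\mathbb I e$, and $\Fac_\Lambda\mathbb I$ restricts into $\Fac_A(\mathbb I e)$; the condition $\Ext^1(\mathbb I,\Fac\mathbb I)=0$ together with projective presentations should then force $Y\otimes_BM\in\Fac_AX$.

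It remains to show that $X$ and $Y$ are \emph{support} $\tau$-tilting. Choose projective complements $P_X$ and $P_Y$ of the $\tau$-rigid pairs $(X,P_X)$ and $(Y,P_Y)$ to be maximal. Then $(\mathbb I,F_AP_X\oplus F_BP_Y)$ is a $\tau$-rigid pair by the Hom computation above. By Proposition~\ref{prop:AIR-criteria}(2),
\[
 |X|+|P_X|\le|A|,\qquad |Y|+|P_Y|\le|B|.
\]
I expect to compare these with $|\mathbb I|+|P|=|\Lambda|=|A|+|B|$ via the count above. The plan is to decompose $P=F_AP'\oplus F_BP''$, possible under (R) by the tops, and to note that $\Hom(F_AP',\mathbb I)=0$ gives $\Hom_A(P',X)=0$. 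This yields $|X|+|P'|\le|A|$ and $|Y|+|P''|\le|B|$, with the sum attaining equality. Hence each inequality is an equality, so $X\in\stau A$ and $Y\in\stau B$. The main obstacle is this summand count, in particular verifying $|\mathbb I|=|X|+|Y|$ and the top decomposition; this is exactly where (R) enters.
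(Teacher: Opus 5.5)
\textbf{Gap: where the two containments come from.} The step in (1)$\Rightarrow$(2) that obtains $Y\otimes_BM\in\Fac_AX$ and $X\otimes_AN\in\Fac_BY$ does not work.

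Lemma~\ref{lem:tau-rigid-direct} does not say what you presume. It says that $\mathbb I(X,Y)$ is $\tau$-rigid if and only if $X$ and $Y$ are $\tau$-rigid and the Hom-vanishing conditions \eqref{eq:cross-Hom} hold: $\Hom_A(Y\otimes_BM,\tau_AX)=0$ and $\Hom_B(X\otimes_AN,\tau_BY)=0$. These are strictly weaker than the containments, and no argument from $\tau$-rigidity alone can give them. Your Ext fallback is included in this.

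Here is a counterexample. Take the triangular algebra $\Lambda=\left(\begin{smallmatrix}k&0\\k&k\end{smallmatrix}\right)$, so \textup{(R)} holds, with $X=0$ and $Y=k$. Then $\mathbb I(0,k)=F_Bk=f\Lambda$ is projective, hence $\tau$-rigid. Yet $Y\otimes_BM=k\notin\Fac_A0$. What rules this case out is the projective complement: $\Hom_\Lambda(e\Lambda,f\Lambda)=(f\Lambda)e\neq0$.

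For the same reason, your sentence that $(\mathbb I,F_AP_X\oplus F_BP_Y)$ is a $\tau$-rigid pair ``by the Hom computation above'' is circular. That computation used $Y\otimes_BM\in\Fac_AX$, and it fails in the example above, where $P_X=A$. Separately, the fallback isomorphism $\Ext^1_\Lambda(F_AX,W)\cong\Ext^1_A(X,We)$ needs $F_A$ to be exact, that is, $N$ projective as a left $A$-module. This is not assumed.

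\textbf{Repair.} The fix uses only ingredients already in your last paragraph, in a different order.
\begin{enumerate}
\item Use Lemma~\ref{lem:tau-rigid-direct} only to get that $X$ and $Y$ are $\tau$-rigid and that \eqref{eq:cross-Hom} holds.
\item Write the complement as $P=F_AP'\oplus F_BP''$. Adjunction gives $\Hom_A\bigl(P',X\oplus(Y\otimes_BM)\bigr)=0$. So you get $\Hom_A(P',Y\otimes_BM)=0$ as well, not only $\Hom_A(P',X)=0$.
\item Your summand count then shows that $(X,P')$ and $(Y,P'')$ are support $\tau$-tilting pairs.
\item Now apply \eqref{eq:Fac-orthogonality} to $\Hom_A(Y\otimes_BM,\tau_AX)=0=\Hom_A(P',Y\otimes_BM)$. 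This gives $Y\otimes_BM\in\Fac_AX$, and symmetrically $X\otimes_AN\in\Fac_BY$.
\item Uniqueness of complements gives $P'\cong P_X$ and $P''\cong P_Y$.
\end{enumerate}

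This repaired version is essentially the paper's proof. The paper uses the maximal $Q_X,Q_Y$ and shows $P_Z\in\addc(F_AQ_X\oplus F_BQ_Y)$, but the count and the orthogonality step are the same. Your separate check of the projective complement under (2), via the summand count and the separation of tops, is also correct.
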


The converse uses the radical hypothesis to separate the indecomposable
summands induced from the two corners.  Preservation of minimal
projective presentations does not require this hypothesis.

\subsection{The converse under the radical condition}

We prove Theorem~\ref{thm:direct-characterization}.  We use the standard
facts collected in Section~2.2.  The radical formula below is classical;
see \cite{Sands}.  A short proof is included to make the role of
\textup{(R)} explicit.

\begin{lemma}[Corner induction and the radical formula]\label{lem:radical-context}
\begin{enumerate}[label=\textup{(\arabic*)},leftmargin=2.2em]
 \item Without assumptions on $\phi$ or $\psi$, the functors $F_A$ and $F_B$
 preserve projective covers and minimal projective presentations.
 \item If \textup{(R)} holds, then
 \begin{equation}\label{eq:radical-formula}
  J(\Lambda)=
  \begin{pmatrix}J(A)&N\\M&J(B)\end{pmatrix},
  \qquad
 \Lambda/J(\Lambda)\cong A/J(A)\times B/J(B).
 \end{equation}
 Consequently, the simple $\Lambda$-modules are precisely $(S,0)$ and
 $(0,T)$, where $S$ and $T$ run through the simple $A$- and $B$-modules,
 respectively.  Hence $|\Lambda|=|A|+|B|$, and the indecomposable
 projective $\Lambda$-modules are precisely the modules $F_AP$ and $F_BQ$
 induced from indecomposable projective modules $P$ over $A$ and $Q$ over
 $B$.
\end{enumerate}
\end{lemma}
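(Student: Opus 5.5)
For part (1) I would argue through the adjunction $F_A\dashv(-)e$. The functor $F_A=-\otimes_Ae\Lambda$ sends $A_A$ to $e\Lambda$, so it preserves projectives. It is right exact and fully faithful, and for a projective $P$ we have $\Hom_\Lambda(F_AP,W)\cong\Hom_A(P,We)$.

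Let $\pi:P\to U$ be a projective cover. Then $F_A\pi:F_AP\to F_AU$ is epic. To see that it is right minimal, take an endomorphism $s$ of $F_AP$ with $(F_A\pi)s=F_A\pi$. By full faithfulness, $s=F_At$ for an endomorphism $t$ of $P$ with $\pi t=\pi$. Minimality of $\pi$ makes $t$, and hence $s$, invertible.

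For minimal presentations I would use the characterisation that $P_1\xrightarrow{\sigma}P_0\to U\to0$ is minimal exactly when $P_0\to U$ and $P_1\to\operatorname{Im}\sigma$ are right minimal. Equivalently, the presentation has no split summand of the form $P'\xrightarrow{1}P'$ or $P'\to0$ in the homotopy category of two-term complexes of projectives. Since $F_A$ is fully faithful on projectives, it identifies the category of two-term complexes over $\operatorname{proj}A$ with a full subcategory of complexes over $\operatorname{proj}\Lambda$. Right exactness gives $\Coker F_A\sigma\cong F_AU$. A split trivial summand of $F_A\sigma$ is given by idempotent endomorphisms of $F_AP_i$. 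These come from idempotents of $P_i$, so the summand would already exist for $\sigma$, contradicting minimality. The case of $F_B$ is symmetric.

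For part (2), I would show that the displayed ideal $I=\left(\begin{smallmatrix}J(A)&N\\M&J(B)\end{smallmatrix}\right)$ is a two-sided ideal, is nilpotent, and has semisimple quotient $A/J(A)\times B/J(B)$.

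Closure of $I$ under multiplication from either side follows from the multiplication formula. The only nontrivial diagonal entries are $\phi(n\otimes m')\in J(A)$ and $\psi(m\otimes n')\in J(B)$, which hold by (R).

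The quotient map $\Lambda\to A/J(A)\times B/J(B)$, sending $\left(\begin{smallmatrix}a&n\\m&b\end{smallmatrix}\right)$ to $(\bar a,\bar b)$, is a ring homomorphism. The point is that the off-diagonal contributions $\phi(n\otimes m')$ and $\psi(m\otimes n')$ vanish modulo the radicals, again by (R). Its kernel is $I$, so $\Lambda/I$ is semisimple and hence $J(\Lambda)\subseteq I$.

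For the reverse inclusion I expect the main obstacle, namely proving $I$ nilpotent. My plan is to show that in $I^{2t}$ the diagonal entries lie in sums of products containing many factors from $J(A)$ or $J(B)$. Every diagonal entry of a product of elements of $I$ is a sum of words alternating between corners. Each return to a corner through $N$ then $M$, or $M$ then $N$, contributes a factor $NM\subseteq J(A)$ or $MN\subseteq J(B)$. Each stay in a corner contributes a factor of $J(A)$ or $J(B)$. Hence a word of length $2t$ contains at least $t$ radical factors in the diagonal and off-diagonal positions, suitably bounded, and finite-dimensionality gives $J(A)^r=0=J(B)^r$. Alternatively, I would simply check that $1-x$ is invertible for $x\in I$ by reducing modulo $\left(\begin{smallmatrix}J(A)&N\\M&J(B)\end{smallmatrix}\right)$ and using that $eIe=J(A)$ and $fIf=J(B)$ consist of quasi-regular elements. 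I would try this second route first, since it avoids the combinatorics.

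The consequences are then routine. Simple $\Lambda$-modules are the simple modules over $A/J(A)\times B/J(B)$, which are $(S,0)$ and $(0,T)$. Hence $|\Lambda|=|A|+|B|$. The modules $F_AP$ for $P$ indecomposable projective over $A$ are indecomposable, because $F_A$ is fully faithful, and projective. By part (1), $F_AP$ is the projective cover of $F_A(\operatorname{top}P)$, which has top $(\operatorname{top}P,0)$. The same holds for $F_B$. These modules therefore exhaust all $|A|+|B|$ indecomposable projectives.
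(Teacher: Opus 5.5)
\paragraph{Part (1) and the routine parts of (2).} Your part (1) is correct. It differs mildly from the paper's route. The paper shows directly that the corestriction $F_AP_1\to\ker(F_A\pi)$ is right minimal, deducing this from right minimality of $F_A\sigma$. You instead characterize minimal presentations by the absence of trivial summands $P'\xrightarrow{1}P'$ and $P'\to0$, and pull idempotents back along the fully faithful functor $F_A$, which also reflects isomorphisms and zero objects. One correction: these summands must be taken in the category of two-term complexes, not in the homotopy category, where $P'\xrightarrow{1}P'$ is zero. In part (2), your argument that $I$ is an ideal, the inclusion $J(\Lambda)\subseteq I$, and the deduction of simples and indecomposable projectives are fine. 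The top of $F_AP$ is read off from adjunction: $\Hom_\Lambda(F_AP,(S,0))\cong\Hom_A(P,S)$ and $\Hom_\Lambda(F_AP,(0,T))=0$.

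\paragraph{The gap: the inclusion $I\subseteq J(\Lambda)$.} This is the only substantive step of part (2), and the route you say you would try first is circular. Modulo $I$, the element $1-x$ is just $1$. A unit modulo an ideal is a unit of $\Lambda$ when that ideal lies in $J(\Lambda)$, but that is exactly the inclusion you are trying to prove. Knowing that $eIe=J(A)$ and $fIf=J(B)$ consist of quasi-regular elements is the right input. What is missing is a mechanism that transfers this to the whole matrix, where the off-diagonal entries interact through $\phi$ and $\psi$; condition (R) is needed a second time there. The paper supplies this with a Schur-complement factorization. For $w=\left(\begin{smallmatrix}a&n\\m&b\end{smallmatrix}\right)\in I$, put $p=1_A-a$, $q=1_B-b$ and $s=q-\psi(m\otimes p^{-1}n)$, which lies in $1_B+J(B)$ by (R). Then
\[
 1-w=\begin{pmatrix}p&-n\\-m&q\end{pmatrix}
 =\begin{pmatrix}1&0\\-mp^{-1}&1\end{pmatrix}
  \begin{pmatrix}p&0\\0&s\end{pmatrix}
  \begin{pmatrix}1&-p^{-1}n\\0&1\end{pmatrix},
\]
and all three factors are invertible.

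\paragraph{Your nilpotency route.} This route can be made to work, but the count as you state it does not force vanishing. Radical factors distributed between $J(A)$ and $J(B)$ do not multiply into a single power. For instance, an excursion $n\,b_1\cdots b_k\,m$ is only known to lie in $NJ(B)^kM\subseteq NM\subseteq J(A)$, so it contributes a single $J(A)$-factor however long it is. The correct bookkeeping runs as follows, with $J(A)^{r_A}=0=J(B)^{r_B}$.
\begin{itemize}
\item In a nonzero word from $e$ to $e$, every maximal run of $J(B)$-letters has length less than $r_B$.
\item Hence every excursion has length at most $r_B+1$.
\item So the word is a product of at least $\ell/(r_B+1)$ elements of $J(A)$, and it vanishes once $\ell\geq r_A(r_B+1)$.
\item Words ending or starting at $f$ are handled by splitting off the final or initial run of $J(B)$-letters, or by symmetry.
\end{itemize}
With this repair you obtain the stronger conclusion that $I$ is nilpotent. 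That is a legitimate alternative to the paper's quasi-regularity argument.
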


\begin{proof}
\textup{(1)} The functor $F_A$ is right exact.  Since it is left adjoint to
the exact restriction functor $(-)e$, it sends projective modules to
projective modules.  It is also fully faithful, and therefore preserves right
minimal morphisms: if $d:P\to L$ is right minimal and an endomorphism $h$
of $F_AP$ satisfies $(F_Ad)h=F_Ad$, full faithfulness gives $h=F_Ah_0$
and $dh_0=d$.  Thus $h_0$, and hence $h$, is invertible.

Let $P_1\xrightarrow{\sigma}P_0\xrightarrow{\pi}U\to0$ be a minimal
projective presentation.  Both $\pi$ and $\sigma$ are right minimal.
Right exactness gives
\[
 F_AP_1\xrightarrow{F_A\sigma}F_AP_0
 \xrightarrow{F_A\pi}F_AU\longrightarrow0.
\]
The epimorphism $F_A\pi$ is right minimal, so it is a projective cover.
Set $K=\ker(F_A\pi)$.  Right exactness also gives
$\operatorname{Im}(F_A\sigma)=K$, so $F_A\sigma$ factors as
\[
 F_AP_1\xrightarrow{\rho}K\xrightarrow{\iota}F_AP_0,
\]
where $\rho$ is an epimorphism and $\iota$ is the kernel inclusion.  If an
endomorphism $h$ of $F_AP_1$ satisfies $\rho h=\rho$, then
\[
 (F_A\sigma)h=\iota\rho h=\iota\rho=F_A\sigma.
\]
Since $F_A\sigma$ is right minimal, $h$ is invertible.  Thus $\rho$ is
right minimal and hence is a projective cover of $K$.
This proves the assertion for $F_A$; the proof for $F_B$ is identical.

\textup{(2)}
Set
\[
 \mathfrak r=\begin{pmatrix}J(A)&N\\M&J(B)\end{pmatrix}.
\]
Condition~\textup{(R)} ensures that $\mathfrak r$ is a two-sided ideal:
the products of two off-diagonal entries lie in $J(A)$ or $J(B)$.
Moreover,
\[
 \Lambda/\mathfrak r\cong A/J(A)\times B/J(B)
\]
is semisimple, and therefore $J(\Lambda)\subseteq\mathfrak r$.

For the reverse inclusion, fix $z\in\mathfrak r$.  By the Jacobson radical
criterion, it is enough to prove that $1-\lambda z$ is invertible for every
$\lambda\in\Lambda$.  Since $\mathfrak r$ is an ideal, one has
$\lambda z\in\mathfrak r$.  Thus it suffices to show that $1-w$ is invertible
for every $w\in\mathfrak r$.  Write
\[
 w=\begin{pmatrix}a&n\\m&b\end{pmatrix}\in\mathfrak r
\]
and put $p=1_A-a$ and $q=1_B-b$.  Both $p$ and $q$ are invertible.  By
\textup{(R)},
\[
 s=q-\psi(m\otimes p^{-1}n)\in1_B+J(B),
\]
so $s$ is also invertible.  Direct multiplication gives
\[
 1-w=\begin{pmatrix}p&-n\\-m&q\end{pmatrix}
 =
 \begin{pmatrix}1&0\\-mp^{-1}&1\end{pmatrix}
 \begin{pmatrix}p&0\\0&s\end{pmatrix}
 \begin{pmatrix}1&-p^{-1}n\\0&1\end{pmatrix},
\]
and all three factors are invertible.  Hence $1-w$ is invertible for every
$w\in\mathfrak r$.  Applying this to $w=\lambda z$ shows that
$1-\lambda z$ is invertible for every $\lambda\in\Lambda$.  Therefore
$z\in J(\Lambda)$.  Since $z$ was arbitrary,
$\mathfrak r\subseteq J(\Lambda)$, and \eqref{eq:radical-formula} follows.

It remains to deduce the assertions about simple and projective modules.
The semisimple quotient in \eqref{eq:radical-formula} shows that every
simple $\Lambda$-module is uniquely of the form $(S,0)$ or $(0,T)$, with
$S$ simple over $A$ and $T$ simple over $B$.  To compute the tops of
induced modules, recall that
\[
 \operatorname{top}_\Lambda L
 \cong L\otimes_\Lambda\Lambda/J(\Lambda).
\]
Since \eqref{eq:radical-formula} gives
$e(\Lambda/J(\Lambda))\cong(A/J(A),0)$, for $U\in\modu A$ we obtain
\[
\begin{aligned}
 \operatorname{top}_\Lambda(F_AU)
 &\cong (U\otimes_Ae\Lambda)\otimes_\Lambda\Lambda/J(\Lambda)\\
 &\cong U\otimes_Ae(\Lambda/J(\Lambda))
 \cong(\operatorname{top}_AU,0).
\end{aligned}
\]
Thus the second component disappears because the off-diagonal bimodules
vanish in the semisimple quotient.  Symmetrically, for $V\in\modu B$,
\[
 \operatorname{top}_\Lambda(F_BV)
 \cong(0,\operatorname{top}_BV).
\]
By part~\textup{(1)}, induction preserves projective
modules.  Hence, if $P$ and $Q$ are indecomposable projective modules over
$A$ and $B$, respectively, then $F_AP$ and $F_BQ$ are indecomposable
projective modules with simple tops supported at different corners.
Conversely, every indecomposable projective $\Lambda$-module is the
projective cover of one of the simple modules above.  This proves both the
classification and the equality $|\Lambda|=|A|+|B|$.
\end{proof}

Under \textup{(R)}, the same argument gives, for $X,Y$ and for projectives
$P,Q$ having no repeated isomorphic indecomposable summands,
\begin{equation}\label{eq:summand-counts}
 |F_AX\oplus F_BY|=|X|+|Y|,
 \qquad
 |F_AP\oplus F_BQ|=|P|+|Q|.
\end{equation}
Indeed, full faithfulness preserves indecomposable summands within each
corner, while \eqref{eq:radical-formula} separates the supports of their
tops.

\begin{lemma}[$\tau$-rigidity test]\label{lem:tau-rigid-direct}
For arbitrary connecting maps and $X\in\modu A$, $Y\in\modu B$, the
module $\mathbb I(X,Y)$ is $\tau$-rigid if and only if $X$ and $Y$ are
$\tau$-rigid and
\begin{equation}\label{eq:cross-Hom}
 \Hom_A(Y\otimes_BM,\tau_AX)=0,
 \qquad
 \Hom_B(X\otimes_AN,\tau_BY)=0.
\end{equation}
\end{lemma}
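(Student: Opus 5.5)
The plan is to use the presentation criterion of Proposition~\ref{prop:AIR-criteria}\textup{(1)}: a module $T$ is $\tau$-rigid if and only if $\Hom(\sigma_T,T)$ is surjective, where $\sigma_T$ is a minimal projective presentation.

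\emph{Step 1: presentations and adjunction.} Let $\sigma_X:P_1^X\to P_0^X$ and $\sigma_Y:P_1^Y\to P_0^Y$ be minimal projective presentations. By Lemma~\ref{lem:radical-context}\textup{(1)}, which needs no hypothesis on $\phi,\psi$, the map $F_A\sigma_X\oplus F_B\sigma_Y$ is a minimal projective presentation of $\mathbb I(X,Y)$. Applying $\Hom_\Lambda(-,\mathbb I(X,Y))$ splits it into four components. Through the adjunctions $F_A\dashv(-)e$ and $F_B\dashv(-)f$, these components are identified naturally with
\[
 \Hom_A(\sigma_X,X),\quad
 \Hom_A(\sigma_X,Y\otimes_BM),\quad
 \Hom_B(\sigma_Y,X\otimes_AN),\quad
 \Hom_B(\sigma_Y,Y).
\]
Here we use $(F_AX)e=X$, $(F_BY)e=Y\otimes_BM$, $(F_AX)f=X\otimes_AN$ and $(F_BY)f=Y$, together with naturality of the adjunction in the first variable.

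\emph{Step 2: componentwise surjectivity.} Surjectivity of the whole map is equivalent to surjectivity of each of the four pieces. Applying Proposition~\ref{prop:AIR-criteria}\textup{(1)} over $A$ and over $B$ to each piece gives:
\begin{itemize}
 \item $\Hom_A(X,\tau_AX)=0$;
 \item $\Hom_A(Y\otimes_BM,\tau_AX)=0$;
 \item $\Hom_B(X\otimes_AN,\tau_BY)=0$;
 \item $\Hom_B(Y,\tau_BY)=0$.
\end{itemize}
These are exactly the $\tau$-rigidity of $X$ and $Y$ together with \eqref{eq:cross-Hom}.

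\emph{Main obstacle.} The delicate point is the identification in Step~1. The restriction $\Hom_\Lambda(F_AP_0^X,\mathbb I)\to\Hom_\Lambda(F_AP_1^X,\mathbb I)$ along $F_A\sigma_X$ must correspond exactly to $\Hom_A(\sigma_X,\mathbb Ie)$ under the adjunction bijections. This is naturality of $\Theta$ in the first variable, and I will check it explicitly.

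I also need the direct-sum decomposition of the induced map to be a genuine block decomposition. This holds because $\Hom$ out of a direct sum is the product and $\Hom$ into $\mathbb I$ is computed after restricting to $e$ or $f$. Minimality of the presentation matters, since Proposition~\ref{prop:AIR-criteria}\textup{(1)} is stated for minimal presentations; it is exactly what Lemma~\ref{lem:radical-context}\textup{(1)} supplies. No radical condition is used anywhere, which matches the claim that the lemma holds for arbitrary connecting maps.
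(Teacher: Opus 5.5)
Your proposal is correct and is essentially the paper's own proof. Both use the minimal presentation $F_A\sigma_X\oplus F_B\sigma_Y$ supplied by Lemma~\ref{lem:radical-context}(1), split $\Hom_\Lambda(\sigma_Z,Z)$ into four blocks identified through the two adjunctions, and apply Proposition~\ref{prop:AIR-criteria}(1) to each block. The one point the paper states that you leave tacit is that it works with the actual direct sum $F_AX\oplus F_BY$ rather than its basic representative; this is harmless, since deleting repeated summands does not affect $\tau$-rigidity.
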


\begin{proof}
Put $Z=F_AX\oplus F_BY$, with the actual multiplicities of its summands;
deleting repeated isomorphic summands does not affect $\tau$-rigidity.
Choose minimal projective presentations
\[
 \sigma_X:P_1^X\longrightarrow P_0^X,
 \qquad
 \sigma_Y:P_1^Y\longrightarrow P_0^Y.
\]
By Lemma~\ref{lem:radical-context}\textup{(1)},
\[
 \sigma_Z:=F_A\sigma_X\oplus F_B\sigma_Y
\]
is a minimal projective presentation of $Z$.  By
Proposition~\ref{prop:AIR-criteria}\textup{(1)}, $Z$ is $\tau$-rigid
if and only if $\Hom_\Lambda(\sigma_Z,Z)$ is surjective.

Both $\sigma_Z$ and $Z$ have one summand induced from each corner.
Consequently, $\Hom_\Lambda(\sigma_Z,Z)$ is the direct sum of four maps.
Using the adjunctions $F_A\dashv(-)e$ and $F_B\dashv(-)f$, these maps are
identified as follows:
\[
\begin{aligned}
 \Hom_\Lambda(F_A\sigma_X,F_AX)
   &\cong \Hom_A(\sigma_X,X),\\
 \Hom_\Lambda(F_A\sigma_X,F_BY)
   &\cong \Hom_A(\sigma_X,Y\otimes_BM),\\
 \Hom_\Lambda(F_B\sigma_Y,F_AX)
   &\cong \Hom_B(\sigma_Y,X\otimes_AN),\\
 \Hom_\Lambda(F_B\sigma_Y,F_BY)
   &\cong \Hom_B(\sigma_Y,Y).
\end{aligned}
\]
Hence $\Hom_\Lambda(\sigma_Z,Z)$ is surjective exactly when all four maps
on the right are surjective.

For a minimal projective presentation $\sigma_T$,
Proposition~\ref{prop:AIR-criteria}\textup{(1)} gives
\[
 \Hom_C(\sigma_T,L)\text{ surjective}
 \quad\Longleftrightarrow\quad
 \Hom_C(L,\tau_CT)=0.
\]
Applied to the first and fourth maps, this says precisely that $X$ and
$Y$ are $\tau$-rigid.  Applied to the second and third maps, it gives,
respectively,
\[
 \Hom_A(Y\otimes_BM,\tau_AX)=0,
 \qquad
 \Hom_B(X\otimes_AN,\tau_BY)=0.
\]
These are exactly the two conditions in \eqref{eq:cross-Hom}.
\end{proof}

\begin{proof}[Proof of Theorem~\ref{thm:direct-characterization}]
Suppose first that $Z=\mathbb I(X,Y)$ is support $\tau$-tilting.  By
Lemma~\ref{lem:tau-rigid-direct}, $X$ and $Y$ are $\tau$-rigid and
\eqref{eq:cross-Hom} holds.  Let $Q_X$ be the direct sum of one representative
from each isomorphism class of indecomposable projective $A$-modules $Q$
satisfying $\Hom_A(Q,X)=0$, and
define $Q_Y$ symmetrically.  The $\tau$-rigid-pair inequalities give
\[
 |X|+|Q_X|\leq|A|,
 \qquad
 |Y|+|Q_Y|\leq|B|.
\]
Here we use the standard bound for $\tau$-rigid pairs from
Proposition~\ref{prop:AIR-criteria}\textup{(2)}.

Let $P_Z$ be the projective complement of $Z$.  By
Lemma~\ref{lem:radical-context}, every indecomposable summand of $P_Z$ is
induced from one corner.  If $F_AQ$ is such a summand, then adjunction gives
\[
 0=\Hom_\Lambda(F_AQ,Z)
  \cong\Hom_A\bigl(Q,X\oplus(Y\otimes_BM)\bigr),
\]
so $Q$ is a summand of $Q_X$; the $B$-side is symmetric.  Hence
\[
 P_Z\in\addc(F_AQ_X\oplus F_BQ_Y).
\]
Using \eqref{eq:summand-counts} and the preceding inequalities, we obtain
\[
 |A|+|B|=|\Lambda|=|Z|+|P_Z|
 \leq |X|+|Q_X|+|Y|+|Q_Y|
 \leq |A|+|B|.
\]
All inequalities are equalities.  Thus $(X,Q_X)$ and $(Y,Q_Y)$ are support
$\tau$-tilting pairs.  Every indecomposable summand of $P_Z$ occurs in
$F_AQ_X\oplus F_BQ_Y$, and the two basic modules have the same number of
summands; hence $P_Z\cong F_AQ_X\oplus F_BQ_Y$.
Orthogonality of $P_Z$ to $Z$ gives
\[
 \Hom_A(Q_X,Y\otimes_BM)=0,
 \qquad
 \Hom_B(Q_Y,X\otimes_AN)=0.
\]
Together with \eqref{eq:cross-Hom} and
\eqref{eq:Fac-orthogonality}, this proves the two cross-component
membership conditions.  Moreover, $Q_X=P_X$ and $Q_Y=P_Y$, so the same
argument gives
\[
 P_Z\cong F_AP_X\oplus F_BP_Y,
\]
which proves the assertion about the projective complement.

Conversely, suppose that $X\in\stau A$, $Y\in\stau B$, and
\[
 Y\otimes_BM\in\Fac_AX,
 \qquad
 X\otimes_AN\in\Fac_BY.
\]
Corollary~\ref{cor:direct-sufficient} then implies that $\mathbb I(X,Y)$
is support $\tau$-tilting and
\[
 \Fac_\Lambda\mathbb I(X,Y)=\mathcal T_{X,Y}.
\]
This proves the converse; neither conclusion uses \textup{(R)}.
\end{proof}

The radical hypothesis cannot be omitted from this converse.  For the
strict context $\Lambda=M_2(k)$ with $A=B=M=N=k$, take $X=k$ and $Y=0$.
Here $A$ and $B$ are basic, but \textup{(R)} fails and $\Lambda$ is not
basic.  Although the diagonal idempotents $e$ and $f$ are primitive and
orthogonal, the projectives $e\Lambda$ and $f\Lambda$ are isomorphic; hence
they represent only one isomorphism class of indecomposable projectives.
Equivalently, $M_2(k)$ has only one isomorphism class of simple modules.
Now $F_AX=e\Lambda$ is the unique indecomposable projective up to
isomorphism, so $\mathbb I(X,Y)$ is $\tau$-tilting.  However,
$X\otimes_AN=k\notin\Fac_B0$.  In this example the projective modules
induced from the two corners are isomorphic, so the summand count fails.

\subsection{Consequences and specializations}

\begin{corollary}\label{cor:triangular-direct}
Let
\[
 \Lambda=\begin{pmatrix}A&0\\M&B\end{pmatrix},
\]
and let $(X,P_X)$ and $(Y,P_Y)$ be support $\tau$-tilting pairs over $A$
and $B$.  Then
\[
 (X,0)\oplus(Y\otimes_BM,Y)
\]
is support $\tau$-tilting over $\Lambda$ if and only if
\[
 Y\otimes_BM\in\Fac_AX.
\]
By \eqref{eq:Fac-orthogonality}, this is equivalent to
\[
 \Hom_A(Y\otimes_BM,\tau_AX)=0,
 \qquad
 \Hom_A(P_X,Y\otimes_BM)=0.
\]
\end{corollary}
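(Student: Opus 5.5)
The natural route is to specialize Theorem~\ref{thm:direct-characterization} to $N=0$. For the triangular algebra one has $\phi=0=\psi$, so condition \textup{(R)} holds trivially. Moreover, $F_AX=(X,X\otimes_A0)=(X,0)$ and $F_BY=(Y\otimes_BM,Y)$. Hence $\mathbb I(X,Y)=(X,0)\oplus(Y\otimes_BM,Y)$, and the module in the statement is exactly this directly induced module.

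Since $X\in\stau A$ and $Y\in\stau B$ are given, condition \textup{(2)} of Theorem~\ref{thm:direct-characterization} reduces to the two cross conditions. The second of these, $X\otimes_AN=0\in\Fac_BY$, holds automatically. Therefore the theorem says that $\mathbb I(X,Y)$ is support $\tau$-tilting if and only if $Y\otimes_BM\in\Fac_AX$, which is the first equivalence.

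For the "if" direction one does not even need \textup{(R)}: Corollary~\ref{cor:direct-sufficient} already applies. The "only if" direction is the part of Theorem~\ref{thm:direct-characterization} that uses the summand count. In that count one should check that it behaves well when $N=0$. Lemma~\ref{lem:radical-context}\textup{(2)} gives $|\Lambda|=|A|+|B|$, and the indecomposable projectives are the induced ones, so nothing special happens.

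The final reformulation follows from formula~\eqref{eq:Fac-orthogonality} in Theorem~\ref{thm:AIR-torsion}\textup{(3)}, applied to the support $\tau$-tilting pair $(X,P_X)$ and the module $L=Y\otimes_BM$. It gives $Y\otimes_BM\in\Fac_AX$ if and only if $\Hom_A(Y\otimes_BM,\tau_AX)=0=\Hom_A(P_X,Y\otimes_BM)$.

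I expect no real obstacle here. The only points to verify carefully are the identifications $F_AX=(X,0)$ and $F_BY=(Y\otimes_BM,Y)$ using the explicit formulas in Section~2.1, and the fact that basic-representative conventions do not affect the statement.
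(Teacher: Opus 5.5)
Your proposal is correct and follows the paper's own proof. Like the paper, you set $N=0$, observe that condition \textup{(R)} and $X\otimes_AN=0\in\Fac_BY$ hold automatically, and read the equivalence off Theorem~\ref{thm:direct-characterization}; you then obtain the Hom-vanishing reformulation from \eqref{eq:Fac-orthogonality} applied to the pair $(X,P_X)$.
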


\begin{proof}
Here $N=0$, so $X\otimes_AN=0\in\Fac_BY$; hence
Theorem~\ref{thm:direct-characterization} leaves exactly the single condition
$Y\otimes_BM\in\Fac_AX$.  Formula~\eqref{eq:Fac-orthogonality} gives its two
equivalent Hom-vanishing conditions.  The radical condition is automatic,
and the argument is valid over an arbitrary field.  Over an algebraically
closed field, the Hom-vanishing formulation is the criterion of Gao and
Huang \cite[Theorem~4.3]{GaoHuang}.  Thus the present result removes their
assumption that the ground field be algebraically closed.
\end{proof}

The next result gives a condition-free construction when the corner algebras
are $\tau$-tilting finite.  It necessarily allows the corner torsion classes
to grow.  For a class $\mathcal S\subseteq\modu C$, write
$\operatorname{Tors}_C(\mathcal S)$ for the smallest torsion class containing
$\mathcal S$.

\begin{proposition}[Least compatible enlargement]
\label{prop:finite-closure}
Assume that $A$ and $B$ are $\tau$-tilting finite.  Starting with
$\mathcal X_0=\mathcal X$ and $\mathcal Y_0=\mathcal Y$, define
\begin{align*}
 \mathcal X_{i+1}
 &=\operatorname{Tors}_A\bigl(
     \mathcal X_i\cup
     \{V\otimes_BM\mid V\in\mathcal Y_i\}\bigr),\\
 \mathcal Y_{i+1}
 &=\operatorname{Tors}_B\bigl(
     \mathcal Y_i\cup
     \{U\otimes_AN\mid U\in\mathcal X_i\}\bigr).
\end{align*}
The two ascending sequences stabilize.  Their stable values
$(\widehat{\mathcal X},\widehat{\mathcal Y})$ form the least pair of torsion
classes containing $(\mathcal X,\mathcal Y)$ and satisfying
\[
 \{V\otimes_BM\mid V\in\widehat{\mathcal Y}\}
 \subseteq\widehat{\mathcal X},
 \qquad
 \{U\otimes_AN\mid U\in\widehat{\mathcal X}\}
 \subseteq\widehat{\mathcal Y}.
\]
If $\widehat X$ and $\widehat Y$ are the support $\tau$-tilting
generators of these stable torsion classes, then
\[
 F_A\widehat X\oplus F_B\widehat Y
\]
is a support $\tau$-tilting $\Lambda$-module and its torsion class is
$\mathcal T_{\widehat X,\widehat Y}$.  No condition on $\phi$ or $\psi$ is
required.
\end{proposition}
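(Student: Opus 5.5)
We prove the statement in three steps: stabilization, leastness, and the application of Corollary~\ref{cor:direct-sufficient}.

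\emph{Stabilization.} By induction on $i$, each $\mathcal X_i$ and $\mathcal Y_i$ is a torsion class containing its predecessor, since $\operatorname{Tors}$ is monotone and contains its argument. The sequences are therefore ascending. The key point is functorial finiteness. Since $A$ and $B$ are $\tau$-tilting finite, every torsion class over them is functorially finite; this is the standard consequence of $\stau$ finiteness, \cite{DIJ}. By Theorem~\ref{thm:AIR-torsion}\textup{(2)}, the torsion classes of $A$ therefore correspond bijectively to the finite set $\stau A$, and the same holds for $B$. Hence $\mathrm{tors}\,A$ and $\mathrm{tors}\,B$ are finite posets, and an ascending chain in a finite poset stabilizes. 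Choose $i_0$ with $\mathcal X_{i_0}=\mathcal X_{i_0+1}$ and $\mathcal Y_{i_0}=\mathcal Y_{i_0+1}$; both chains are eventually constant, so we can take $i_0$ large enough for both. At this stage the defining formulas give the two containments $V\otimes_BM\in\widehat{\mathcal X}$ for $V\in\widehat{\mathcal Y}$, and $U\otimes_AN\in\widehat{\mathcal Y}$ for $U\in\widehat{\mathcal X}$.

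\emph{Leastness.} Let $(\mathcal X',\mathcal Y')$ be any pair of torsion classes containing $(\mathcal X,\mathcal Y)$ and satisfying both tensor containments. We show by induction that $\mathcal X_i\subseteq\mathcal X'$ and $\mathcal Y_i\subseteq\mathcal Y'$. The case $i=0$ is the hypothesis. For the inductive step, the generating set of $\mathcal X_{i+1}$ lies in $\mathcal X_i\cup\{V\otimes_BM\mid V\in\mathcal Y'\}\subseteq\mathcal X'$. Since $\mathcal X'$ is a torsion class, $\operatorname{Tors}_A$ of that set stays inside $\mathcal X'$. The argument for $\mathcal Y_{i+1}$ is symmetric.

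\emph{Construction.} Let $\widehat X$ and $\widehat Y$ be the support $\tau$-tilting generators $P(\widehat{\mathcal X})$ and $P(\widehat{\mathcal Y})$, so that $\Fac_A\widehat X=\widehat{\mathcal X}$ and $\Fac_B\widehat Y=\widehat{\mathcal Y}$ by Theorem~\ref{thm:AIR-torsion}\textup{(2)}. Because $\widehat Y\in\widehat{\mathcal Y}$, the containments give $\widehat Y\otimes_BM\in\Fac_A\widehat X$; likewise $\widehat X\otimes_AN\in\Fac_B\widehat Y$. Corollary~\ref{cor:direct-sufficient} then applies directly, with no hypothesis on $\phi$ or $\psi$. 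It shows that $F_A\widehat X\oplus F_B\widehat Y$ is support $\tau$-tilting with $\Fac_\Lambda=\mathcal T_{\widehat X,\widehat Y}$.

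The main obstacle is the finiteness step. One must make sure that every torsion class over a $\tau$-tilting finite algebra is functorially finite, so that the $\operatorname{Tors}$ closures land back in the finite set parametrized by $\stau$. If this is not cited, I would argue instead as follows. Each $\operatorname{Tors}_A$ of a set is the filtration closure of the factor modules, and finiteness of $\stau A$ gives finiteness of all torsion classes, \cite{DIJ}. Everything else is formal.
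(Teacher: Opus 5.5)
Your proof is correct and follows the paper's argument essentially step for step: stabilization comes from the finiteness of the set of torsion classes over a $\tau$-tilting finite algebra (via \cite[Theorem~2.7]{AIR} and \cite[Theorem~3.8]{DIJ}), leastness is proved by induction on $i$, and Corollary~\ref{cor:direct-sufficient} is then applied to $\widehat X=P(\widehat{\mathcal X})$ and $\widehat Y=P(\widehat{\mathcal Y})$. The fallback in your final paragraph is unnecessary, since the \cite{DIJ} result you already cite supplies exactly the finiteness the stabilization step needs.
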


\begin{proof}
By construction, each $\mathcal X_i$ and $\mathcal Y_i$ is a torsion class,
and
\[
 \mathcal X_i\subseteq\mathcal X_{i+1},
 \qquad
 \mathcal Y_i\subseteq\mathcal Y_{i+1}.
\]
By \cite[Theorem~2.7]{AIR} and \cite[Theorem~3.8]{DIJ}, a $\tau$-tilting
finite algebra has only finitely many torsion classes, all of which are
functorially finite.  Hence there is an $r\geq0$ such that
\[
 \mathcal X_r=\mathcal X_{r+1},
 \qquad
 \mathcal Y_r=\mathcal Y_{r+1}.
\]
Set $\widehat{\mathcal X}=\mathcal X_r$ and
$\widehat{\mathcal Y}=\mathcal Y_r$.  The defining equalities at this stable
stage give
\[
 \{V\otimes_BM\mid V\in\widehat{\mathcal Y}\}
 \subseteq\widehat{\mathcal X},
 \qquad
 \{U\otimes_AN\mid U\in\widehat{\mathcal X}\}
 \subseteq\widehat{\mathcal Y}.
\]

To prove minimality, let $(\mathcal U,\mathcal V)$ be any pair of torsion
classes containing $(\mathcal X,\mathcal Y)$ and satisfying
\[
 \{W\otimes_BM\mid W\in\mathcal V\}\subseteq\mathcal U,
 \qquad
 \{L\otimes_AN\mid L\in\mathcal U\}\subseteq\mathcal V.
\]
We prove by induction that
$\mathcal X_i\subseteq\mathcal U$ and
$\mathcal Y_i\subseteq\mathcal V$ for every $i$.  This is clear for $i=0$.
If it holds for $i$, then
\[
 \mathcal X_i\cup
 \{W\otimes_BM\mid W\in\mathcal Y_i\}
 \subseteq\mathcal U.
\]
Since $\mathcal U$ is a torsion class, the definition of $\mathcal X_{i+1}$
gives $\mathcal X_{i+1}\subseteq\mathcal U$.  The same argument gives
$\mathcal Y_{i+1}\subseteq\mathcal V$.  Thus
\[
 \widehat{\mathcal X}\subseteq\mathcal U,
 \qquad
 \widehat{\mathcal Y}\subseteq\mathcal V,
\]
which proves minimality.

Finally, the stable classes are functorially finite, so let $\widehat X$ and
$\widehat Y$ be their support $\tau$-tilting generators.  Thus
\[
 \widehat{\mathcal X}=\Fac_A\widehat X,
 \qquad
 \widehat{\mathcal Y}=\Fac_B\widehat Y.
\]
The compatibility inclusions yield
\[
 \widehat Y\otimes_BM\in\Fac_A\widehat X,
 \qquad
 \widehat X\otimes_AN\in\Fac_B\widehat Y.
\]
Corollary~\ref{cor:direct-sufficient} therefore shows that
$F_A\widehat X\oplus F_B\widehat Y$ is support $\tau$-tilting and that its
torsion class is $\mathcal T_{\widehat X,\widehat Y}$.  Since that corollary
imposes no condition on $\phi$ or $\psi$, neither does this conclusion.
\end{proof}

\section*{Acknowledgements}
The author gratefully acknowledges the assistance of ChatGPT in the preparation
and revision of this manuscript.  This work was supported by the National
Natural Science Foundation of China (Grant Nos.~12201211 and 126701258).

\enlargethispage{2\baselineskip}
\end{document}